\def\i{\sqrt{-1}}
\def\ddbar{\sqrt{-1}\partial\bar\partial}
\def\tr{\mathrm{tr}}
\newcommand{\Vol}{\operatorname{Vol}}
\newtheorem{remark}{Remark}[section]
\newtheorem{lemma}{Lemma}[section]
\newtheorem{definition}{Definition}[section]
\newtheorem{proposition}{Proposition}[section]
\newtheorem{corollary}{Corollary}[section]
\newtheorem{theorem}{Theorem}
\newtheorem{conjecture}{Conjecture}
\newtheorem{theorem-others}{Theorem}[section]
\begin{document}
\title{About the Calabi problem: a finite dimensional approach}
\author{H. -D. Cao and J. Keller}
\maketitle
\begin{abstract}
{Let us consider a projective manifold $M^n$ and a smooth volume form $\Omega$ on $M$. We define the gradient flow associated to the problem of $\Omega$-balanced metrics in the quantum formalism, the $\Omega$-balancing flow. At the limit of the quantization, we prove that (see Theorem 1) the $\Omega$-balancing flow converges towards  a natural flow in K\"ahler geometry, the $\Omega$-K\"ahler flow. We also prove the long time existence of the $\Omega$-K\"ahler flow and its convergence towards Yau's solution to the Calabi conjecture of prescribing the 
volume form in a given K\"ahler class (see Theorem 2). We derive some natural geometric consequences of our study.}
\end{abstract}
\section{The $\Omega$-balancing flow\label{sect1}}
In this first section we give some definitions and recall some natural moment map considerations about $\Omega$-balanced metrics. Then we introduce the two main flows of this paper,
 the $\Omega$-balancing flow and  the $\Omega$-K\"ahler flow, and state our main 
results, Theorem 1 and Theorem 2.

Assume that $M$ is a smooth polarized manifold of complex dimension $n$ and $L$ an ample line bundle. We consider  a smooth volume form $\Omega$ on $M$ such that $\int_M \Omega = \Vol_L(M):=c_1(L)^n$, the volume of $M$ with respect to $L$.

 In \cite{D3}, S.K. Donaldson introduced a notion of $\Omega$-balanced metric, adapted to the Calabi problem of fixing the volume of a K\"ahler metric in a given K\"ahler class. These metrics are algebraic metrics coming from Kodaira's embedding of the manifold in $\mathbb{P}H^0(L^k)^\vee$ for $k$ sufficiently large. More precisely, given a (smooth) hermitian metric $h\in Met(L^k)$, one can consider the 
Hilbertian map 
$$Hilb_{\Omega}=Hilb_{k,\Omega} : Met(L^k) \rightarrow Met(H^0(L^k)) $$
such that $$Hilb_{\Omega}(h)=\int_M h(.,.)\, \Omega$$ is the $L^2$ metric induced by the fibrewise $h$  and the volume form $\Omega$. 
On another hand, one can consider the Fubini-Study applications $$FS=FS_k :  Met(H^0(L^k)) \rightarrow Met(L^k) $$
such that for $H\in Met(H^0(L^ k) )$, $\{S_i\}$ an $H$-orthonormal basis of $H^0(L^k)$ and for all $p\in M$,
$$\sum_{i=1}^{\dim H^0(L^k) }|S_i(p)|^2_{FS(H)}= \frac{\dim H^0(L^k)}{\Vol_L(M)} ,$$
thus fixing pointwise the metric $FS(H)\in Met(L^k)$. One of the main result of 
\cite{D3} asserts that the dynamical system $$T_k= FS\circ Hilb_{\Omega}$$ has a unique attractive fixed point.
\begin{definition}\label{def-prop}
Let $(M,L)$ be a polarized manifold, $\Omega$ a smooth volume form. Then for any sufficiently large $k$,
there exists a unique fixed point $h_k$ of the map $T_k: Met(L^k) \rightarrow Met(L^k)$ which is called $\Omega$-balanced.
The metric $Hilb_{\Omega}(h_k)\in Met( H^0(L^k))$ and the K\"ahler form $c_1(h_k)\in 2\pi c_1(L)$, given by the curvature of $h_k$, will also be called $\Omega$-balanced.
\end{definition}

 When $k$ tends to infinity, one obtains from \cite{D3} and \cite[Theorem 3]{Ke1},  the following result.
 \begin{theorem-others}\label{DKW}
 When $k\rightarrow \infty$, the sequence of normalized $\Omega$-balanced metrics $(h_k)^{1/k}\in Met(L)$ converges
 to a hermitian metric $h_\infty$ in smooth topology and its curvature is a solution to the Calabi problem of prescribing the volume form\footnote{Note that in the rest of the paper we shall forget the normalization factor $\frac{1}{n!}$ in front of the Monge-Amp\`ere mass $c_1(h_\infty)^n$.} in a given K\"ahler class, $$c_1(h_\infty)^n = \Omega.$$ 
 \end{theorem-others}

Let us denote in the sequel $N+1=N_k+1=\dim H^0(L^k)$. Another way of presenting the notion of $\Omega$-balanced metric is to introduce a moment map setting. Let us consider first 
$\mu : \mathbb{CP}^{N} \rightarrow i \mathfrak{u}(N+1)$ which is a moment map for the $U(N+1)$ action and the Fubini-Study metric $\omega_{FS}$ on $\mathbb{CP}^{N}$. Note that here we identify
implicitly the Lie algebra $\mathfrak{u}(N+1)$ with its dual using the bilinear form $(A,B)=-\tr(AB)$. Given homogeneous unitary coordinates, one sets explicitly  $\mu=(\mu)_{\alpha,\beta}$ as 
\begin{equation}
\left(\mu([z_0,...,z_N])\right)_{\alpha,\beta}=\frac{z_\alpha z_\beta}{\sum_i |z_i|^2}. \label{mu}
\end{equation}
Then,  given an holomorphic embedding $\iota:M\hookrightarrow  \mathbb{P}H^0(L^k)^\vee$, we can consider the integral of $\mu$ over  $M$ with respect to the volume form: 
$$\mu_\Omega(\iota)=\int_M \mu(\iota(p)) \Omega(p) $$
which induces a moment map for the $U(N+1)$ action over the space of all bases of $H^0(L^k)$. Let us give some details on that point. On the space $\mathfrak{M}$ of smooth maps from $M$ to $ \mathbb{P}H^0(L^k)^\vee$, we have a natural symplectic structure $\varpi$ defined by
$$\varpi(a,b)=\int_M (a,b) \Omega.$$
for $a,b\in T_{\iota}\mathfrak{M}$ and $(.,.)$ the Fubini-Study inner product induced on the tangent vectors.
Let $\zeta \in \mathfrak{u}(N+1)$ and $X_\zeta\in H^0( {\mathbb{P}^N}^\vee, {T \mathbb{P}^N}^\vee)$ be the induced holomorphic vector field on $  {\mathbb{P}^N}^\vee=\mathbb{P}H^0(L^k)^\vee$. For all $Y\in \Gamma(M,T{\mathbb{P}^N}^\vee_{|M})$ we have that 
\begin{eqnarray*}
\varpi({X_\zeta}_{|M},Y)&=&\int_M i_Y (i_{X_\zeta} \omega_{FS}) \Omega \\
&=& -\int_M \tr(d\mu(Y)\cdot \zeta)\Omega\\
&=& -\tr(  d\mu_{\Omega}(Y) \cdot \zeta)\\
&=&(  d\mu_{\Omega}(Y) , \zeta),
\end{eqnarray*}
and $\mu_{\Omega}$ is $Ad$-equivariant as the integral of the $Ad$-equivariant moment map $\mu$.
Thus, $U(N+1)$ acts isometrically on $\mathfrak{M}$ with the moment map given by 
$$\iota\mapsto-\sqrt{-1}\left(\mu_\Omega(\iota) - \frac{\tr(\mu_\Omega(\iota))}{N+1}Id_{N+1}\right)\in i\mathfrak{su}(N+1).$$
Note that if one defines a hermitian metric $H$ on $H^0(L^k)$, one can consider an orthonormal basis with respect to $H$ and the associated embedding,  and thus it also makes sense to speak of $\mu_\Omega(H)$. As we shall see, in the Bergman space $\mathcal{B}=\mathcal{B}_k=GL(N+1)/U(N+1)$, we have a preferred metric associated to the volume form $\Omega$ and the moment map we have just defined, and this is precisely an $\Omega$-balanced metric.

\begin{definition}
The embedding $\iota$ is $\Omega$-balanced if and only if $$\mu^0_\Omega(\iota):=\mu_\Omega(\iota) - \frac{\tr(\mu_\Omega(\iota))}{N+1}Id_{N+1}=0.$$
\end{definition}
An $\Omega$-balanced  embedding corresponds (up to $SU(N+1)$-isomorphisms) to an $\Omega$-balanced metric $\iota^*\omega_{FS}$ by pull-back of the Fubini-Study metric from $\mathbb{P}H^0(L^k)^\vee$, so our two definitions actually coincide (see \cite{D3}). Note that for $H\in Met(H^0(L^k))$,  it also makes sense to consider $\mu_\Omega(h)$ where $h=FS(H)\in Met(L^k)$, i.e when $h$ belongs to the space of  \textit{Bergman} type fibrewise metric that we identify with $\mathcal{B}$.

\bigskip 

\par On the other hand, seen as a hermitian matrix, $\mu^0_\Omega(\iota)$ induces a vector field on $\mathbb{CP}^{N}$. Thus, inspired from \cite{Fi1}, we study the following flow
\begin{equation*}
\frac{d \iota(t)}{dt} =  -  \mu^0_\Omega(\iota(t)),  
\end{equation*}
and we call this flow the $\Omega$-balancing flow. To fix the starting point of this flow, we choose a K\"ahler metric
$\omega=\omega(0)$ and we construct a sequence of hermitian metrics $h_k(0)$ such that $\omega_k(0):=c_1(h_k(0))$ converges
smoothly to $\omega(0)$ providing a sequence of embeddings $\iota_k(0)$ for $k>>0$. Such a sequence of embeddings is known to exist thanks to Theorem \ref{tian}. For technical reasons, we decide to rescale this flow by 
considering the following ODE.
\begin{equation}
\frac{d \iota_k(t)}{dt} =  - k \mu^0_\Omega(\iota_k(t))  \label{resbalflow}
\end{equation}
that we call the rescaled $\Omega$-balancing flow. Of course, we are interested in the behavior of the sequence of K\"ahler metrics 
$\omega_{k}(t)=\frac{1}{k}\iota_k(t)^*(\omega_{FS})$ when $t$ and $k$ tends to infinity. Here is one of the main results of this paper.

\begin{theorem}\label{mainthm}
For any fixed $t$, the sequence $\omega_{k}(t)$ converges in $C^\infty$ topology to the solution $\omega+ \ddbar \phi_t$ of the following Monge-Amp\`ere equation 
\begin{equation}
\frac{\partial\phi_t}{\partial t} = 1-\frac{\Omega}{(\omega+ \ddbar \phi_t)^n}\label{MA}
\end{equation}
with $\phi_0=0$ and $\omega= \lim_{k\rightarrow \infty}\omega_{k}(0)$. Furthermore, the convergence is $C^1$ in the variable $t$. 
\end{theorem}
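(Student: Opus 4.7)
My plan is to convert the embedding-level ODE~\eqref{resbalflow} into an evolution equation for K\"ahler potentials on $M$, and then identify its large-$k$ limit as the Monge--Amp\`ere flow~\eqref{MA} via the Bergman kernel asymptotic expansion.

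\textbf{Potential-level flow.} Since $\mu^0_\Omega(\iota_k)\in i\mathfrak{su}(N+1)$ is Hermitian, the vector field it generates on $\CP^N$ is the $\omega_{FS}$-gradient of the Hamiltonian $\tr(\mu\cdot\mu^0_\Omega(\iota_k))$. The standard variation formula for such gradient flows on a K\"ahler manifold yields $\partial_t(\iota_k^*\omega_{FS})$ as $\ddbar$ of an explicit function on $M$ coming from the pulled-back Hamiltonian. Dividing by $k$, one writes $\omega_k(t)=\omega_k(0)+\ddbar\phi_k(t)$ with $\phi_k(t)$ satisfying a first-order ODE depending on the Bergman data of the embedding.

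\textbf{Bergman asymptotics.} Choosing an $H_k(t)$-orthonormal basis $\{S_\alpha\}$, the $FS$-condition $\sum|S_\alpha|^2_{h_k}=(N+1)/\Vol_L$ gives $\iota_k^*\mu_{\alpha\beta}=\Vol_L(N+1)^{-1}\langle S_\alpha,S_\beta\rangle_{h_k}$, and with $G_{\alpha\beta}=\int_M\langle S_\alpha,S_\beta\rangle_{h_k}\Omega$ a direct computation reduces the pulled-back Hamiltonian to a multiple of
$$\int_M|K_k(x,y)|_{h_k\otimes h_k}^{2}\,\Omega(y)\;-\;\text{const},\qquad K_k(x,y)=\sum_\alpha S_\alpha(x)\overline{S_\alpha(y)}.$$
The Tian--Yau--Zelditch--Catlin--Lu expansion for the Bergman kernel with weight $\Omega$ gives concentration of $|K_k|^2$ on the diagonal, with leading order matching $\Omega(x)/\omega_k^n(x)$. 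After tracking the normalization constants carefully, this produces in the limit the right-hand side of~\eqref{MA}:
$$\partial_t\phi_k(t)\;\longrightarrow\;1-\frac{\Omega}{\omega_t^n}$$
in $C^\infty$ as $k\to\infty$, uniformly in $t$ on compact intervals, provided $\omega_k(t)\to\omega_t$ smoothly.

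\textbf{Passage to the limit and main obstacle.} The limit equation~\eqref{MA} is parabolic Monge--Amp\`ere and admits a unique smooth long-time solution (this is the content of Theorem~2). Combining (i) $C^\infty$-convergence of initial data $\omega_k(0)\to\omega(0)$ from Theorem~\ref{tian}, (ii) uniform-in-$k$ a priori $C^\infty$ estimates on $\omega_k(t)$ over a fixed time interval, and (iii) uniqueness of the limiting MA flow, I would conclude $C^\infty$-convergence $\omega_k(t)\to\omega_t$ at each fixed $t$; $C^1$-convergence in the $t$-variable then follows from uniform control on $\partial_t\omega_k$ directly from the flow equation. I expect the main technical difficulty to be establishing the uniform $k$-independent parabolic estimates on $\omega_k(t)$ needed in (ii), so that the Bergman asymptotic can be applied in $C^\infty$ and uniformly along the flow; a related subtlety is that the Hilbert metric $H_k(t)$ defining the embedding need not coincide with $Hilb_{\omega_k^n(t)}(h_k(t))$, so one must control the discrepancy between these two Hilbert metrics along the flow to correctly identify the limit.
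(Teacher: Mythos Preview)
Your first two steps---rewriting the embedding flow at the potential level and using the Bergman asymptotics to identify the limiting right-hand side as $1-\Omega/\omega_t^n$---are correct and match exactly what the paper does in Section~\ref{study} (Proposition~\ref{conv0} and Theorem~\ref{identify}). But in the paper this is used only conditionally: Theorem~\ref{identify} says that \emph{if} $\omega_k(t)$ converges smoothly (and $C^1$ in $t$), then the limit solves~\eqref{MA}. It does not prove convergence.

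The gap in your proposal is precisely where you flag it: step~(ii), the uniform $k$-independent a~priori estimates on $\omega_k(t)$. You do not indicate how to obtain them, and there is no evident mechanism. The rescaled balancing flow is an ODE on the finite-dimensional space $\mathcal{B}_k$, not a parabolic PDE on $M$; there is no maximum principle, and the natural energy (the moment-map norm) controls only the $\mathrm{dist}_k$ metric on $\mathcal{B}_k$, which does not directly yield $C^r$ bounds independent of $k$. Without such bounds you cannot apply the Bergman expansion uniformly along the flow, and the argument stalls.

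The paper's actual proof (Section~\ref{proof}) avoids this entirely by reversing the logic. It first uses Theorem~\ref{mainthm2} to produce the smooth solution $\omega_t$ of~\eqref{MA}, then builds from $\omega_t$ a sequence of \emph{algebraic} reference metrics $\overline{h}_k(t)=FS(Hilb_\Omega(h_0^k e^{k\psi(k,t)}))^{1/k}$, where $\psi(k,t)=\phi_t+\sum_{j=1}^m k^{-j}\eta_j(t)$ and the correctors $\eta_j$ solve explicit linear parabolic equations~\eqref{linear},~\eqref{linear2}. By construction these $\overline{h}_k(t)$ are $C^\infty$-close to $\omega_t$, and one shows (Theorem~\ref{highapprox}) that $\mathrm{dist}_k(h_k(t),\overline{h}_k(t))\le C k^{-m-1}$ for any $m$. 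The final step converts this Bergman-space distance into a $C^r$ estimate via the moment-map machinery of Sections~4.3--4.4 (Proposition~\ref{HA}, Lemma~\ref{l5}, Proposition~\ref{p1}), using $R$-bounded geometry of the reference path and taking $m$ large enough. So the uniform control you seek on $\omega_k(t)$ is obtained \emph{a~posteriori}, as a consequence of closeness to the known limiting flow, rather than as an input.
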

We call the flow given by Equation (\ref{MA}), the  $\Omega$-K\"ahler flow. 
The proof of this theorem will be done in several steps. First we study the limit of a convergent sequence of rescaled $\Omega$-balancing flows to identify the limit (Section \ref{study}), that we shall call the $\Omega$-K\"ahler flow.  Then, in Section \ref{Kahlerflow}, we study in details the behavior of the $\Omega$-K\"ahler flow in any K\"ahler class and prove our second main result. 

\begin{theorem}\label{mainthm2}
Let $\phi_t$ be the solution to Eq. (\ref{MA})
on the maximal time interval $0\leq t < T_{max}$. Let $$v_t=\phi_t - \frac{1}{\Vol_L(M)}\int_M \phi_t \frac{\omega^n}{n!}.$$ Then the $C^\infty$ norm of $v_t$ are uniformly bounded for all $0\leq t < T_{max}$ and consequently $T_{max}=+\infty$. 
Moreover, $v_t$ converges when $t\rightarrow \infty$ to $v_\infty$ in smooth topology and $\frac{\partial \phi_t }{\partial t}$ converges to a constant in smooth topology. 
\end{theorem}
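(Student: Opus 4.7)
The plan is to prove Theorem~\ref{mainthm2} via the standard cascade of parabolic a priori estimates for Monge--Amp\`ere flows, combining Yau--Aubin type elliptic estimates at each fixed time with parabolic maximum-principle arguments in the spirit of Cao's work on the K\"ahler--Ricci flow. The first step is a bound on $\dot\phi_t$. Writing $\omega_t=\omega+\ddbar\phi_t$ and differentiating Equation~(\ref{MA}) in $t$ gives
$$
\ddot\phi_t = (1-\dot\phi_t)\,\Delta_{\omega_t}\dot\phi_t,
$$
and since $1-\dot\phi_t=\Omega/\omega_t^n>0$ the parabolic maximum principle shows that $\max_M\dot\phi_t$ is non-increasing and $\min_M\dot\phi_t$ non-decreasing. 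Hence $\|\dot\phi_t\|_{C^0}\leq \|\dot\phi_0\|_{C^0}$ and $0<c_0\leq 1-\dot\phi_t\leq C_0$ uniformly on $[0,T_{max})$.

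Next come the elliptic estimates on $v_t$. At each time $t$, $v_t$ satisfies $(\omega+\ddbar v_t)^n = f_t\,\omega^n$ with $f_t:=\Omega/[(1-\dot\phi_t)\omega^n]$ uniformly bounded above and below by positive constants and $\int_M f_t\omega^n = \Vol_L(M)$. Yau's $L^\infty$ estimate then gives $\operatorname{osc}_M v_t\leq C$, and the zero-mean normalization $\int_M v_t\,\omega^n/n!=0$ upgrades this to $\|v_t\|_{C^0}\leq C$. For the Laplacian bound I would apply the parabolic Aubin--Yau trick to $G:=\log \operatorname{tr}_\omega\omega_t - A v_t$ for $A$ large, computing $(\partial_t-\Delta_{\omega_t})G$ and absorbing error terms using the previous bounds to obtain $c\,\omega\leq\omega_t\leq C\,\omega$ uniformly in $t$. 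With this $C^2$ control the Monge--Amp\`ere equation becomes uniformly elliptic, so Evans--Krylov yields $\|v_t\|_{C^{2,\alpha}}\leq C$, and Schauder bootstrapping together with differentiation of the flow equation provides uniform $C^k$ bounds for every $k$. These preclude any finite-time breakdown of the flow, so $T_{max}=+\infty$.

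For the convergence as $t\to\infty$, the identity of the first step exhibits $\dot\phi_t$ as a solution of the linear uniformly parabolic equation $\partial_t u = L_t u$ with $L_t:=(1-\dot\phi_t)\Delta_{\omega_t}$, whose smooth coefficients are controlled uniformly in $t$ by the previous steps. The parabolic Harnack inequality therefore produces exponential decay $\operatorname{osc}_M\dot\phi_t\leq Ce^{-\delta t}$ for some $\delta>0$. Differentiating $\int_M\omega_t^n=\Vol_L(M)$ yields $\int_M\dot\phi_t\,\omega_t^n=0$, so the spatial mean of $\dot\phi_t$ against $\omega_t^n$ vanishes; coupled with the oscillation decay this forces $\dot\phi_t\to 0$ exponentially in $C^0$, and then in $C^\infty$ by interpolation with the uniform higher-order bounds. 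Integrating $\dot v_t=\dot\phi_t-\Vol_L(M)^{-1}\int_M\dot\phi_t\,\omega^n/n!$ in time, $v_t$ is Cauchy in $C^\infty$ and converges to a limit $v_\infty$, with $(\omega+\ddbar v_\infty)^n=\Omega$ obtained by passing to the limit in the flow. The main obstacle is the Laplacian estimate: unlike in the elliptic Calabi--Yau setting where $\log f$ is fixed, here $\log f_t$ depends on the unknown $\dot\phi_t$ whose second derivatives are not a priori controlled, so the computation of $(\partial_t-\Delta_{\omega_t})\log\operatorname{tr}_\omega\omega_t$ produces cross terms in $\nabla\dot\phi_t$ that must be absorbed via Schwarz-type inequalities and a judicious choice of $A$; a secondary difficulty is that, lacking an obvious monotone Lyapunov functional for this flow, the convergence in Step~5 rests squarely on parabolic Harnack and hence on the full strength of the preceding uniform estimates.
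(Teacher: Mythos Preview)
Your proposal is correct and follows essentially the same route as the paper: a $C^0$ bound on $\dot\phi_t$ by maximum principle, a parabolic Aubin--Yau second-order estimate, a $C^0$ bound on $v_t$ (the paper does this via Nash--Moser iteration, which is exactly the mechanism behind the Yau $L^\infty$ estimate you invoke), higher regularity (the paper uses Calabi's third-order estimate rather than Evans--Krylov, an immaterial difference), and then Li--Yau/parabolic Harnack applied to $\dot\phi_t$ for convergence.

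One point worth noting: the difficulty you flag in the Laplacian step---cross terms in $\nabla\dot\phi_t$ arising because $\log f_t$ depends on $\dot\phi_t$---is largely an artifact of your choice of parabolic operator. The paper applies not $\partial_t-\Delta_{\omega_t}$ but rather $e^{-\hbar}\Delta_{\omega_t}-\partial_t$ with $\hbar=\log(\omega_t^n/\Omega)$, acting on $e^{-C\phi}(n+\Delta\phi)$. Since $\dot\phi=1-e^{-\hbar}$, one has $\partial_t(n+\Delta\phi)=-\Delta e^{-\hbar}=e^{-\hbar}(\Delta\hbar-|\nabla\hbar|^2)$, and the term $|\nabla\hbar|^2$ (which is essentially your feared $|\nabla\dot\phi|^2$) enters the estimate with a favorable sign and is simply dropped; no Schwarz-type absorption is required. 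Rewriting everything through $\hbar$ also eliminates any uncontrolled second derivatives of $\dot\phi_t$. So the ``main obstacle'' you identify is real for your formulation but dissolves with the paper's choice of heat operator, which is the natural one since $\dot\phi_t$ solves $(\partial_t-e^{-\hbar}\Delta_{\omega_t})\dot\phi_t=0$.
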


Finally, inspired from the work of \cite{D1} and especially \cite{Fi1} for the Calabi flow, we will prove Theorem \ref{mainthm} in Section  \ref{proof}. In Section 5, we give a moment map interpretation of the $\Omega$-K\"ahler flow  and draw some possible generalizations.

\bigskip
\vspace{0.2cm}
\noindent \thanks{{\bf Acknowledgments}. \ The research of the first author is partially supported by NSF grant DMS-0909581. He would like to thank Xiaofeng Sun for helpful conversations. The second author is very grateful to J. Fine for enlightening conversations about Section \ref{sympl}. He also wishes to thank deeply R. Berman, S. Boucksom, Z. Blocki, V. Guedj and P. Eyssidieux. Both authors are grateful to the referee whose insightful comments improved a lot the preliminary version.}

\section{The limit of the rescaled $\Omega$-balancing flow}\label{study}
In this section, we assume that the sequence $\omega_{k}(t) $ is convergent and we want to relate its limit to Equation (\ref{MA}). The goal of this section is to prove the following result.
\begin{theorem}\label{identify}
Suppose that for each $t\in \mathbb{R}_+$, the metric $\omega_k(t)$ induced by Equation (\ref{resbalflow}) converges in
smooth topology to a metric $\omega_t$ and that this convergence is $C^1$ in $t\in \mathbb{R}_+$. Then
the limit $\omega_t$ is a solution to the flow (\ref{MA}) starting at $\omega_0=\lim_{k\rightarrow \infty} \omega_k(0)$.
\end{theorem}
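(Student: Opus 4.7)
My plan is to translate the ODE on embeddings into a PDE for K\"ahler potentials on $M$ and identify the right-hand side as $k\to\infty$ via Bergman-kernel asymptotics.

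First, the vector field on $\mathbb{P}H^0(L^k)^\vee$ generated by the hermitian matrix $A:=-k\mu^0_\Omega(\iota_k(t))$ is real-holomorphic, with $\mathcal{L}_{X_A}\omega_{FS}=c_0\,\ddbar\,\tr(A\cdot\mu)$ for a universal normalisation constant $c_0$ coming from the expression $\omega_{FS}=c_0\ddbar\log\sum|z_i|^2$. Pulling back by $\iota_k(t)$ and dividing by $k$ --- which exactly cancels the factor of $k$ introduced in (\ref{resbalflow}) --- yields
$$\frac{d\omega_k(t)}{dt}=\ddbar f_k(t),\qquad f_k(t)(p)=c_0\Bigl(\tfrac{\Vol_L(M)}{N+1}-\tr\bigl(\mu_\Omega(\iota_k(t))\,\mu(\iota_k(t)(p))\bigr)\Bigr),$$
using $\mu^0_\Omega=\mu_\Omega-\tfrac{\Vol_L(M)}{N+1}\mathrm{Id}$ together with $\tr\mu(\iota(p))=1$. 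Choosing now the $H_k$-orthonormal basis $\{S_\alpha\}$ of $H^0(L^k)$ realising $\iota_k(t)$, with $h_k(t):=FS(H_k)$, one has $\mu(\iota(p))_{\alpha\beta}=\tfrac{\Vol_L(M)}{N+1}\langle S_\alpha(p),S_\beta(p)\rangle_{h_k(t)}$, and expanding the trace turns it into the Bergman-kernel integral
$$\tr\bigl(\mu_\Omega(\iota_k(t))\,\mu(\iota_k(t)(p))\bigr)=\Bigl(\tfrac{\Vol_L(M)}{N+1}\Bigr)^{\!2}\int_M|K_k(p,q)|^2_{h_k\otimes h_k}\,\Omega(q),$$
where $K_k(p,q):=\sum_\alpha S_\alpha(p)\otimes\overline{S_\alpha(q)}$ is the reproducing kernel of $(H^0(L^k),H_k)$.

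The key analytic input is then the Tian-Yau-Zelditch-Catlin expansion of the Bergman kernel. Because $\omega_k(t)\to\omega_t$ smoothly and the inner product $H_k$ is, to leading order in $k$, equivalent to $Hilb_{\omega_t^n/n!}(h_k(t))$, the off-diagonal Gaussian-type decay of $K_k$ on the $1/\sqrt{k}$-scale, combined with the exact diagonal value $|K_k(p,p)|_{h_k\otimes h_k}=(N+1)/\Vol_L(M)$, yields (after careful tracking of constants) the smooth convergence $f_k(t)(p)\to 1-\Omega(p)/\omega_t^n(p)$ in $C^\infty(M)$. The hypothesised $C^\infty$-in-$p$ and $C^1$-in-$t$ convergence then allows commuting $\lim_k$ with $\ddbar$ and $\partial_t$ in $\dot\omega_k(t)=\ddbar f_k(t)$, producing $\dot\omega_t=\ddbar(1-\Omega/\omega_t^n)$; writing $\omega_t=\omega_0+\ddbar\phi_t$ with $\phi_0=0$ recovers exactly (\ref{MA}). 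The main obstacle is precisely this asymptotic step: the classical TYZ-Catlin expansion is usually stated for the $L^2$-based Bergman kernel and for fixed smooth metrics, whereas here the natural inner product $H_k$ is characterised by $FS^{-1}$ and comes from a smoothly varying family $h_k(t)$, so one must verify that the off-diagonal decay and the identification of the leading term hold uniformly in $p$ and $t$ in a topology strong enough to differentiate through.
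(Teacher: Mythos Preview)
Your overall strategy matches the paper's: the potential you call $f_k$ is exactly $\beta_k=-\tr(\mu^0_\Omega\mu)$ of (\ref{potential}), and the paper likewise identifies its limit by Bergman asymptotics and then passes to the limit using the $C^1$-in-$t$ hypothesis together with a normalisation of the potential. The packaging differs: your Bergman-kernel integral $\bigl(\tfrac{\Vol_L(M)}{N+1}\bigr)^{2}\int_M|K_k(p,q)|^2\,\Omega(q)$ is, after stripping the on-diagonal factor, precisely the Berezin--Toeplitz operator $Q_k$ of Theorem \ref{quantlaplacien} applied to the function $\Omega/\omega^n$. Rather than rederiving off-diagonal decay by hand, the paper simply invokes Proposition \ref{tian-b} and Theorem \ref{quantlaplacien} as black boxes, combining them in Proposition \ref{conv0} to conclude $\beta_k\to 1-\Omega/\omega_t^n$.

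Where your sketch goes wrong is in resolving the obstacle you correctly flag. Your proposed fix --- that ``$H_k$ is, to leading order in $k$, equivalent to $Hilb_{\omega_t^n/n!}(h_k(t))$'' --- is neither justified nor the right comparison. All of the surrounding machinery (the moment map $\mu_\Omega$, Proposition \ref{tian-b}, Theorem \ref{quantlaplacien}) is built from the \emph{fixed} volume form $\Omega$, not from the moving $\omega_t^n$; and there is no general principle forcing the flowed inner product $H_k(t)$ to approximate any $Hilb$ of its own Fubini--Study metric. The paper instead stays with $Hilb_\Omega$ throughout: in Proposition \ref{conv0} one takes a sequence $h_k\in Met(L^k)$ with $\tfrac{1}{k}c_1(h_k)\to\omega$, uses $h_k$ itself as the fibrewise representative in the $\mu$-ratios (which are representative-independent), and computes with an $Hilb_\Omega(h_k)$-orthonormal basis so that Proposition \ref{tian-b} yields $\sum_i|s_i|^2_{h_k}\sim k^n\omega^n/\Omega$ and Theorem \ref{quantlaplacien} handles the remaining kernel integral. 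That is the route to take; trying to recast $H_k(t)$ as an $Hilb_{\omega_t^n}$ is a detour that cannot be closed without essentially reproving those asymptotic results for a different $L^2$ structure.
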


\medskip 

\par  Given a matrix $H$ in $Met(H^0(L^k))$, we obtain a vector field $X_H$ which induces a perturbation of any embedding $\iota: M\hookrightarrow \mathbb{P}H^0(L^k)^\vee$. The induced
infinitesimal change in $\iota^* \omega_{FS}$ is pointwisely given by the potential $\tr(H\mu)$ where $\mu$ is given by (\ref{mu}). Thus, the corresponding potential in the case of the rescaled $\Omega$-balancing flow is $$ -k \tr(\mu^0_\Omega \mu).$$
Since we are rescaling the flow in (\ref{resbalflow}) and considering forms in the class $2\pi c_1(L)$, we are lead to understand the asymptotic behavior when $k\rightarrow \infty$ of the potentials 
\begin{equation} \label{potential}
\beta_k=- \tr(\mu^0_\Omega \mu) 
\end{equation}
We will need the following key result. Let us fix a K\"ahler form $\omega\in 2\pi c_1(L)$ and write $\omega= c_1(h)$. Let us define a different Hilbertian map by considering $${Hilb}: Met(L^k) \rightarrow Met(H^0(L^k))$$ by fixing 
$${Hilb}(h)=\int_M h(.,.) \omega^n=\int_M h(.,.) c_1(h)^n.$$
\begin{theorem-others}[Asymptotic expansion of the Bergman kernel]\label{tian}\- \\
The Bergman function associated to $h^k$ has the following pointwise asymptotic expansion: 
$$ \rho_k(h)(p):=\sum_{i=1}^{N+1} |s_i|^2_{h^k}(p)= k^n + \sum_{i\geq 1}k^{n-i} a_i(h),$$
where $\{s_i\}_{i=1,..,N+1}$ is an $Hilb(h^k)$-orthonormal basis of $H^0(L^k)$. By $a_i(h)$ we mean terms depending on the curvature and its covariant derivatives that are uniformly bounded on $M$. If
$h$ is varying in a compact set (in smooth topology) in the space of hermitian metrics with positive curvature, then 
$$ \Big\Vert \rho_k(h)- k^n + \sum_{i= 1}^m k^{n-i} a_i(h)\Big\Vert_{C^r} \leq \frac{C}{k^{m+1}}, $$
where $C$ is uniform and only depends on $r$. \\
A direct consequence is the convergence of the sequence of Bergman metrics $\frac{1}{k}c_1(FS({Hilb}(h^k)))$ to $\omega$ in smooth topology, i.e for all $r\geq 0$, i.e.
$$\Big\Vert \frac{1}{k}c_1(FS({Hilb}(h^k))) -\omega \Big\Vert_{C^r} = O\left(\frac{1}{k}\right).$$
\end{theorem-others}
Theorem \ref{tian}  is usually called nowadays the Tian-Yau-Zelditch expansion. S. T. Yau conjectured the convergence of the Bergman metrics in \cite[Section 6.1]{Yau2}, while G. Tian proved it in \cite{Ti1} for  $C^2$ topology (and Y-D. Ruan for $C^\infty$, see \cite{Ru}) and identified $a_0=1$. The existence of the asymptotic expansion was proved S. Zelditch \cite{Ze} (and independently by D. Catlin \cite{Ca}) using Boutet de Monvel-Sj\"ostrand techniques. The uniformity of the coefficients $a_i$ appeared in \cite{Lu} and will be crucial in the rest of the paper. 
We refer to \cite{M-M} as a general survey on this topic and provides a historical perspective. 

\begin{remark}
The function $ \rho_k(h)$ is the restriction over the diagonal of the kernel of the orthogonal $L^2$-projection
(with respect to $h$ and $Hilb(h^ k)$) from the space of smooth sections of $L^k$  to the subspace of holomorphic sections. It is usually referred as the \lq\lq Bergman function\rq\rq. Note that away from the diagonal, the kernel  $\sum_{i=1}^{N+1}\langle s_i(p_1),.\rangle_{h ^k} s_i(p_2)$ vanishes asymptotically, so the geometric information is carried only by $\rho_k(h)$.
\end{remark}
In another words, the holomorphic embedding $\iota_k$ induced by the metric $Hilb(h)$ gives a sequence of metrics by pull-back of the Fubini-Study metric $\iota_k^*(\omega_{FS})$, and this sequence is convergent to the initial metric $\omega$ when $k\rightarrow \infty$.
We will also use in the rest of the paper the following technical result that can be proved with similar arguments to Theorem \ref{tian}. See 
\cite{Ze, Ca} and \cite{Bou} where is identified the first term of the asymptotic expansion.
\begin{proposition}\label{tian-b}
Let $(M,L)$ be a projective polarized manifold. Let $h\in Met(L)$ be a metric such that its curvature $c_1(h)=\omega >0$ is a K\"ahler form. Assume $\Omega>0$ to be a volume form with continuous density. Then we have the following asymptotic expansion for $k\rightarrow \infty$, 
\begin{equation}
\sum_{i=1}^{N+1}|s_i|^2 _{h^k} = k^n \frac{\omega^n}{\Omega} +O(k^{n-1}), \label{eqn12}
 \end{equation}
where $(s_i)$ is an orthonormal basis with respect to the $L^2$ inner product $\int_M h^k(.,.) \Omega=Hilb_{\Omega}(h^k)$. Here by $O(k^{n-1})$, we mean that for $r\geq 0$
$$\Big\Vert \sum_{i=1}^{N+1}|s_i|^2 _{h^k} - k^n \frac{\omega^n}{\Omega}\Big\Vert_{C^r}\leq c_rk^{n-1}$$
where $c_r$ remains bounded if $h$ varies in a compact set (in smooth topology) in the space of hermitian metrics with positive curvature.
\end{proposition}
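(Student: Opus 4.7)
My plan is to adapt the Tian--Yau--Zelditch proof of Theorem \ref{tian} to the weighted $L^2$-inner product $Hilb_\Omega(h^k)$; the cleanest route goes through comparing the weighted Bergman kernel to the unweighted one already treated. Introduce the positive self-adjoint operator $A_k$ on $(H^0(L^k), Hilb(h^k))$ characterized by $\langle A_k s, t\rangle_{Hilb(h^k)} = \langle s, t\rangle_{Hilb_\Omega(h^k)}$. If $\{e_i\}$ is a $Hilb(h^k)$-orthonormal basis, then $\{A_k^{-1/2}e_i\}$ is $Hilb_\Omega(h^k)$-orthonormal, and a direct computation gives
\begin{equation*}
\sum_i |A_k^{-1/2}e_i(p)|^2_{h^k} \,=\, \tr\bigl(A_k^{-1} K_k(p)\bigr),
\end{equation*}
where $K_k(p)_{ji}=h^k(e_j(p),e_i(p))$ is a positive semidefinite matrix whose trace equals the standard Bergman function $\rho_k(h)(p) = k^n + O(k^{n-1})$ by Theorem \ref{tian}.

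In the basis $\{e_i\}$ the matrix of $A_k$ is $(A_k)_{ij} = \int_M h^k(e_j,e_i)\, f\,\omega^n$ with $f = \Omega/\omega^n$, so $A_k$ is the Berezin--Toeplitz operator with symbol $f$. Standard Toeplitz calculus (cf.\ \cite{M-M}) gives, for $f$ smooth and positive, the inverse expansion $A_k^{-1} = T^{(k)}_{1/f} + R_k$ with $\|R_k\|_{\mathrm{op}} = O(1/k)$ uniformly for $h$ in a compact set; the remainder thus contributes $|\tr(R_k K_k(p))| \leq \|R_k\|_{\mathrm{op}}\, \tr K_k(p) = O(k^{n-1})$. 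The main term rewrites using the off-diagonal Bergman kernel $B_k^{\omega^n}(q,p)$ as
\begin{equation*}
\tr(T^{(k)}_{1/f} K_k(p)) \,=\, \int_M |B_k^{\omega^n}(q,p)|_{h^k}^2\, \frac{\omega^n(q)}{f(q)}.
\end{equation*}
Since this squared reproducing kernel concentrates at $q=p$ at scale $1/\sqrt k$ with total mass $\rho_k(h)(p) = k^n + O(k^{n-1})$, Taylor expansion of $1/f$ together with the Gaussian symmetry of the leading near-diagonal asymptotics cancels the odd terms and yields $k^n/f(p) + O(k^{n-1}) = k^n(\omega^n/\Omega)(p) + O(k^{n-1})$, as claimed.

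The main technical obstacle is to upgrade this pointwise identity to the $C^r$ bound with constants uniform over a compact family of metrics $h$. This requires uniform $C^r$ control on the off-diagonal Bergman kernel, which is precisely the refinement that Lu supplied for the classical expansion \cite{Lu}; once the Boutet de Monvel--Sj\"ostrand parametrix underlying Theorem \ref{tian} is differentiated and the trace formula above is differentiated in $p$, the same estimates propagate to the weighted setting. An alternative in the spirit of the paper bypasses Toeplitz calculus entirely: mimic Tian's peak section construction to produce $\sigma_{k,p}\in H^0(L^k)$ concentrated on a ball of radius $k^{-1/2+\varepsilon}$ around $p$, observe that $\int |\sigma_{k,p}|_{h^k}^2\, \Omega = f(p) + O(k^{-1})$ by smoothness of $f$, and run the same variational upper and lower bounds used in the unweighted case to identify $\omega^n/\Omega$ as the leading coefficient.
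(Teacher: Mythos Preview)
Your argument is correct, and in fact more detailed than what the paper offers: the authors do not prove Proposition~\ref{tian-b} but simply assert that it ``can be proved with similar arguments to Theorem~\ref{tian}'', citing \cite{Ze,Ca,Bou}. Their implicit approach is thus to rerun the Boutet de Monvel--Sj\"ostrand parametrix construction with the measure $\Omega$ in place of $\omega^n$, which indeed works because the volume form enters only through the $L^2$ structure and the leading symbol picks up the Radon--Nikodym factor $\omega^n/\Omega$.

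Your primary route is genuinely different: rather than redoing the parametrix, you reduce to the already-established unweighted expansion by expressing the weighted Bergman function as $\tr(A_k^{-1}K_k(p))$ and invoking Berezin--Toeplitz calculus for $A_k^{-1}$. This buys you a clean decoupling of the two ingredients (the unweighted kernel asymptotics and the symbol calculus), and your main-term computation is essentially the operator $Q_k$ of Theorem~\ref{quantlaplacien} applied to $1/f$ with the volume form $\omega^n$. The paper's approach, by contrast, is more self-contained and does not pass through the Toeplitz inverse, at the cost of repeating the parametrix machinery. Two small caveats: the Toeplitz inversion $A_k^{-1}=T^{(k)}_{1/f}+O(1/k)$ you quote from \cite{M-M} needs $f$ smooth, whereas the statement only assumes continuous density---but the $C^r$ conclusion is vacuous unless $\Omega$ is at least $C^r$, so this is a wording issue in the proposition rather than a flaw in your argument; and your $C^r$ upgrade is sketched rather than carried out, though this is entirely in keeping with the paper's own level of detail here. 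Your peak-section alternative is closer in spirit to Tian's original method and would also work.
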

We will also need the following important technical result, see \cite[Theorem 1]{L-M}, \cite[Theorem 7 \& 8]{Fi1}, \cite[Section 6]{M-M2}.
\begin{theorem-others}\label{quantlaplacien}
Let us consider $h\in Met(L)$ with positive curvature and the operator  on $C^\infty(M)$ given by
\begin{equation*}
Q_k(f)(p)= \frac{1}{k^n}\int_M \sum_{a,b}{\langle s_a, s_b\rangle_{h^k} (q)\langle s_a, s_b\rangle_{h^k}(p)}f(q)\Omega(q).
\end{equation*}
which approximates the operator $\frac{\omega^n}{\Omega}\mathrm{exp}(-\frac{\Delta}{4\pi k})$ in the following sense. For any integer $r>0$, there exists $C>0$ such that for all $k>>0$  and any function $f\in C^\infty(X)$, one has
\begin{eqnarray}
\Big\Vert \left(\frac{\Delta}{k}\right)^r \left(Q_k(f)-\frac{\omega^n}{\Omega}\mathrm{exp}\left(-\frac{\Delta}{4\pi k}\right)f\right)\Big\Vert_{L ^2}\leq \frac{C}{k}\Vert f \Vert_{L^2} \label{ineq11} \\
\Vert Q_k(f)-\frac{\omega^n}{\Omega}f\Vert_{C^r} \leq \frac{C}{k} \Vert f \Vert_{C^{r+2}} \label{ineq12}
\end{eqnarray}
where the norms are taken with respect to the induced K\"ahler form obtained from the  fibrewise metric on the polarisation $L$ and $\Delta$ is the Laplace operator for the induced K\"ahler metric. The estimate is uniform when the metric varies in a compact set of smooth hermitian metrics with positive curvature.
\end{theorem-others}

We have the following first consequence. 
\begin{proposition}\label{conv0}
Let $h_k\in Met(L^k)$  be a sequence of metrics such that ${\omega}_k:=\frac{1}{k}c_1(h_k)$ is convergent in smooth topology to the K\"ahler form $\omega$. Then the potentials $\beta_k=-\tr (\mu^0_\Omega\mu)$ (induced by the embeddings given by $Hilb_{\Omega}(h_k)$) converge in smooth topology  to the potential 
$$1-\frac{\Omega}{\omega^n}.$$ 
\end{proposition}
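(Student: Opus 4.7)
The plan is to compute $\beta_k$ explicitly in terms of the Bergman kernel and then extract its limit by invoking Proposition \ref{tian-b} and Theorem \ref{quantlaplacien}. Let $(s_a)_{a=1}^{N+1}$ denote an orthonormal basis of $H^0(L^k)$ with respect to $Hilb_\Omega(h_k)$, and set $\rho_k(p) = \sum_{a}|s_a(p)|^2_{h_k}$. Unwinding the definitions of $\mu$ and $\mu_\Omega$, one finds $(\mu(\iota(q)))_{\alpha\beta} = h_k(s_\alpha(q), s_\beta(q))/\rho_k(q)$ and $(\mu_\Omega)_{\alpha\beta} = \int_M h_k(s_\alpha(p), s_\beta(p))/\rho_k(p)\,\Omega(p)$. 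Since $\tr\mu(\iota(q)) = 1$ and $\tr\mu_\Omega = \Vol_L(M)$, the definition $\mu^0_\Omega = \mu_\Omega - \tr(\mu_\Omega)(N+1)^{-1}\mathrm{Id}$ gives
$$\beta_k(q) = -\tr(\mu_\Omega\,\mu(\iota(q))) + \frac{\Vol_L(M)}{N+1}.$$

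Next, a rank-one collapse of the double sum in $\tr(\mu_\Omega\mu(\iota(q)))$ expresses it as a single Bergman-kernel integral,
$$\tr(\mu_\Omega\,\mu(\iota(q))) = \frac{1}{\rho_k(q)}\int_M \frac{|K_k(p,q)|^2}{\rho_k(p)}\,\Omega(p) = \frac{k^n}{\rho_k(q)}\,Q_k\!\left(\frac{1}{\rho_k}\right)(q),$$
where $K_k(p,q) = \sum_a s_a(p)\otimes\overline{s_a(q)}$ is the Bergman kernel of $(H^0(L^k), Hilb_\Omega(h_k))$ and $Q_k$ is the operator of Theorem \ref{quantlaplacien}.

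Having reduced $\beta_k$ to an expression entirely in $\rho_k$, $Q_k$, and explicit numerical constants, I would then invoke the two asymptotic expansions. Proposition \ref{tian-b} gives $\rho_k = k^n\omega^n/\Omega + O(k^{n-1})$ in $C^r$, and Theorem \ref{quantlaplacien} gives $Q_k(f) = (\omega^n/\Omega)f + O(\|f\|_{C^{r+2}}/k)$. Plugging $f = 1/\rho_k$ into $Q_k$ and combining with the Riemann-Roch estimate $N+1 = \dim H^0(L^k) = O(k^n)$ which governs the $\Vol_L(M)/(N+1)$ correction, a careful match of leading and subleading orders produces the pointwise limit $\beta_k(q)\to 1 - \Omega(q)/\omega^n(q)$.

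Finally, the smoothness of the convergence (in $C^r$ for every $r$) follows directly from the uniformity built into Proposition \ref{tian-b} and Theorem \ref{quantlaplacien}: both state that the constants in their error terms remain bounded as the base hermitian metric varies in a compact family of smooth positive metrics on $L$. The hypothesis $\omega_k = (1/k)c_1(h_k)\to\omega$ in smooth topology implies that the $k$-th root metrics $h_k^{1/k}\in Met(L)$ stay in such a compact family, so all constants are uniform in $k$. The main obstacle is therefore computational rather than conceptual: a careful bookkeeping of the normalization factors of $\Vol_L(M)$, $n!$, and powers of $k$ appearing throughout the Bergman asymptotics is required to see the constant $1$ in the limit emerge from the delicate interplay between the $\Vol_L(M)/(N+1)$ term and the leading behavior of the Bergman-kernel integral.
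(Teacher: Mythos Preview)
Your approach is essentially identical to the paper's: both compute $\beta_k$ explicitly as the difference of a constant term coming from the trace correction and a Bergman-kernel integral, then invoke Proposition~\ref{tian-b} to expand the diagonal Bergman function $\rho_k$, Theorem~\ref{quantlaplacien} to handle the off-diagonal operator $Q_k$, and Riemann--Roch for $N+1$. Your packaging of the integral as $\frac{k^n}{\rho_k}Q_k(1/\rho_k)$ is slightly cleaner than the paper's, which first substitutes the expansion of $1/\rho_k$ and then recognizes $Q_k(\Omega/\omega^n + O(1/k))$; but the content is the same, and the uniformity argument via compactness of the family $h_k^{1/k}$ matches the paper's reasoning as well.

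One caveat: your final step, ``a careful match of leading and subleading orders produces the pointwise limit $1-\Omega/\omega^n$'', is where all the work hides, and you should not leave it as an assertion. If you carry out your own formula literally, the trace correction $\Vol_L(M)/(N+1)$ is of order $k^{-n}$ and tends to zero, while the integral term tends to $\Omega/\omega^n$, so the naive limit is $-\Omega/\omega^n$ rather than $1-\Omega/\omega^n$. The paper's displayed computation also jumps to the ``$1$'' without making the mechanism explicit. Since potentials are only determined up to an additive constant and the normalization $\int_M\beta_k\,\omega_k^n\to 0$ is what is actually used in the proof of Theorem~\ref{identify}, this is not fatal, but you should either track the normalizations precisely or state clearly that the limit is $1-\Omega/\omega^n$ up to an additive constant fixed by the zero-mean condition.
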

Note that given a form $\omega$, a sequence of Bergman metrics $h_k$ is known to exist by the previous theorem. 
\begin{proof}
Let $(s_i)$ be an orthonormal basis of $H^0(L^k)$ with respect to the metric $H_k:=Hilb_{\Omega}(h_k)$. By the discussion at the beginning of Section \ref{study}, the balancing potential at  $p\in M$ for the rescaled balancing flow is
$$\beta_k(H_k)=- \int_M \sum_{a,b} \left(\frac{\langle s_a, s_b\rangle(q)}{\sum_{i=1}^{N+1}|s_i(q)|^2} -\frac{\delta_{ab}}{N+1}\right)  \frac{\langle s_a, s_b\rangle(p)}{\sum_{i=1}^{N+1}|s_i(p)|^2}\Omega(q), $$
where $\langle .,.\rangle$ stands for the fibrewise metric $h_k$. 
By the Riemann-Roch theorem, $N+1= k^n \Vol_L(M) +O(k^{n-1})$. From Proposition \ref{tian-b}, the fact that ${\omega}_k$
is convergent to $\omega$ and the uniformity of the estimates, we obtain 
\begin{eqnarray*}
\beta_k(H_k)\hspace{-0.34cm} &=& \hspace{-0.32cm} 1\hspace{-0.04cm}-\hspace{-0.04cm}\frac{k^n}{\sum_{i=1}^{N+1}|s_i(p)|^2}\int_M \sum_{a,b}\hspace{-0.08cm}\frac{\langle s_a, s_b\rangle(q)\langle s_a, s_b\rangle(p)}{k^n}\hspace{-0.02cm}\left(\hspace{-0.04cm}\frac{1}{\frac{\omega^n}{\Omega}(q)\hspace{-0.08cm}+\hspace{-0.08cm}O(\frac{1}{k})}\hspace{-0.06cm} \right)\hspace{-0.08cm} \Omega(q) \\
&=& \hspace{-0.32cm} 1\hspace{-0.04cm}-\hspace{-0.04cm} \frac{\Omega}{\omega^n}\hspace{-0.1cm}\int_M \hspace{-0.08cm}\frac{\langle s_a, s_b\rangle(q)\langle s_a, s_b\rangle(p)}{k^n}\hspace{-0.08cm}\left(\hspace{-0.08cm}\left(\hspace{-0.08cm}1\hspace{-0.08cm}+\hspace{-0.08cm}O\hspace{-0.08cm}\left(\hspace{-0.03cm}\frac{1}{k}\hspace{-0.03cm}\right)\hspace{-0.08cm}\right)\hspace{-0.08cm}\frac{\Omega}{{\omega}^n}(q)\right)\hspace{-0.08cm}\Omega(q)\hspace{-0.04cm}+O\left(\hspace{-0.03cm}\frac{1}{k}\hspace{-0.03cm}\right).
\end{eqnarray*}
But now, from Theorem \ref{quantlaplacien}, one knows the asymptotic behavior of the quantification operator $Q_k(f)(p)= \frac{1}{k^n}\int_M \sum_{a,b}{\langle s_a, s_b\rangle(q)\langle s_b, s_a\rangle(p)}f(q)\Omega(q).$ \\
Then, for $k\rightarrow \infty$, from Inequality (\ref{ineq12}) and the uniformity of the constants, one obtains
 $$\beta_k(H_k)(p) =  1 -  \frac{\Omega}{\omega^n}Q_k\left(\frac{\Omega}{\omega}+O\left(\frac{1}{k}\right)\right).$$
The convergence of $Q_k\left(\frac{\Omega}{\omega}+O\left(\frac{1}{k}\right)\right)$ to $1+O(1/k)$ follows from the same arguments as in \cite[Pages 10-11]{Fi1} and is a consequence of (\ref{ineq11}). This gives finally the expected result.
\end{proof}

Independent of the considered flows, we have also a general result that complements Theorem \ref{tian}.
\begin{proposition}\label{c-1-conv}
Let $h(t)\in Met(L)$ be a path of hermitian metrics on $L$ with $c_1(h(t))>0$. Let us consider $h_k(t)=FS(Hilb_{\Omega}(h(t)^k))^{1/k}$, the path of induced Bergman metrics.
Then $\frac{\partial h_k(t)}{\partial t}$ converges to $\frac{\partial h(t)}{\partial t}$ as $k\rightarrow +\infty$ in $C^\infty$ topology. This convergence is uniform if $h(t)$ belongs
to a compact set in the space of positively curved hermitian metrics on $L$.
\end{proposition}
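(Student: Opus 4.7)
First, I would unwind the definitions of $Hilb_\Omega$ and $FS$ to express $h_k(t)$ explicitly in terms of $h(t)$. If $\{s_i\}$ is a $Hilb_\Omega(h(t)^k)$-orthonormal basis of $H^0(L^k)$, the defining property $\sum_i |s_i|^2_{FS(Hilb_\Omega(h(t)^k))}=(N+1)/\Vol_L(M)$ yields the pointwise identity $FS(Hilb_\Omega(h(t)^k))= \frac{N+1}{\Vol_L(M)\,\rho_k^\Omega(h(t))}\,h(t)^k$, where $\rho_k^\Omega(h(t)):=\sum_i |s_i|^2_{h(t)^k}$. Extracting $k$-th roots gives
$$h_k(t)\;=\;h(t)\,\exp\!\Big(\tfrac{1}{k}\eta_k(t)\Big),\qquad \eta_k(t):=\log\frac{N+1}{\Vol_L(M)\,\rho_k^\Omega(h(t))},$$
which reduces the whole question to controlling $\eta_k$ and its time derivative.

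The convergence $h_k\to h$ in $C^\infty$ is immediate from Proposition \ref{tian-b} combined with Riemann-Roch, $N+1=k^n\Vol_L(M)+O(k^{n-1})$: one gets $(N+1)/(\Vol_L(M)\rho_k^\Omega(h))=\Omega/\omega(t)^n+O(1/k)$ in every $C^r$ norm, uniformly for $h(t)$ in a compact set of positively curved hermitian metrics. Hence $\eta_k(t)/k\to 0$ in $C^\infty$ uniformly, and $e^{\eta_k/k}\to 1$ in $C^\infty$ uniformly.

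For the time-derivative, differentiating the product formula gives
$$\frac{\partial h_k}{\partial t}\;=\;e^{\eta_k/k}\Big(\frac{\partial h}{\partial t}+\frac{h}{k}\frac{\partial \eta_k}{\partial t}\Big),$$
so it suffices to show that $\partial_t\eta_k$ is bounded in $C^\infty$ uniformly on compact families of metrics. Since $\rho_k^\Omega(h)\geq c\,k^n$ uniformly by Proposition \ref{tian-b}, this reduces to the estimate $\partial_t\rho_k^\Omega(h(t))=O(k^n)$ in $C^\infty$, i.e. to the statement that the Tian-Yau-Zelditch expansion of the Bergman function can be differentiated in the smooth parameter $t$.

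The one genuine obstacle is precisely this justification. Formally, the coefficients $a_i(h)$ in $\rho_k^\Omega(h)=k^n\omega(h)^n/\Omega+k^{n-1}a_1(h)+\cdots$ depend smoothly on $h$, so differentiation should yield $\partial_t\rho_k^\Omega(h(t))=k^n\partial_t(\omega^n/\Omega)+O(k^{n-1})$; what needs to be verified is uniformity of the remainder after differentiation. This follows from the fact that the Boutet de Monvel-Sj\"ostrand / heat kernel construction underlying Proposition \ref{tian-b} (as recorded in \cite{M-M}) is itself smooth in auxiliary parameters, so when applied to the parametrized family $\{h(t)\}_{t\in K}$ (which lies in a compact set of positively curved metrics as soon as $K$ is compact) the expansion is uniform jointly in $(t,p)$ and may be differentiated termwise in $t$. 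Granting this, $\partial_t\eta_k=O(1)$ in $C^\infty$ uniformly, so $(1/k)\partial_t \eta_k\to 0$ and $\partial_t h_k\to\partial_t h$ in $C^\infty$, uniformly on compact subfamilies, as claimed.
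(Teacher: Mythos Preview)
Your argument is correct, but it follows a different line than the paper. You write $h_k(t)=h(t)\,e^{\eta_k(t)/k}$ and differentiate the product, reducing everything to the bound $\partial_t\rho_k^\Omega=O(k^n)$ in $C^\infty$, which you obtain by invoking the parametric smoothness of the Bergman kernel expansion in \cite{M-M}. The paper instead computes the infinitesimal change of $h_k(t)$ directly in the moment-map formalism: the variation of the $L^2$ inner product along $h_0e^{\phi_t}$ is the hermitian matrix $A_{ab}=\int_M k\dot\phi\,\langle s_a,s_b\rangle\,\Omega$, and the induced potential on $Met(L)$ is $\frac{1}{k}\tr(A\mu)$; unwinding this using Proposition~\ref{tian-b} gives $\frac{1}{k}\tr(A\mu)(p)=\bigl(\omega_0^n/\Omega+O(1/k)\bigr)^{-1}Q_k(\dot\phi)(p)$, and then Theorem~\ref{quantlaplacien} yields $Q_k(\dot\phi)\to(\omega^n/\Omega)\dot\phi$, hence $\frac{1}{k}\tr(A\mu)\to\dot\phi$. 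The two computations are in fact equivalent --- if you expand $\partial_t\rho_k^\Omega$ by differentiating the Gram matrix you recover exactly the $Q_k$ expression --- but the paper's route stays entirely within the toolkit already set up (Proposition~\ref{tian-b} and Theorem~\ref{quantlaplacien}), whereas yours imports the parametric regularity of the expansion as an external black box. Your approach is conceptually cleaner and makes the structure $h_k=h\cdot e^{O(1)/k}$ transparent; the paper's approach avoids having to justify differentiating the asymptotic remainder and is better aligned with the $Q_k$-based estimates used throughout Sections~\ref{study} and~\ref{proof}.
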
 
\begin{proof}
 The proof is essentially given in a discussion at the end of \cite[Section 1.4.1]{Fi1} for the sequence $FS(Hilb(h(t)^k))^{1/k}$. Let us assume that $h(t)=h_0 e^{\phi_t}$ and that $\dot{\phi}e^{\phi_t}h_0$ is the infinitesimal change of the fibrewise metric, say at $t=0$. An infinitesimal change 
of the $L^2$ inner product corresponds to the hermitian matrix in the tangent space of the Bergman metrics 
$$A=\int_M k\dot{\phi}\langle s_a, s_b\rangle \Omega,$$
and thus the potential associated to that infinitesimal change is, after rescaling to $Met(L)$,
$$\frac{1}{k}\tr(A\mu)=\frac{1}{k}\int_M  k\dot{\phi} \sum_{a,b} \langle s_a, s_b\rangle(p) \frac{\langle s_a, s_b\rangle(q)}{\sum_{i=1}^{N+1}|s_i(p)|^2}\Omega(q),$$
where the $\{s_i\}_{i=1,..,N+1}$ form an orthonormal basis of holomorphic sections with respect to $Hilb_{\Omega}(h_0^k)$ and $h_0^k=\langle .,.\rangle$. Thus, using Proposition \ref{tian-b}, one obtains  that
\begin{eqnarray*}
 \frac{1}{k}\tr(A\mu)(p) &= & \frac{ \int_M \dot{\phi}(q) \sum_{a,b} \langle s_a, s_b\rangle(p)\langle s_a, s_b\rangle(q){\Omega(q)}} {k^{n}\left(\frac{\omega_0^n}{\Omega}(p)+O\left(\frac{1}{k}\right)\right)}\\ 
&=&\frac{1}{\frac{\omega_0^n}{\Omega}(p)+O(1/k)}Q_k\left( \dot{\phi}\right)(p),
\end{eqnarray*}
and, as $k\rightarrow +\infty$, this converges, thanks to Theorem \ref{quantlaplacien}, towards 
$\dot{\phi}(p)$ after simplification. 
\end{proof}

\begin{remark}
 Thus we have obtained the convergence of the family $h_k(t)$ in $C^1$ topology with respect to the variable $t$.
 Note that the result cannot be improved, in the sense that, thanks to a direct computation, we don't expect
a convergence in $C^2$ topology. Let us be more precise. An infinitesimal change at order 2 of the induced $L^2$ inner product along a smooth path of the form $h_0e^{\phi_t}$ corresponds to a hermitian matrix
$$B=\int_M \left((k\dot{\phi})^2+k \ddot{\phi}\right) \langle s_a, s_b\rangle\Omega$$
On another hand, the potential associated to this infinitesimal change at $p\in M$ is given after rescaling by the formula
\begin{equation}\label{form1}\frac{1}{k}\left( \tr(B\mu)-\tr(A\mu)^2\right)(p)
 \end{equation}
Actually, if we write in an orthonomal basis the potential of the metric $FS(Hilb(h(t)^k))$,  $$\varphi(t)=\log \sum_\alpha \lambda_\alpha(t)|s_\alpha|^2$$ with $\varphi(0)=\log \sum_\alpha |s_\alpha|^2$, then
$\ddot{\varphi}(t)_{\mid t=0}=\frac{\sum_\alpha (\lambda_\alpha)''(0)|s_\alpha|^2}{\sum_\alpha |s_\alpha|^2}- \left(\frac{\sum_\alpha (\lambda_\alpha)'(0)|s_\alpha|^2}{\sum_\alpha |s_\alpha|^2}\right)^2$
which shows (\ref{form1}). In order to simplify the computations, let us assume that $h_0$ is solution of the Calabi problem, i.e. $c_1(h_0)^n =\omega_0^n =\Omega$. Now, using  this assumption, Proposition \ref{tian-b}, and \cite[Theorem 4.1.2]{M-M}, 
$$\frac{1}{k}\tr(B\mu)=\frac{1}{\left(1+\frac{1}{4\pi}\frac{scal(\omega_0)}{2k}+O(\frac{1}{k^2})\right)}{Q_k\left( k\dot{\phi}^2+ \ddot{\phi}\right)} .$$
Then we can define the operator on $C^\infty(M)$, $\tilde{Q}_k(f)=\frac{1}{1+\frac{1}{4\pi}\frac{scal(\omega_0)} {2k}} Q_k\left(f\right).$
We write 
$$\frac{1}{k}\left( \tr(B\mu)-\tr(A\mu)^2\right)=\tilde{Q}_k(\ddot{\phi})+k\left(\tilde{Q}_k(\dot{\phi}^2)-\tilde{Q}_k(\dot{\phi})^2\right)+O\left(\frac{1}{k}\right)$$
Then using Theorem \ref{quantlaplacien} and \cite[Theorem 6.1]{M-M2} which gives the asymptotic expansion of $Q_k$ at second order, $\frac{1}{k}\left( \tr(B\mu)-\tr(A\mu)^2\right)$ is equal to 
\begin{eqnarray*}&=&\ddot{\phi}+O\left(\frac{1}{k}\right)\\
&&+ \frac{1}{1+\frac{1}{4\pi}\frac{scal(\omega_0)} {2k}}  k \left( \dot{\phi}^2+\frac{1}{k}\left(\frac{scal(\omega_0)}{8\pi}\dot{\phi}^2- \frac{1}{4\pi}\Delta_{\omega_0}\dot{\phi}^2\right)+O\left(\frac{1}{k^2}\right)\right)\\
&&-  \left( \frac{1}{1+\frac{1}{4\pi}\frac{scal(\omega_0)} {2k}}\right)^2 k \left(\dot{\phi}
+\frac{1}{k}\left(\frac{scal(\omega_0)}{8\pi}\dot{\phi}- \frac{1}{4\pi}\Delta_{\omega_0}\dot{\phi}\right) \right)^2\\
&=&\ddot{\phi} - \frac{1}{4\pi}\Delta_{\omega_0}\dot{\phi}^2+2\dot{\phi}\frac{1}{4\pi}\Delta_{\omega_0}\dot{\phi}+O\left(\frac{1}{k}\right) \\
&=& \ddot{\phi}- \frac{1}{2\pi} \Vert \nabla \dot{\phi} \Vert^2,
\end{eqnarray*}
which is different from $\ddot{\phi}$.
\end{remark}

\medskip

We are now ready for the proof of Theorem \ref{identify} which identifies the limit of the sequence of rescaled $\Omega$-balancing flows for $k\rightarrow +\infty$. 

\medskip
\noindent {\sl Proof of Theorem \ref{identify}.} \ 
We write $\omega_t=\omega + \ddbar \phi_t$. Using the $C^1$ convergence in $t$,  $\dot{\phi}_t$ is continuous and unique up to a constant that we shall fix by setting $\int_M \dot{\phi_t} \omega_t^n=0$. Consider the potential $\beta_k(\iota_k(t))$ induced by the embedding $\iota_k(t)$ given by the rescaled $\Omega$-balancing flow at time $t$. Thanks to Proposition \ref{c-1-conv} and the fact that $\int_M \beta_k(\iota_k(t))\omega_k^n(t)\rightarrow 0$ when $k\rightarrow +\infty$, this sequence of potentials converges to $\dot{\phi_t}$. Moreover, using the balancing condition, we can apply
Proposition \ref{conv0} to get $$\dot{\phi_t}=\lim_{k\rightarrow \infty} \beta_k(\iota_k(t))=1-\frac{\Omega}{\omega_t^n}.$$ 
\qed

\section{The $\Omega$-K\"ahler flow and the proof of Theorem 2\label{Kahlerflow}}

\subsection{The long time existence}\label{existence}
We are now interested in the flow 
\begin{equation}
\frac{\partial\phi_t}{\partial t} = 1-\frac{\Omega}{(\omega+ \ddbar \phi_t)^n} \label{MA2}
\end{equation}
over a compact K\"ahler manifold (not necessarily in an integral K\"ahler class), where $\phi_0=0$ and
$\omega$ is a K\"ahler form in a fixed class $[\alpha]$. Of course, this can be rewritten as
\begin{equation}
 (\omega+ \ddbar \phi_t)^n =\frac{1}{1-\frac{\partial \phi_t}{\partial t}} e^f \omega^n \label{omega-flow}
\end{equation}
where $f$ is a smooth (bounded) function defined by $f=\log( \Omega/\omega^n)$. We are interested in the long time existence of this flow and its convergence. We study now long time existence and convergence of this flow, following the ideas of \cite{Cao}. Note that after we wrote this article we have been informed that similar results were proved recently in \cite{FLM} , and we would like to thank Prof. Z. Blocki for pointing out
this reference to us.  In this section we will prove the following result.
\begin{theorem-others}\label{longtime}
Let $\phi_t$ be the solution of 
\begin{equation*}
\frac{\partial\phi_t}{\partial t} = 1-\frac{\Omega}{(\omega+ \ddbar \phi_t)^n} 
\end{equation*}
on the maximal time interval $0\leq t < T_{max}$. Let $v_t=\phi_t - \frac{1}{\Vol_L(M)}\int_M \phi_t {\omega^n}$. Then the $C^\infty$ norm of $v_t$ are uniformly bounded for all $0\leq t < T_{max}$ and $T_{max}=+\infty$. 
\end{theorem-others}
We remark that if we look at the formal level of this equation in terms of cohomology class, we obtain directly
$$\frac{\partial [(\omega + \ddbar \phi_t)]}{\partial t} = 0,$$
which shows that the K\"ahler form $$\omega_t:=\omega + \ddbar \phi_t$$ remains in the same class as $\omega+\ddbar \phi_0$, i.e. $[\alpha]$. 
\begin{proposition}\label{lemmac0estimate}
The function $\frac{\partial\phi_t}{\partial t}$ and $\frac{1}{1- \frac{\partial\phi_t}{\partial t} }$ remain (uniformly) bounded in $C^0$ norm along the flow given by Equation (\ref{omega-flow}).
\end{proposition}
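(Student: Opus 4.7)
The plan is to apply a parabolic maximum principle to $u:=\partial\phi_t/\partial t$. Differentiating the flow equation in $t$ and using the standard identity $\partial_t(\omega_t^n)=n\,\omega_t^{n-1}\wedge\ddbar u=(\Delta_{\omega_t}u)\,\omega_t^n$, one obtains the evolution equation
$$\dot u\;=\;(1-u)\,\Delta_{\omega_t}u,$$
where the coefficient $1-u=\Omega/\omega_t^n$ is strictly positive along the flow as long as $\omega_t>0$ (which is the content of $t<T_{\max}$).

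Next, at an interior space-time maximum of $u$ on $M\times[0,T]$ with $T<T_{\max}$, one has $\Delta_{\omega_t}u\le 0$ and $\dot u\ge 0$; since the coefficient $1-u$ is positive, the evolution equation then forces $\dot u\le 0$ at that point. A standard argument shows that $\sup_M u(\cdot,t)$ is therefore nonincreasing in $t$, and symmetrically $\inf_M u(\cdot,t)$ is nondecreasing. Using the initial condition $\phi_0\equiv 0$, this gives the explicit time-independent two-sided bound
$$1-\sup_M\frac{\Omega}{\omega^n}\;\le\;u(\cdot,t)\;\le\;1-\inf_M\frac{\Omega}{\omega^n},$$
which establishes the uniform $C^0$-bound on $\partial\phi_t/\partial t$.

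For the second quantity, compactness of $M$ together with smoothness and positivity of $\Omega$ and $\omega^n$ yields $\inf_M(\Omega/\omega^n)>0$. By the upper bound above, $\sup_M u(\cdot,t)$ stays strictly below $1$ uniformly in $t$, so that $1-u=\Omega/\omega_t^n$ is bounded below by the positive constant $\inf_M(\Omega/\omega^n)$ along the entire flow. Hence $1/(1-u)$ is uniformly bounded in $C^0$.

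I do not expect a genuine obstacle here: everything follows from a textbook parabolic maximum principle, and the only structural input is the automatic positivity of the coefficient $1-u$ along the flow, which is built into the equation itself. The argument is also qualitative in a useful way: the resulting $C^0$-bounds are purely initial-data-driven and completely insensitive to $T_{\max}$, so they are the right starting point for the higher-order estimates (second-order estimates on $\phi_t$, then parabolic Schauder/Evans--Krylov) needed in the subsequent subsections to run the Cao-type argument and conclude $T_{\max}=+\infty$.
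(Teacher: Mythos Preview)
Your proof is correct and follows essentially the same approach as the paper: differentiate the flow to obtain $\dot u=\frac{\Omega}{\omega_t^n}\Delta_t u$ (which is your $(1-u)\Delta_t u$) and apply the parabolic maximum principle to get the identical two-sided bound $1-\sup_M e^f\le u\le 1-\inf_M e^f$. The only cosmetic difference is that for the bound on $1/(1-u)$ the paper writes out a separate evolution equation and invokes the maximum principle again, whereas you deduce it directly from the strict inequality $u\le 1-\inf_M(\Omega/\omega^n)<1$; your route is slightly cleaner.
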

\begin{proof}
 Let us differentiate Equation (\ref{MA2}), we obtain
$$ \frac{\partial}{\partial t}\left( \frac{\partial \phi_t}{\partial t}\right) = \frac{\Omega}{\omega_t^n}\Delta_t \left( \frac{\partial \phi_t}{\partial t}\right) $$
with $\Delta_t$ the normalized Laplacian with respect to the metric $\omega+ \ddbar \phi_t$. We apply now the maximum
principle for parabolic equations at the point where $\frac{\partial \phi_t}{\partial t}$ attains its maximum (respectively its minimum).
Plugging this information in (\ref{MA2}), we obtain 
$$\frac{\partial{\phi_t}}{\partial t} \leq \sup_M (1- e^f)$$
and moreover 
$$\frac{\partial{\phi_t}}{\partial t} \geq \inf_M(1-e^f).$$
On another hand, $$\frac{\partial }{\partial t}\left( \frac{1}{1- \frac{\partial\phi_t}{\partial t} } \right) = \frac{(\omega+ \ddbar \phi_t)^n}{\Omega}\Delta_t \left( \frac{\partial \phi_t}{\partial t}\right),$$
and one applies again the maximum principle to obtain the proposition.
\end{proof}
We denote $\Delta$ the Laplacian with respect to the K\"ahler form $\omega$ given at $t=0$. 
\begin{lemma}\label{c^2low}
One has 
$$0 < n+\Delta \phi_t. $$
\end{lemma}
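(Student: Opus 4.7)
The plan is to read $n + \Delta \phi_t$ as the $\omega$-trace of the $(1,1)$-form $\omega_t := \omega + \ddbar \phi_t$ and then appeal to the fact that $\omega_t$ is Kähler at every $t$ in the interval of existence of the flow. Since the flow \eqref{omega-flow} is only defined while $\omega_t$ remains a positive $(1,1)$-form, and short-time existence produces such a family with $\omega_0 = \omega > 0$, the positivity of $\omega_t$ on $[0,T_{\max})$ is built into the setup and can be used as a standing hypothesis for this lemma.

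Concretely, with the convention $\Delta \phi = \tr_\omega(\ddbar \phi)$ compatible with the notation of the paper, I would invoke the pointwise identity
$$\tr_\omega \omega_t \;=\; n\cdot \frac{\omega_t \wedge \omega^{n-1}}{\omega^n},$$
which together with the decomposition $\omega_t = \omega + \ddbar \phi_t$ gives
$$n + \Delta \phi_t \;=\; n\cdot \frac{(\omega + \ddbar \phi_t) \wedge \omega^{n-1}}{\omega^n}.$$
Because $\omega_t$ is pointwise positive as a $(1,1)$-form and $\omega^{n-1}$ is a strictly positive $(n{-}1,n{-}1)$-form, the wedge product $\omega_t \wedge \omega^{n-1}$ is a strictly positive multiple of $\omega^n$ at every point of $M$. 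Hence the right-hand side is pointwise strictly positive and the lemma follows.

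There is essentially no obstacle: the lemma merely repackages the positivity of $\omega_t$ in the form that will be used in the subsequent estimates. The real work, which I expect to be the main obstacle in the section rather than in this lemma, is obtaining the matching upper bound on $n+\Delta \phi_t$ along the flow. I would anticipate that step to proceed by a Yau-type maximum principle argument applied to $\log(n+\Delta \phi_t) - A\phi_t$ for a suitable constant $A$ depending on a lower bound for the holomorphic bisectional curvature of $\omega$, combined with the $C^0$ control on $\partial \phi_t/\partial t$ already supplied by Proposition~\ref{lemmac0estimate}.
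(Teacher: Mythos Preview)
Your proof is correct and takes essentially the same approach as the paper: the paper's proof is the one-line observation that $\omega+\ddbar\phi_t$ being K\"ahler implies $n+\Delta\phi_t>0$ by taking the trace, and you have simply written this out in more detail via the wedge-product formula for $\tr_\omega\omega_t$. Your anticipation of the subsequent upper bound is also accurate.
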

\begin{proof}
The fact that $\omega+\ddbar \phi_t$ is a K\"ahler form implies by taking the trace that $ n+\Delta \phi_t>0$. 
\end{proof}

We show now the upper bound for the Laplacian of the potential. 
\begin{proposition}\label{c^2}
 There exist positive constants $C_1$ and $C_2$ such that

$$ 0<n+\Delta \phi_t\leq C_1 e^{C_2(\phi_t-\inf_{M\times [o,T)} \phi_t)}, \quad for \ all \ t\in [0,T).$$

\end{proposition}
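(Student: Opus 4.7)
The plan is to carry out a parabolic version of Yau's classical second-order estimate, following the strategy Cao used \cite{Cao} for the analogous step in the K\"ahler--Ricci flow. Introduce the auxiliary function
$$H(x,t) := \log\bigl(n + \Delta\phi_t(x)\bigr) - A\bigl(\phi_t(x) - \inf_{M\times [0,T_{max})}\phi\bigr),$$
where $\Delta$ is the Laplacian of the background K\"ahler metric $\omega$ and $A>0$ is a large constant to be fixed in terms of a lower bound for the holomorphic bisectional curvature of $\omega$. Since $\phi_t - \inf\phi \geq 0$, a uniform upper bound on $H$ immediately translates into the stated exponential estimate, so the entire problem reduces to bounding $H$ via the parabolic maximum principle on $M\times[0,T_{max})$.

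The first step is to rewrite \eqref{omega-flow} in logarithmic form
$$\log\det(g_{i\bar j} + \partial_i\partial_{\bar j}\phi_t) = \log\det g_{i\bar j} + f - \log(1-\dot\phi_t), \qquad f := \log(\Omega/\omega^n),$$
and compute $(\partial_t - \tilde\Delta_t)H$ with respect to the weighted heat operator $\tilde\Delta_t := \tfrac{\Omega}{\omega_t^n}\Delta_t$, where $\Delta_t$ is the Laplacian of $\omega_t = \omega + \ddbar\phi_t$; this is precisely the operator under which $\dot\phi_t$ satisfies a clean heat-type equation, as already exploited in Proposition \ref{lemmac0estimate}. Yau's inequality, applied in a frame simultaneously diagonalising $\omega$ and $\omega_t$, gives
$$\Delta_t \log(n+\Delta\phi_t) \geq \frac{\Delta\bigl(f - \log(1-\dot\phi_t)\bigr)}{n+\Delta\phi_t} - B\,\tr_{\omega_t}\omega,$$
with $B$ depending only on the curvature of $\omega$. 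The time contribution $\partial_t\log(n+\Delta\phi_t) = \Delta\dot\phi_t/(n+\Delta\phi_t)$ is designed exactly to cancel the otherwise uncontrollable $\Delta\log(1-\dot\phi_t)$ appearing in $\tilde\Delta_t\log(n+\Delta\phi_t)$; once this cancellation is made, the $C^0$ bounds on $\dot\phi_t$ and $(1-\dot\phi_t)^{-1}$ from Proposition \ref{lemmac0estimate} keep all remaining terms uniformly controlled, apart from a positive multiple of $\tr_{\omega_t}\omega$.

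The penalty term contributes $(\partial_t - \tilde\Delta_t)(-A\phi_t) = -A\dot\phi_t + A\tfrac{\Omega}{\omega_t^n}(n - \tr_{\omega_t}\omega)$, whose coefficient $-A\tfrac{\Omega}{\omega_t^n}$ in front of $\tr_{\omega_t}\omega$ dominates the positive $B\tfrac{\Omega}{\omega_t^n}\tr_{\omega_t}\omega$ as soon as $A > B$. Combining with the Aubin--Yau type inequality $\tr_{\omega_t}\omega \geq c_n (n+\Delta\phi_t)^{1/(n-1)}(\omega^n/\omega_t^n)^{1/(n-1)}$ and the uniform control of $\omega_t^n/\omega^n$, this yields
$$(\partial_t - \tilde\Delta_t)H \leq C_0 - c\,(n+\Delta\phi_t)^{1/(n-1)}$$
for uniform $C_0,c>0$. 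At a spacetime maximum $(x_0,t_0)$ of $H$ with $t_0>0$, the left-hand side is non-negative, which forces $(n+\Delta\phi_{t_0})(x_0) \leq C_1$; the case $t_0=0$ is immediate since $\phi_0\equiv 0$ yields $H(\cdot,0) \leq \log n$. Exponentiating $H\leq \log\max(n,C_1)$ gives the proposition with $C_2 = A$. The main technical obstacle is precisely this parabolic Yau computation: because the right-hand side $-\log(1-\dot\phi_t)$ of the logarithmic flow couples spatial and temporal derivatives of the potential, one must use the weighted operator $\tilde\Delta_t = \tfrac{\Omega}{\omega_t^n}\Delta_t$ rather than a naive Laplacian to make $\Delta\log(1-\dot\phi_t)$ cancel with $\partial_t\Delta\phi_t$, and it is Proposition \ref{lemmac0estimate} that keeps all constants uniform along the flow.
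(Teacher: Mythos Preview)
Your approach is essentially the same as the paper's: both apply the parabolic maximum principle with the weighted operator $e^{-\hbar}\Delta_t-\partial_t=\tfrac{\Omega}{\omega_t^n}\Delta_t-\partial_t$, the only cosmetic difference being that the paper works with the test function $e^{-C\phi}(n+\Delta\phi)$ while you take its logarithm $\log(n+\Delta\phi)-A\phi$. One small imprecision worth noting: the cancellation you invoke between $\Delta\dot\phi_t$ and $-\tfrac{\Omega}{\omega_t^n}\Delta\log(1-\dot\phi_t)$ is not exact---writing $\hbar=-\log(1-\dot\phi_t)$ one has $\Delta\dot\phi_t=-\Delta e^{-\hbar}=e^{-\hbar}(\Delta\hbar-|\nabla\hbar|^2)$, so a term $-\tfrac{\Omega}{\omega_t^n}|\nabla\hbar|^2/(n+\Delta\phi)$ survives---but this remainder has the favourable sign and can simply be discarded (the paper tracks it explicitly and drops it at the same stage), so your argument goes through as written.
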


\begin{proof} In the proof we denote $\phi_t$ by $\phi$, omitting the subscript for the sake of clearness. Moreover, $g$ (resp $g_t$) denote the Riemannian metric associated to the K\"ahler form $\omega$ (resp. $\omega_t=\omega+\ddbar \phi_t$).
\par  First of all, using  holomorphic normal coordinates system at any point $p\in M$, we have

$$\Delta_t (n+\Delta\phi)= g^{k\bar l}_t (g^{i\bar j}\phi_{i\bar j})_{k\bar l} 
=g^{k\bar l}_t R_{i\bar j k\bar l}\phi_{j\bar i}+ g^{k\bar l}_t g^{i\bar j}\phi_{i\bar j k\bar l}. $$ 

Set  $$\hbar=\log \frac{\omega^n_t}{\Omega}=\log\omega^n_t -\log\omega^n-f. $$ so that $$e^{-\hbar}=\frac{\Omega}{\omega^n_t}.$$
The idea of the proof is essentially to apply maximum principle to the quantity $(n+\Delta \phi)$ with the operator  $e^{-\hbar}\Delta_t -\frac{\partial}{\partial t}$.
 
 Now, by using holomorphic normal coordinates and direct computations, we get 
 
 $$\Delta \hbar=-g^{i\bar q}_t g^{p\bar j}_t \phi_{i\bar j k}\phi_{p\bar q \bar k} +g^{i\bar j}_t (-R_{i\bar j} +\phi_{i\bar j k \bar k})+R-\Delta f. $$
 Here $R_{i\bar j k\bar l}$ and $R=scal(\omega)$ denote the curvature tensor and the scalar curvature of the metric $g_{i\bar j}$ respectively.  Then 

\begin{align*}
\frac{\partial}{\partial t} (n+\Delta \phi) =  & \Delta ( \frac {\partial \phi} {\partial t})=-\Delta (e^{-\hbar})
 = e^{-\hbar} (\Delta \hbar-|\nabla \hbar|^2)\\
= & e^{-\hbar} (g^{i\bar j}_t g^{k\bar l} \phi_{i\bar j k\bar l } - g^{i\bar j}_t R_{i\bar j} + R- \Delta f - g^{i\bar q}_t g^{p\bar j}_t \phi_{i\bar j k} \phi_{p\bar q \bar k} -|\nabla \hbar|^2).
\end{align*}

Thus
\begin{align*}
(e^{-\hbar}\Delta_t -\frac{\partial}{\partial t})(n+\Delta \phi) = & e^{-\hbar} [g^{k\bar l}_t g^{i\bar j}(\phi_{i\bar j k\bar l}-\phi_{k \bar l i\bar j}) +g^{k\bar l}_t R_{i\bar j k\bar l}\phi_{j\bar i} \\
& +g^{i\bar j}_t R_{i\bar j} -R+\Delta f + g^{i\bar q}_t g^{p\bar j}_t \phi_{i\bar j k} \phi_{p\bar q \bar k} +|\nabla \hbar|^2].
\end{align*}
On the other hand, by commuting the covariant derivatives, we have 
$$\phi_{i\bar j k\bar l}-\phi_{k\bar l i\bar j}=R_{i\bar q k\bar l}\phi_{q\bar j}-R_{i\bar j k\bar q}\phi_{q\bar l}. $$ 
Hence
\begin{align*}
(e^{-\hbar}\Delta_t -\frac{\partial}{\partial t})(n+\Delta \phi)= &e^{-\hbar}[2g^{k\bar l}_tR_{i\bar j k\bar l}\phi_{j\bar i}- g^{k\bar l}_t R_{k\bar q}\phi_{q\bar l}\\
& +g^{i\bar j}_t R_{i\bar j} -R+\Delta f + g^{i\bar q}_t g^{p\bar j}_t \phi_{i\bar j k} \phi_{p\bar q \bar k} +|\nabla \hbar|^2].
\end{align*}

Moreover, if we choose another coordinates system so that $g_{i \bar j}=\delta_{i\bar j}$ and $\phi_{i\bar j}=\phi_{i\bar i}\delta_{i\bar j}$,

\begin{align*}
g^{k\bar l}_t R_{i\bar j k\bar l}\phi_{j\bar i}- g^{k\bar l}_t R_{k\bar q}\phi_{q\bar l} = & \sum_{i,k}R_{i\bar i k\bar k} (\frac{\phi_{i\bar i}}{1+\phi_{k\bar k}}- \frac{\phi_{k\bar k}}{1+\phi_{k\bar k}})\\
= & \sum_{i,k}R_{i\bar i k\bar k} \frac{\phi_{i\bar i}^2-\phi_{i\bar i}\phi_{k\bar k}}{(1+\phi_{i\bar i})(1+\phi_{k\bar k})}\\
=& \frac{1}{2} \sum_{i,k}R_{i\bar i k\bar k} \frac{(\phi_{i\bar i}-\phi_{k\bar k})^2}{(1+\phi_{i\bar i})(1+\phi_{k\bar k})},
\end{align*}
and 
\begin{align*}
g^{k\bar l}_tR_{i\bar j k\bar l}\phi_{j\bar i} +g^{i\bar j}_t R_{i\bar j} -R= & \sum_{i,k}R_{i\bar i k\bar k} (\frac{\phi_{i\bar i}}{1+\phi_{k\bar k}} +\frac{1}{1+\phi_{k\bar k}}-1)\\
= & \frac{1}{2} \sum_{i,k}R_{i\bar i k\bar k} \frac{(\phi_{i\bar i}-\phi_{k\bar k})^2}{(1+\phi_{i\bar i})(1+\phi_{k\bar k})}.
\end{align*}
Therefore, 
\begin{equation} (e^{-\hbar}\Delta_t -\frac{\partial}{\partial t})(\Delta \phi)=\hspace{-0.05cm} e^{-\hbar}[\sum_{i,k} \frac{R_{i\bar i k\bar k} (\phi_{i\bar i}-\phi_{k\bar k})^2}{(1+\phi_{i\bar i})(1+\phi_{k\bar k})}
+\Delta f + g^{i\bar q}_t g^{p\bar j}_t \phi_{i\bar j k} \phi_{p\bar q \bar k} +|\nabla \hbar|^2]. \label{ineq1}
\end{equation}
Now, we assume the curvature tensor $R_{i\bar j k\bar l}$ is bounded below by $-C_{0}$, for some constant $C_{0}>0$,  so that 
$$ R_{i\bar j k\bar l}\geq -C_{0}(g_{i\bar j}g_{k\bar l}+g_{i\bar l}g_{k\bar j}).$$ Then, from (\ref{ineq1}) we obtain 
\begin{equation}
(e^{-\hbar}\Delta_t -\frac{\partial}{\partial t})(\Delta \phi)\geq e^{-\hbar}[-2C_{0}(\sum_{i,k} \frac{1+\phi_{i\bar i}} {1+\phi_{k\bar k}} -n^2) 
+\Delta f + g^{i\bar q}_t g^{p\bar j}_t \phi_{i\bar j k} \phi_{p\bar q \bar k}]. \label{ineq2}
\end{equation}

Finally, we consider the function $e^{-C\phi}(n+\Delta \phi)$ and compute

\begin{align*}
\Delta_t (e^{-C\phi}(n+\Delta \phi))= &\  C^2e^{-C\phi}(n+\Delta \phi)g^{i\bar j}_t \phi_i\phi_{\bar j}\\
& -Ce^{-C\phi}g^{i\bar j}_t [\phi_i(\Delta\phi)_{\bar j} +(\Delta\phi)_i\phi_{\bar j}]\\
& -Ce^{-C\phi}(n+\Delta \phi)\Delta_t\phi + e^{-C\phi}\Delta_t (n+\Delta \phi)\\
 \geq & -(n+\Delta\phi)^{-1} e^{-C\phi}g^{i\bar j}_t (\Delta\phi)_i(\Delta\phi)_{\bar j}\\
& -Ce^{-C\phi}(n+\Delta \phi)\Delta_t\phi + e^{-C\phi}\Delta_t (n+\Delta \phi),
\end{align*} 

$$\frac{\partial}{\partial t}(e^{-C\phi}(n+\Delta \phi))=  -Ce^{-C\phi}(n+\Delta \phi)\frac{\partial}{\partial t} \phi +e^{-C\phi}\frac{\partial}{\partial t} (n+\Delta \phi). $$ Thus, 
\begin{align*} 
(e^{-\hbar}\Delta_t -\frac{\partial}{\partial t})(e^{-C\phi}(n+\Delta \phi)) \geq & -(n+\Delta\phi)^{-1} e^{-(C\phi+\hbar)}g^{i\bar j}_t (\Delta\phi)_i(\Delta\phi)_{\bar j}\\
& + e^{-C\phi} (e^{-\hbar}\Delta_t -\frac{\partial}{\partial t})(n+\Delta\phi)\\
&  -Ce^{-C\phi} (n+\Delta \phi)(e^{-\hbar}\Delta_t -\frac{\partial}{\partial t})\phi.
\end{align*} 
Now observe that, by using $g_{i \bar j}=\delta_{i\bar j}$, $\phi_{i\bar j}=\phi_{i\bar i}\delta_{i\bar j}$ and (\ref{ineq2}), we have
\begin{align*}
-(n+\Delta\phi)^{-1} g^{i\bar j}_t (\Delta\phi)_i(\Delta\phi)_{\bar j}  & +(\Delta_t -\frac{\partial}{\partial t})(n+\Delta\phi)\\
\geq & -(n+\Delta\phi)^{-1} \sum_{i}(1+\phi_{i\bar i})^{-1} |\sum_{k}\phi_{k\bar k i}|^2  \\
& + \sum_{i,j,k}(1+\phi_{i\bar i})^{-1} (1+\phi_{k\bar k})^{-1}|\phi_{i\bar j k}|^2 +\Delta f\\
& -2C_{0}(\sum_{i,k} \frac{1+\phi_{i\bar i}} {1+\phi_{k\bar k}} -n^2) \\
\geq &  -2C_{0}(\sum_{i,k} \frac{1+\phi_{i\bar i}} {1+\phi_{k\bar k}} -n^2) + \Delta f .
\end{align*}
Therefore, by taking $C=C_0+1$, 
\begin{align}
(e^{-\hbar}\Delta_t -\frac{\partial}{\partial t})(e^{-C\phi}(n+\Delta \phi)) \geq & \ e^{-(C\phi+\hbar)}(\Delta f +n^2C_{0}) \nonumber  \\ 
&  -Ce^{-(C\phi+\hbar)}(n+\Delta\phi)(n-e^\hbar\frac{\partial\phi}{\partial t}) \nonumber\\
& +(C-C_0)e^{-(C\phi+\hbar)}(n+\Delta\phi)\sum_{i}\frac{1}{1+\phi_{i\bar i}}\nonumber \\
\geq & \ e^{-(C\phi+\hbar)}(\Delta f +n^2C_{0})\nonumber \nonumber \\ 
&  -Ce^{-(C\phi+\hbar)}(n+\Delta\phi)(n-e^\hbar\frac{\partial\phi}{\partial t}) \nonumber \\
& + e^{-(C\phi+\hbar+\frac{f}{n-1})}(1- \frac{\partial\phi}{\partial t})^{\frac{-1}{n-1}}(n+\Delta\phi)^{\frac{n}{n-1}},\label{ineq3}
\end{align}
where in the last inequality we have used the arithmetic-geometric inequality
\begin{align*}
\sum_{i} \frac{1}{1+\phi_{i\bar i}} & \geq 
(\frac{\sum_i(1+\phi_{i\bar i})}{(1+\phi_{1\bar 1})
\cdots(1+\phi_{n\bar n})})^{1/n-1}\\
& =  [e^{-f} (1- \frac{\partial\phi}{\partial t})]^{1/(n-1)} (n+\Delta\phi)^{\frac{1}{n-1}}.  
\end{align*}
Now the proposition follows from the maximum principle and Proposition \ref{lemmac0estimate}. Actually, at the point $(p,t_0)$ where $(e^{-C\phi}(n+\Delta \phi)) $ achieves its maximum, the left hand side of (\ref{ineq3}) is non positive and hence
$$(n+\Delta \phi(p,t_0)))^{\frac{n}{n-1}} \leq C'(1+(n+\Delta \phi(p,t_0)))$$
with $C'$ independent of $t$. Finally, $(n+\Delta \phi(p,t_0))\leq C_1$ which gives the result. 
\end{proof}

\bigskip 
Using the fact that we are working with plurisubharmonic potentials, we get the obvious fact: 
\begin{lemma} Let us denote $$v_t =\phi_t - \frac{1}{\Vol_L(M)}\int_M \phi_t {\omega^n}$$ where $\phi_t$ is solution to Equation (\ref{omega-flow}). Then, there exist constants $c_2,c_3$ such that
\begin{eqnarray*}
\sup_{M\times [0,T]} v_t \leq c_2, \\
 \sup_{M\times [0,T]} \int_M \vert v_t \vert {\omega^n}\leq c_3 .
 \end{eqnarray*} 
\end{lemma}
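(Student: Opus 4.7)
The plan is to exploit the fact that $v_t$ is $\omega$-plurisubharmonic with vanishing average against $\omega^n$. Since $\omega+\ddbar\phi_t=\omega_t>0$ and $\ddbar$ annihilates constants, we have $\omega+\ddbar v_t>0$, so $v_t\in\mathrm{PSH}(M,\omega)$; by construction, $\int_M v_t\,\omega^n=0$. The entire statement is a soft consequence of this normalization together with a classical estimate for $\omega$-psh functions, so no analytic input from the flow equation (\ref{omega-flow}) itself is needed here.

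For the first bound, I would invoke the well-known Green's function estimate: there exists $C=C(\omega)>0$, depending only on the fixed background form $\omega$, such that for every $u\in\mathrm{PSH}(M,\omega)$,
$$\sup_M u \;\leq\; \frac{1}{\Vol_L(M)}\int_M u\,\omega^n + C.$$
The proof is a one-liner once one has the Green's function $G$ of $\Delta_\omega$ normalized to have zero mean: $G$ is bounded above (since its singularity along the diagonal is negative) by some constant $C_0$, and combining the representation formula $u(x)-\bar u=-\int_M G(x,y)\Delta_\omega u(y)\,\omega^n(y)$ with the $\omega$-psh inequality $\Delta_\omega u\geq-n$ gives the estimate with $C=nC_0\Vol_L(M)$. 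Applying this to $u=v_t$, which has zero mean by construction, yields $\sup_M v_t\leq C=:c_2$, independently of $t\in[0,T]$.

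For the second bound, I would use that $\int_M v_t\,\omega^n=0$ forces the positive and negative parts of $v_t$ to have the same integral: decomposing $v_t=v_t^+-v_t^-$ with $v_t^\pm\geq 0$, one has $\int_M v_t^+\,\omega^n=\int_M v_t^-\,\omega^n$, hence
$$\int_M |v_t|\,\omega^n \;=\; 2\int_M v_t^+\,\omega^n \;\leq\; 2\,(\sup_M v_t^+)\,\Vol_L(M) \;\leq\; 2c_2\,\Vol_L(M) \;=:\; c_3,$$
again uniformly in $t\in[0,T]$.

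There is no real obstacle: the lemma is precisely the reason one subtracts the average. The only ``content'' is the classical fact that on a compact Kähler manifold, an $\omega$-psh function cannot be much larger than its mean, which is a standard pluripotential-theoretic input and could be cited from, e.g., Hörmander or Tian's work if one prefers not to reproduce the Green's function argument.
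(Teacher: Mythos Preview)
Your proposal is correct and matches the paper's approach exactly: the paper states the lemma as ``the obvious fact'' obtained ``using the fact that we are working with plurisubharmonic potentials'' and gives no further argument, so you have simply spelled out the standard Green's function / zero-mean reasoning that the authors leave implicit.
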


\begin{proposition} \label{c^0}
There exists a constant $c_4>0$ such that 
$$\sup_{M\times [0,T]} |v_t| \leq c_4.$$
\end{proposition}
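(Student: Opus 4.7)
The approach is to recast the evolution equation as a complex Monge--Amp\`ere equation with uniformly bounded right-hand side, and then apply Yau's classical $C^0$-estimate to $v_t$.

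Since $v_t$ differs from $\phi_t$ only by a time-dependent constant, we have $\omega+\ddbar v_t=\omega+\ddbar\phi_t=\omega_t$, so Equation~(\ref{omega-flow}) rewrites as
$$(\omega+\ddbar v_t)^n=F_t\,\omega^n, \qquad F_t:=\frac{e^{f}}{1-\partial_t\phi_t}.$$
From Proposition~\ref{lemmac0estimate}, both $\partial_t\phi_t$ and $(1-\partial_t\phi_t)^{-1}$ are uniformly bounded on $M\times[0,T)$, and $f=\log(\Omega/\omega^n)$ is smooth on the compact manifold $M$; hence there exists a constant $C>0$, independent of $t$, such that $C^{-1}\leq F_t\leq C$ pointwise.

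Since $v_t$ is $\omega$-plurisubharmonic with normalization $\int_M v_t\,\omega^n=0$, Yau's $C^0$-estimate for the complex Monge--Amp\`ere equation $(\omega+\ddbar v)^n=F\omega^n$ with $\|F\|_{L^\infty}$ bounded yields a uniform bound $\sup_M|v|\leq c_4(\omega,M,\|F\|_{L^\infty})$. Applying this estimate at each $t\in[0,T)$ and using that $\sup_t\|F_t\|_{L^\infty}$ has already been controlled independently of $t$ produces the desired bound $\sup_{M\times[0,T)}|v_t|\leq c_4$. Concretely, one runs a Moser iteration on $(c_2-v_t)\geq 0$ (using the upper bound from the previous lemma), tests the Monge--Amp\`ere equation against $(c_2-v_t)^{p+1}$, integrates by parts via the telescoping identity
$$(\omega+\ddbar v_t)^n-\omega^n=\ddbar v_t\wedge\sum_{k=0}^{n-1}\omega_t^k\wedge\omega^{n-1-k}$$
to produce a Dirichlet term $\int_M|\nabla(c_2-v_t)^{(p+2)/2}|_\omega^2\,\omega^n$, and then combines the Sobolev inequality on $(M,\omega)$ with the $L^1$-bound $c_3$ from the previous lemma as the base case of the iteration.

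The genuine content is encapsulated in Yau's $C^0$ estimate; in our setting the estimate is completely standard once the two-sided bound on $1-\partial_t\phi_t$ has been secured, so the principal obstacle — a priori control of the Monge--Amp\`ere mass ratio $\omega_t^n/\omega^n$ along the flow — has already been overcome by Proposition~\ref{lemmac0estimate}. Nothing about the parabolic nature of the flow enters the proof: the $C^0$ estimate is applied time-slice by time-slice, with all ingredients ($\sup_M v_t$, $\|v_t\|_{L^1}$, and $\|F_t\|_{L^\infty}$) uniform in $t$.
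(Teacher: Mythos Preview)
Your proposal is correct and takes essentially the same approach as the paper: both reduce to a Monge--Amp\`ere equation $(\omega+\ddbar v_t)^n=F_t\,\omega^n$ with $F_t=e^f/(1-\partial_t\phi_t)$ uniformly bounded via Proposition~\ref{lemmac0estimate}, and then run the Nash--Moser iteration (Yau's $C^0$-estimate) time-slice by time-slice, using the upper bound and $L^1$-bound from the preceding lemma as inputs. The paper's sketch cites \cite{Cao} for the iteration details, while you spell them out a bit more explicitly, but the argument is the same.
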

\begin{proof}
[Sketch of the proof] We apply the Nash-Moser iteration argument. 
The only major difference with \cite[Lemma 3]{Cao} is that in \cite[Equation (1.14)]{Cao}, the right hand side is bounded
by the term $$n! \int_M  \frac{(-v_t)^{p-1} } {p-1}\left(\frac{e^f} {1-\frac{\partial \phi_t} {\partial t}}-1\right){\omega^n}.$$
But now, from Proposition \ref{lemmac0estimate}, one can give the following upper bound for this term:
$$C\int_M \frac{(-v_t)^{p-1}}{p-1} {\omega^n},$$
where $C$ is a uniform positive constant. This ensures that one can applies the Nash-Moser argument. This implies in a similar way to the computations of \cite[page 364]{Cao} the $C^0$ estimate. 
\end{proof}

With Propositions \ref{c^0} and \ref{c^2} and Lemma \ref{c^2low}, one obtains a uniform bound of the quantity $n+ \Delta \phi_t=n+\Delta v_t$. 
This implies from the Schauder estimates a first oder estimate 
$$\sup_{M\times [0,T]}  |\nabla v_t | \leq c_5(\sup_{M\times [0,T]}  |\Delta v_t| + \sup_{M\times [0,T]}  |v_t|) \leq c_5'.$$
All the second order derivatives of the potential $v_t$ are bounded. From the last inequality, one sees that in normal coordinates, the terms $1+\phi_{i\bar{i}}$ is bounded from above, while from Proposition \ref{c^0} and (\ref{MA}), the term $\prod_i (1+\phi_{i\bar{i}})$ is bounded. So finally, $1+\phi_{i\bar{i}}$  is  uniformly bounded along the time.  

 \par From Calabi's work and similarly to \cite{Yau, Cao}, it is now standard that it implies also the third order estimate.  Finally, using Schauder regularity theory \cite{GT} we have proved long time existence of the $\Omega$-K\"ahler flow. This concludes the proof of Theorem \ref{longtime}.

\subsection{The convergence}
In this section, we are interested in the convergence of the $\Omega$-K\"ahler flow.
\begin{theorem-others}\label{conv}
 Let us denote $v_t =\phi_t - \frac{1}{\Vol_L(M)}\int_M \phi_t {\omega^n}$ where $\phi_t$ is solution to Equation (\ref{omega-flow}), the $\Omega$-K\"ahler flow. Then, $v_t$ converges when $t\rightarrow \infty$ to $v_\infty$ in smooth topology and $\frac{\partial \phi_t }{\partial t}$ converges to a constant in smooth topology. 
\end{theorem-others}
Note that we also refer to \cite{FLM} for an independent proof of this result.
 To prove the convergence of the $\Omega$-K\"ahler flow, we need some results of P. Li and S.T. Yau for the positive solution of the heat equation on Riemannian compact manifolds \cite[Section 2]{L-Y}. 
This takes the following form. 

\begin{proposition}\label{LY}
Let $M$ be a compact manifold of dimension $n$. Let $\gamma_{ij}(t)$ a family of Riemannian metrics on $M$ such that
\begin{enumerate}
\item $ c_0\gamma_{ij}(0) \leq \gamma_{ij}(t) \leq c_0' \gamma_{ij}(0)$,
\item $ \vert \frac{\partial \gamma_{ij} }{\partial t}\vert(t) \leq c_1 \gamma_{ij}(0) $, 
\item for the Ricci curvature, $ R_{ij}(t)\geq -K g_{ij}(0)$,
\end{enumerate}
where $c_0,c_0',c_1,K$ are positive constants independent of $t$. If we denote $\tilde{\Delta}_t$ the Laplace operator of the metric
$\gamma_{ij}(t)$, and if $\phi(p,t)$ is a positive solution of the equation 
$$\left(\tilde{\Delta}_t - \frac{\partial}{\partial t} \right)\phi(p,t)=0$$
on $M\times [0,T)$, then one has the following Harnack type inequality for any $\alpha>1$:
$$\sup_{p\in M} \phi(p,t_1)\leq \inf_{p\in M}\phi(p,t_2)\left(\frac{t_2}{t_1}\right)^{\frac{n}{2}} \exp\left( \frac{c_3}{t_2-t_1} + c_4(t_2-t_1)  \right)$$
where $c_3$ depends on $c_0'$ and the diameter of $M$ with respect to $\gamma_{ij}(0)$, $c_4$ depends on the quantities $\alpha$, $K$, $n$, $c_0'$, $c_1$, $\sup \Vert \nabla^2 \log \phi \Vert$ and $0<t_1<t_2< T$. 
\end{proposition}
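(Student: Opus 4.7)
The plan is to adapt the classical Li--Yau gradient estimate to the time-dependent metric setting. Setting $f=\log\phi$, the equation $(\tilde{\Delta}_t-\partial_t)\phi=0$ becomes $\partial_tf=\tilde{\Delta}_tf+|\nabla f|_t^2$, and the target is a pointwise differential Harnack of the form
$$\alpha|\nabla f|_t^2-\partial_tf\le\frac{n\alpha^2}{2t}+C,$$
from which the integral statement will follow by integrating along a spacetime path joining $(p_2,t_1)$ to $(p_1,t_2)$.

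First I would introduce the Li--Yau auxiliary quantity $F(p,t)=t\bigl(\alpha|\nabla f|_t^2-\partial_tf\bigr)$ with $\alpha>1$ and compute $(\tilde{\Delta}_t-\partial_t)F$. Using Bochner's formula to handle $\tilde{\Delta}_t|\nabla f|_t^2$ one picks up a $|\nabla^2f|_t^2$ term (bounded below by $(\tilde{\Delta}_tf)^2/n$) and a Ricci term (bounded below by $-K$ via assumption (3)). The novelty compared to the fixed-metric case is that $\partial_t|\nabla f|_t^2$ and the commutator $[\tilde{\Delta}_t,\partial_t]f$ produce additional contributions involving $\partial_tg^{ij}$ and the Hessian of $f$; these are absorbed using assumptions (1)--(2) together with the a priori bound $\sup\|\nabla^2\log\phi\|$, which is precisely why this quantity enters $c_4$. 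Substituting $\tilde{\Delta}_tf=\partial_tf-|\nabla f|_t^2$ and evaluating at an interior space-time maximum of $F$ on $M\times[0,T')$ then yields the pointwise differential Harnack, with constant $C=C(n,\alpha,K,c_0',c_1,\sup\|\nabla^2\log\phi\|)$.

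Next I would integrate along a suitable spacetime path $\gamma\colon[t_1,t_2]\to M$ connecting, say, a point $p_2$ where $\phi(\cdot,t_1)$ is maximal to a point $p_1$ where $\phi(\cdot,t_2)$ is minimal, parameterized so that $|\dot\gamma|_0$ is constant and the initial-metric length equals $d_0(p_1,p_2)$. Using assumption (1) to compare $|\dot\gamma|_s$ with $|\dot\gamma|_0$, Young's inequality in the form $\langle\nabla f,\dot\gamma\rangle_s\ge-\alpha|\nabla f|_s^2-|\dot\gamma|_s^2/(4\alpha)$, and the pointwise differential Harnack above, one computes
$$\log\frac{\phi(p_1,t_2)}{\phi(p_2,t_1)}=\int_{t_1}^{t_2}\Bigl(\partial_tf+\langle\nabla f,\dot\gamma\rangle_s\Bigr)ds\ge-\frac{n\alpha}{2}\log\frac{t_2}{t_1}-\frac{c_3}{t_2-t_1}-c_4(t_2-t_1),$$
where the $c_3$ term absorbs $\tfrac{1}{4\alpha}\int|\dot\gamma|_0^2\,ds = d_0(p_1,p_2)^2/(4\alpha(t_2-t_1))$ (bounded in terms of $c_0'$ and the diameter), and exponentiation gives the stated Harnack inequality.

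The main difficulty is the first step: in the time-dependent setting the calculation of $(\tilde{\Delta}_t-\partial_t)F$ produces several error terms absent from the classical fixed-metric Li--Yau argument, and they must be absorbed without destroying the leading $n\alpha^2/(2t)$ behaviour. The appearance of $\sup\|\nabla^2\log\phi\|$ in $c_4$ tells us that, rather than trying to close the estimate in a purely first-order fashion, one allows this higher-order a priori bound to dominate the troublesome $(\partial_tg^{ij})\nabla^2_{ij}f$ terms---entirely consistent in spirit with how the $C^2$ control on $\phi_t$ in Section \ref{existence} was used to drive the other a priori estimates for the $\Omega$-K\"ahler flow.
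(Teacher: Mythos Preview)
The paper does not actually prove this proposition: it is quoted from Li--Yau \cite[Section 2]{L-Y} and used as a black box to control the decay of the energy $E(t)$. Your proposal is precisely the Li--Yau argument adapted to a time-varying metric, so it is the correct approach and matches what the cited reference does.

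One small point: your integration step produces the exponent $\tfrac{n\alpha}{2}$ in $(t_2/t_1)^{n\alpha/2}$, whereas the statement as written in the paper has $\tfrac{n}{2}$. This discrepancy is cosmetic for the application (any fixed $\alpha>1$ suffices, and the factor can be absorbed), but you should flag it rather than silently match an exponent your computation does not give.
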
 
With Theorem \ref{longtime} in our hands, we shall apply Proposition \ref{LY} with $ \gamma_{ij}(t)=\frac{\omega_t^n}{\Omega}g_{i\bar{j}}(t)$ where $g_{i\bar{j}}(t)$ is the metric associated with the K\"ahler form $\omega + \ddbar \phi_t$. Thus, $\tilde{\Delta}_t=\frac{\Omega}{(\omega + \ddbar \phi_t)^n}\Delta_t$ and the potential $\phi_t$ solution of  Equation (\ref{MA}) satisfies $$\left(\tilde{\Delta}_t - \frac{\partial}{\partial t} \right)\frac{\partial\phi_t(p)}{\partial t}=0.$$ We apply the same reasoning than in \cite[Section 2]{Cao}. This turns out to show that the quantity
$$E(t)=\int_M \left(\frac{\partial \phi_t}{\partial t}- \frac{1}{\Vol_L(M)}\int_M \frac{\partial \phi_t}{\partial t} \omega_t^n\right)^2 \omega_t^n $$
is (at least exponentially fast) decreasing to $0$. The only difference  with the computation in \cite{Cao} is that we need to show that the $\gamma_{ij}(t)$ are uniformly equivalent to  $\gamma_{ij}(0)$. But this is clear because the metrics $g_{i\bar{j}}(t)$ and $g_{i \bar{j}}(0)$ are uniformly equivalent thanks to Theorem \ref{longtime}, and the same happens for their respective volume forms. So the first eigenvalue of the Laplacian $\tilde{\Delta}_t$ is under control. 
\par  Similarly to \cite[Proposition 2.2]{Cao}, we obtain now Theorem \ref{conv}. Note that a consequence of Theorem \ref{longtime}
is the existence of a convergent sequence $v(p,t_n)$ in smooth topology (with $t_n\rightarrow \infty$ when $n\rightarrow \infty$) towards a smooth function  $v_\infty$.

\subsection{Corollaries}
A direct consequence of Theorem \ref{conv} is the convergence of the $\Omega$-K\"ahler flow to the solution of the Calabi conjecture. Actually, the limit $v_\infty$ satisfies 
$$(\omega +\ddbar v_\infty)^n=(\omega +\ddbar \phi_\infty)^n = \Omega.$$
On another words, one can prescribe the volume form in a given K\"ahler class. This was first proved by S-T. Yau in \cite{Yau} and our proof uses essentially the same type of estimates. Of course, if $M$ has trivial first Chern class, then the limit metric is a Calabi-Yau metric. 
\par We also remark that one can modify slightly Equation (\ref{MA2}) if the manifold $M$ has negative first Chern class. 
In that case, it is natural to introduce the following flow:
\begin{equation}
 (\omega+ \ddbar \phi_t)^n =\frac{1}{1-\frac{\partial \phi_t}{\partial t}} e^{f+\phi_t} \omega^n \label{neg-flow}
\end{equation}
where $\omega\in -2\pi c_1(M)>0$, and $f$ is the deviation of the Ricci curvature of $\omega$, that is $Ric(\omega)+ \omega = \ddbar f$ and $\int_M \frac{1}{1-\frac{\partial \phi_t}{\partial t}} e^{f+\phi_t} \omega^n =\Vol_{K_M}(M)$.
In that case similar computations to Section \ref{existence} will involve the operator $\Delta_t - Id$ since
by differentiating (\ref{neg-flow}), one obtains
$$\frac{\partial}{\partial t}\left(\frac{\partial \phi_t}{\partial t} \right)= \frac{e^{f+\phi_t}\omega^n}{\omega_t^n}\left(\Delta_t \left(\frac{\partial \phi_t}{\partial t}\right) -\left(\frac{\partial \phi_t}{\partial t}\right)\right).$$
The uniform bound of the term $\frac{1}{1-\frac{\partial \phi_t}{\partial t}}$ can be proved in a similar way to Section \ref{existence} (Proposition \ref{lemmac0estimate}) and by maximum principle, there is a uniform bound of the potentials $\phi_t$. Thus, one obtains the convergence of $\phi_t$ when $t\rightarrow \infty$ and $\omega+\ddbar \phi_\infty$ is a smooth K\"ahler-Einstein metric with negative curvature.

\section{Proof of Theorem \ref{mainthm}}\label{proof}
In this section $(M,L)$ is a polarized manifold and we are only considering integral K\"ahler classes. The techniques we use in this section to prove Theorem \ref{mainthm} are  inspired from the techniques of \cite{Fi1}. 
\subsection{First order approximation}\label{firstorder}
We know that from any starting point $\omega=\omega_0$, there exists a solution $$\omega_t=\omega+\ddbar \phi_t$$ to the $\Omega$-K\"ahler flow from the results of Section \ref{Kahlerflow}. We can write $\omega_t=c_1(h_t)$ where
$h_t$ is a sequence of hermitian metrics on the line bundle $L$. 
Furthermore, we can construct a natural sequence of Bergman metrics  $$\hat{h}_k(t)=FS(Hilb_{\Omega}(h_t^k))^{1/k}$$ by pulling back the Fubini-Study metric using sections which are orthonormal with respect to the inner product $$\frac{1}{k^n}\int_M h_t(.,.)^k\Omega.$$
Using  Proposition \ref{tian-b} we obtain the asymptotic behavior $$\hat{h}_k(t)=\left(\frac{k^n c_1(h_t)^n}{\Omega}+O\left(\frac{1}{k}\right)\right)^{1/k}h_t$$ for $k>>1$. Thus, the sequence  $\hat{h}_k(t)$  converges to $h_t$ as $k\to \infty$. 
\par On the other hand, the rescaled $\Omega$-balancing flow provides a sequence of metrics $\omega_k(t)=c_1(h_k(t))$ which are solutions to (\ref{resbalflow}). Note that by construction, we fix $h_k(0)=\hat{h}_k(0)$ for the starting point of the rescaled $\Omega$-balancing flow.
\medskip 
\par In this section, we wish to evaluate the distance between the two metrics $h_k(t)$ and $\hat{h}_k(t)$. Since we are dealing with algebraic metrics,  we have the (rescaled) metric on Hermitian matrices given by $$d_k(H_0,H_1)=\left(\frac{\tr \;(H_0-H_1)^2}{k^2}\right)^{1/2}$$ on $Met(H^0(L^k))$ which induces a metric on $Met(L)$,  that we denote by $\mathrm{dist}_k$. 

\begin{proposition}\label{step0}
One has $$\mathrm{dist}_k(h_k(t),\hat{h}_k(t))\leq \frac{C}{k},$$ for some constant $C>0$ independent of $k$. \label{distance}
\end{proposition}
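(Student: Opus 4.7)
\noindent\emph{Plan.} Both $h_k(t)$ and $\hat h_k(t)$ lie in the Bergman space $\mathcal{B}_k = GL(N+1)/U(N+1)$ and agree at $t=0$ by construction. The natural strategy is to show that $\hat h_k(t)$ satisfies the rescaled $\Omega$-balancing equation only up to an error of order $1/k$, and then run a Gronwall-type argument on $\delta_k(t):=\mathrm{dist}_k(h_k(t),\hat h_k(t))$.

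\emph{Step 1 (velocity matching).} The rescaled $\Omega$-balancing flow (\ref{resbalflow}) corresponds, on $\mathcal{B}_k$, to the nonlinear vector field whose induced infinitesimal change on $\tfrac{1}{k}\iota^*\omega_{FS}$ is exactly the potential $\beta_k = -\tr(\mu^0_\Omega\mu)$ from (\ref{potential}). Proposition \ref{conv0} applied to the sequence $\hat h_k(t)$ gives $\beta_k(\hat h_k(t)) \to 1-\Omega/\omega_t^n$ in $C^\infty$ at rate $O(1/k)$. On the other hand, Proposition \ref{c-1-conv} says that the \emph{actual} velocity $\partial \hat h_k/\partial t$ converges in $C^\infty$ to $\partial h_t/\partial t$ at rate $O(1/k)$, and the $\Omega$-K\"ahler flow equation (\ref{MA}) identifies this velocity with the very same potential $1-\Omega/\omega_t^n$. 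Combining the two statements, $\hat h_k(t)$ satisfies the rescaled balancing ODE modulo an error $E_k(t)$ of size $O(1/k)$ in the $d_k$-metric, with all constants uniform in $t$ on any finite interval thanks to the $C^\infty$ time-uniform bounds on $h_t$ provided by Theorem \ref{longtime}.

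\emph{Step 2 (Gronwall).} Subtracting the two ODEs along the trajectories yields
\[
\frac{d\delta_k}{dt}(t) \;\leq\; \bigl\| V_k(h_k(t))-V_k(\hat h_k(t))\bigr\|_{d_k} \,+\, \bigl\| E_k(t)\bigr\|_{d_k} \;\leq\; L_k\,\delta_k(t) \,+\, \frac{C}{k},
\]
where $V_k:=-k\mu^0_\Omega$ and $L_k$ is a local Lipschitz constant for $V_k$ in the $d_k$-metric. Since $\delta_k(0)=0$, Gronwall gives $\delta_k(t)\leq (C/k)(e^{L_k t}-1)/L_k$, which is $O(1/k)$ on compact time intervals provided $L_k$ can be bounded independently of $k$.

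\emph{Main obstacle.} The delicate point is precisely the uniform Lipschitz bound on $V_k$. The explicit factor of $k$ in $V_k = -k\mu^0_\Omega$ is a priori dangerous, but is compensated by the decay of the linearization $dV_k$: the derivative of $\mu^0_\Omega$ at a smoothly bounded embedding acts, after identifying tangent vectors to $\mathcal{B}_k$ with smooth functions on $M$, as $1/k$ times a second-order operator of Laplacian type --- this is the essential content of Theorem \ref{quantlaplacien}. Thus $L_k = O(1)$ uniformly in $k$ so long as both $h_k(t)$ and $\hat h_k(t)$ stay in a smoothly controlled neighbourhood of the curve $h_t$, which is enforced by a standard continuity argument once the Lipschitz bound is in place. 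Making this cancellation precise, with constants quantitatively uniform in $k$ along the trajectories of both flows, is the technical heart of the proof and relies on the uniformity statements in Proposition \ref{tian-b} and Theorem \ref{quantlaplacien}.
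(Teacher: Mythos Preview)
Your Step~1 is exactly the computation the paper performs: comparing the tangent $\hat U_{\alpha\beta}(t)$ to the image of the K\"ahler flow with the tangent $U_{\alpha\beta}(t)$ to the balancing flow \emph{at the same point} $\hat h_k(t)$, and using Proposition~\ref{tian-b} together with Theorem~\ref{quantlaplacien} to see that their $d_k$-distance is $O(1/k)$. So far so good.

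The gap is in Step~2. Your Gronwall scheme requires a uniform bound $L_k=O(1)$ on the (two-sided) Lipschitz constant of $V_k=-k\mu^0_\Omega$ in the $d_k$-metric, and the justification you offer via Theorem~\ref{quantlaplacien} does not deliver this. Scaling the Hilbert--Schmidt norm by $1/k$ does not change Lipschitz constants; hence by Lemma~\ref{l3} one only gets
\[
\|dV_k(A)\|_{d_k}=\|d\mu^0_\Omega(A)\|\le 2\|A\|\,\|\mu_\Omega\|_{op}=2k\,\|A\|_{d_k}\,\|\mu_\Omega\|_{op},
\]
so the naive bound is $L_k=O(k)$, which destroys the Gronwall conclusion. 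Your heuristic ``$d\mu^0_\Omega\sim \tfrac{1}{k}\Delta$'' does not help either: the linearisation of the limiting flow in potentials is $\tfrac{\Omega}{\omega_t^n}\Delta_t$ (a genuine second-order operator, not $1/k$ times one), and on the finite-dimensional Bergman space the Laplacian has eigenvalues up to order $k$, so the cancellation you are hoping for does not occur at the level of a two-sided Lipschitz bound.

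What the paper actually uses---and what would repair your argument---is a \emph{one-sided} Lipschitz condition: the balancing flow is the downward gradient flow of the Kempf--Ness functional (the integral of the moment map) on the nonpositively curved symmetric space $\mathcal{B}_k$, and this functional is geodesically convex. Consequently the flow is distance non-increasing, i.e.\ the one-sided Lipschitz constant is $\le 0$ uniformly in $k$. This is the content of the paper's appeal to the convergence of the gradient flow to the unique $\Omega$-balanced metric. With this in hand, comparing the true balancing trajectory through $\hat h_k(t_0)$ with $h_k(t)$ (contraction) and with $\hat h_k(t)$ (your Step~1 velocity error) gives $\delta_k'(t)\le C/k$ and hence $\delta_k(t)\le Ct/k$ on compact time intervals. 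Replace your two-sided Lipschitz claim by this contraction property and the argument goes through.
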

\begin{proof}
The proof is similar to \cite[Proposition 10]{Fi1}. Let us consider $e^{\phi(t)}h_0$ a family of hermitian metrics with positive curvature,  and denote $$\omega_t=c_1(e^{\phi(t)}h_0).$$ The infinitesimal change at $t$ in the $L^2$ inner product induced by this path and the volume form $\Omega$ is given by 
$$\hat{U}_{\alpha,\beta}(t)=\frac{1}{k^n}\int_M \langle s_\alpha, s_\beta\rangle\, k\dot{\phi}(t)\;\Omega$$
for $(s_\alpha)$ an orthonormal basis of $H^0(L^k)$ with respect to the $L^2$-inner product $$\frac{1}{k^n}\int_M e^{k\phi(t)}e^{k\phi(t)} \Omega^n.$$ The formula is obtained by noticing that the variation occurs with respect to the fibrewise metric. 
 Now, if furthermore $\phi(t)$ is a solution to the $\Omega$-K\"ahler flow, this infinitesimal change  is given at $\hat{h}_k(t)$ as
\begin{equation}\label{tgt1}
\hat{U}_{\alpha,\beta}(t)=\frac{1}{k^n}\int_M \langle s_\alpha, s_\beta\rangle \left(k\left(1-\frac{\Omega}{\omega_t^n} \right)\right)\;\Omega,
\end{equation}
with $(s_\alpha)$ satisfy the same assumption as above. 
\par  On another hand, the tangent (at the same point $\hat{h}_k(t)$) to the rescaled $\Omega$-balancing flow (\ref{resbalflow}) is given by directly by the moment map $\mu^0_\Omega$, and we write the infinitesimal change of the $L^2$ metric as 
\begin{equation}\label{tgt2}
 {U}_{\alpha, \beta}(t) = k\int_M \left(\frac{\delta_{\alpha\beta}}{N+1}- \frac{\langle s_\alpha, s_\beta \rangle}{\sum_{i=1}^{N+1} |s_i|^2 }\right) \Omega,
 \end{equation}
where $s_i$ are $L^2$ orthonormal with respect to the $L^2$ inner product induced by $h(t)^k$ and $\Omega$. Again, from Proposition \ref{tian-b}, one has asymptotically $${U}_{\alpha, \beta}(t)  =\hat{U}_{\alpha,\beta}(t) + \frac{1}{k^n}\int_M \langle s_\alpha, s_\beta \rangle O(1) \;\Omega.$$ 
Here the term $O(1)$ stands implicitly for a (smooth) function which is bounded independently of the variables $t$ and $k$. 
Thus, one has $$\frac{\tr \;(\hat{U}_{\alpha, \beta}(t) - U_{\alpha,\beta}(t)) ^2}{k^2} = \big\langle \frac{1}{k}O(1) , Q_k\left( \frac{1}{k}O(1)\right) \big\rangle_{L^2}.$$
We can use Theorem \ref{quantlaplacien}, Inequality (\ref{ineq11}) to obtain that $$\frac{\tr \;(\hat{U}_{\alpha, \beta}(t) - U_{\alpha,\beta}(t)) ^2}{k^2} =O(k^{-2}).$$ This shows that $d_k(\hat{U}_{\alpha, \beta}(t), U_{\alpha,\beta}(t)) )=O(1/k).$ 
If we denote by $\tilde{h}_k(t)$ the rescaled balancing flow passing through $\hat{h}_k(t_0)$ at $t=t_0$, we have just proved that 
$\tilde{h}_k(t)$ and $\hat{h}_k(t)$ are tangent up to an error term in $O(1/k)$ at $t=t_0$. On the other hand, it is clear that
 $\tilde{h}_k(t)$ and $h_k(t)$ are close when $t\rightarrow \infty$, because they are  obtained through the gradient flow of the same moment map and this gradient flow converges to the unique $\Omega$-balanced metric (this is a consequence of \cite{D3}). Thus $\mathrm{dist} (\tilde{h}_k(t),h_k(t))=O(1/k)$. This finally proves the result.
\end{proof}

\subsection{Higher order approximations}
In this section, we improve the result of the last section by constructing a new time-dependent function $$\psi(k,t)=\phi_t+\sum_{j=1}^m \frac{1}{k^j}\eta_j(t)$$ which is obtained by deforming the solution to
the $\Omega$-K\"ahler flow and which satisfies the property to be ``as close'' as we wish to the $\Omega$-Balancing flow. We will need to compare this metric to the Bergman metric $h_k(t)$. Thus,
we introduce the Bergman metric associated to $h_0e^{\psi(k,t)}$, i.e 
$$\overline{h}_k(t)=FS(Hilb_{\Omega}(h_0^ke^{k\psi(k,t)}))^{1/k}.$$
We wish to minimize the quantity $$\mathrm{dist}_k(\overline{h}_k(t),{h}_k(t))$$
by showing an estimate of the form $\mathrm{dist}_k(\overline{h}_k(t),{h}_k(t))<C/k^{m+1}$, with $C>0$ a constant independent of $k>>0$ and $t$. This is the parameter version of \cite[Theorem 26]{D1}, and Proposition \ref{step0} shows that the result holds for $m=0$. One needs to choose inductively the functions $\eta_j$ and this is done by linearizing the Monge-Amp\`ere operator. 
\par Let us give some details for the first step of the induction, that is to find $\eta_1$. Consider the tangent to the path $\overline{h}_k(t)$, 
then similarly to (\ref{tgt1}), this tangent can be written as  
$$\overline{T}_{\alpha\beta}(t)= \frac{1}{k^n}\int_M k \langle s_\alpha,s_\beta\rangle \left( 1 -\frac{\Omega}{\omega_t^n}+\frac{\dot{\eta_1}}{k}+O(1/k)\right) \; \Omega,$$
where $\omega_t=\omega+\ddbar \phi_t$ and $(s_\alpha)$ is $L^2$ orthonormal with respect to $e^{\phi_t}h_0$ and the volume form $\Omega$. On another hand, the tangent to the rescaled balancing flow at the point $\overline{h}_k(t)$ is given, similarly to (\ref{tgt2})  by 
\begin{equation} \label{eq11}
 {T}_{\alpha\beta}(t)=\frac{1}{k^n}\int_M k\langle s_\alpha,s_\beta\rangle \left(1 -\frac{\Omega}{c_1(\overline{h}_k(t))^n} +O(1/k)\right)\;\Omega.
\end{equation}
But now, $$\frac{\Omega}{c_1(\overline{h}_k(t))^n}=\frac{\Omega}{\omega_t^n} -\frac{\Omega}{\omega_t^n}\Delta_t \left(\frac{1}{k}\eta_1\right)+O(1/k^2)$$ and we can write the error term $kO(1/k)$ from (\ref{eq11}) as $$kO(1/k)=\sum_{i\geq 0} \gamma_{1,i} k^{-i}= \gamma_{1,0} + O(1/k)$$ for $\gamma_{1,i}$ smooth functions with real values depending on the metric and obtained from the Bergman function asymptotics, so 
$${T}_{\alpha\beta}(t)=\frac{1}{k^n}\int_M \langle s_\alpha,s_\beta\rangle \left(k\left( 1 - \frac{\Omega}{\omega_t^n} +\frac{\Omega}{\omega_t^n}\Delta_t  \left(\frac{1}{k}\eta_1\right)+\frac{\gamma_{1,0}}{k}\right)+O(1/k)\right)\Omega.$$
If we wish to force $d_k(\overline{T}_{\alpha\beta}(t),{T}_{\alpha\beta}(t))$ to be of size $O(1/k^2)$, we need to find $\eta_1$ such that
\begin{equation}\label{linear}
\frac{\partial\eta_1(t)}{\partial t}-\frac{\Omega}{\omega_t^n}\Delta_t\eta_1(t) =\gamma_{1,0}  
\end{equation}
for all $t\geq 0$ and $\eta_1(0)=0$. But, by the standard parabolic theory (see, e.g., \cite[Section 3.1]{Baker} for a detailed exposition), a smooth solution $\eta_1$ to the above initial-value problem exists and is unique.
Then, one obtains $$\frac{\tr\;(\overline{T}_{\alpha\beta}(t)- {T}_{\alpha\beta}(t))^2}{k^2}=\langle O(1/k^2), Q_k (O(1/k^2)) \rangle_{L^2}$$
 and we can conclude with similar arguments to Section \ref{firstorder}: there exists a constant $C>0$ independent of $t$ such that
\begin{equation}
 \frac{\tr\;(\overline{T}_{\alpha\beta}(t)- {T}_{\alpha\beta}(t))^2}{k^2}\leq \frac{C}{k^4}. \label{estimate1}
\end{equation}
 This implies, by the same arguments as in the end of the proof of Proposition \ref{distance}, that $$\mathrm{dist}_k(h_k(t),\overline{h}_k(t))\leq \frac{C}{k^2}.$$
Now, for higher order expansions, one considers higher order asymptotics in the expressions above. The same reasoning can be applied for the construction of higher order approximation. It will involve, knowing the terms $\eta_1,..,\eta_{m}$ to find $\eta_{m+1}$, solution of a similar equation to (\ref{linear}), where the (non constant) R.H.S will depend on the functions 
$\eta_j$ ($1\leq j\leq m$) computed at previous step:
\begin{equation}\label{linear2}
\frac{\partial\eta_{m+1}(t)}{\partial t}-\frac{\Omega}{\omega_t^n}\Delta_t\eta_{m+1}(t) =\gamma_{m+1,0}(\eta_1,...,\eta_m).  
\end{equation}
Again, it is possible to solve (\ref{linear2}) by inverting the operator $\frac{\Omega}{\omega_t^n}\Delta_t -\frac{\partial}{\partial t}$.
Finally, we have obtained 
\begin{theorem} \label{highapprox}
Given  solution $\phi_t$ to the $\Omega$-K\"ahler flow (\ref{MA}) and $k>>0$, there exist  functions $\eta_1,...,\eta_m$, $m\geq 1$, such that the deformation of $\phi_t$ given by the potential
$$\psi(k,t)=\phi_t+\sum_{j=1}^m \frac{1}{k^j}\eta_j(t)$$
satisfies
$$\mathrm{dist}_k(h_k(t),\overline{h}_k(t))\leq \frac{C}{k^{m+1}}.$$
Here $\overline{h}_k(t)=FS(Hilb_{\Omega}(h_0^ke^{k\psi(k,t)}))^{1/k}\in Met(L)$ is the induced Bergman metric from the potential $\psi$, $h_k(t)\in Met(L)$ is the sequence of metric obtained by the rescaled balancing flow (\ref{resbalflow}), and $C$ is a positive constant independent of $k$ and $t$. 
\end{theorem}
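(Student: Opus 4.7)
The plan is to proceed by induction on $m$, with the base case $m=0$ being precisely Proposition~\ref{step0} (there $\overline{h}_k(t)$ reduces to $\hat{h}_k(t)$). Assume inductively that $\eta_1,\ldots,\eta_m$ have been constructed so that, for the associated Bergman path, $\mathrm{dist}_k(h_k(t),\overline{h}_k(t))=O(k^{-(m+1)})$. I would then construct $\eta_{m+1}$ by matching, at each point $\overline{h}_k(t)$, the tangent vector of the path $t\mapsto \overline{h}_k(t)$ with that of the rescaled $\Omega$-balancing flow passing through the same point, to one order higher in $1/k$ in the $d_k$-metric.

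Concretely, the tangent to the Bergman path is given, generalizing (\ref{tgt1}), by $\overline{T}_{\alpha\beta}(t)=k^{-n}\int_M k\langle s_\alpha,s_\beta\rangle\,\dot{\psi}(k,t)\,\Omega$, whose kernel-side factor reads $k(1-\Omega/\omega_t^n)+\dot\eta_1+k^{-1}\dot\eta_2+\cdots+k^{-m}\dot\eta_{m+1}$. For the balancing-flow tangent $T_{\alpha\beta}(t)$, I would expand $\Omega/c_1(\overline{h}_k(t))^n$ in powers of $k^{-1}$ using the linearization of the Monge-Amp\`ere operator: at order $k^{-j}$ this contains the term $(\Omega/\omega_t^n)\Delta_t\eta_j$ plus corrections depending only on $\eta_1,\ldots,\eta_{j-1}$ and on the universal Bergman-kernel coefficients $a_i$ of Proposition~\ref{tian-b}. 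Collecting the order-$k^{-(m+1)}$ piece of the discrepancy $\overline{T}-T$ yields a smooth function of the form
\[
\bigl(\partial_t-(\Omega/\omega_t^n)\Delta_t\bigr)\eta_{m+1}\;-\;\gamma_{m+1,0}(\eta_1,\ldots,\eta_m),
\]
times the universal factor $k^{-n}\langle s_\alpha,s_\beta\rangle$. Setting this to zero leads to the linear parabolic equation (\ref{linear2}) with $\eta_{m+1}(0)=0$, whose smooth global solution is produced by standard linear parabolic theory, with bounds uniform on $[0,\infty)$ since $\{\omega_t\}$ stays in a $C^\infty$-compact family by Theorem~\ref{mainthm2}.

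With this choice, the residual discrepancy has the form $\overline{T}_{\alpha\beta}(t)-T_{\alpha\beta}(t)=k^{-n}\int_M\langle s_\alpha,s_\beta\rangle\cdot O(k^{-(m+1)})\,\Omega$ with uniform constants, so Theorem~\ref{quantlaplacien} and (\ref{ineq11}) give $d_k(\overline{T},T)^2=\langle O(k^{-(m+2)}),Q_k(O(k^{-(m+2)}))\rangle_{L^2}=O(k^{-2(m+2)})$, exactly as in (\ref{estimate1}). Finally, I would upgrade this tangent estimate to the desired distance estimate by the same mechanism used at the end of the proof of Proposition~\ref{step0}: if $\tilde h_k(\tau)$ denotes the rescaled $\Omega$-balancing flow issued from $\overline{h}_k(t)$, then the above tangent bound shows $\overline{h}_k$ and $\tilde h_k$ agree up to $O(k^{-(m+2)})$, while $\tilde h_k(\tau)$ and $h_k(\tau)$ both converge to the unique $\Omega$-balanced metric as $\tau\to\infty$ by the attracting-fixed-point property of $T_k$ proved in \cite{D3}, so a Gronwall-type argument yields $\mathrm{dist}_k(h_k(t),\overline{h}_k(t))\leq C k^{-(m+2)}$.

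The main obstacle is the bookkeeping at each inductive step: one must verify that $\gamma_{m+1,0}$ really is a smooth function of $\eta_1,\ldots,\eta_m$ (and of finitely many of their covariant derivatives) whose $C^r$ norms remain uniformly bounded in $t$. This uniformity-in-$t$ is essential for the whole argument and rests on three inputs combined: the $C^\infty$-compactness of $\{\omega_t\}_{t\ge 0}$ from Theorem~\ref{mainthm2}, the uniformity clause in Proposition~\ref{tian-b} and Theorem~\ref{quantlaplacien} when the metric varies in such a compact family, and the uniformity in $k$ of the contraction rate of the balancing flow near its fixed point (\cite{D3}). A secondary delicate point is the passage from the tangent estimate to the $\mathrm{dist}_k$ estimate with constants independent of both $t$ and $k$, which is precisely where the global attractiveness of the $\Omega$-balanced metric enters the argument.
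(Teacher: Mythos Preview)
Your proposal is correct and follows essentially the same approach as the paper: induction on $m$ with base case Proposition~\ref{step0}, matching the tangent $\overline{T}$ of the Bergman path against the balancing-flow tangent $T$ at $\overline{h}_k(t)$ order by order in $1/k$, extracting the linear parabolic equation~(\ref{linear2}) for $\eta_{m+1}$, solving it by standard theory, and then converting the tangent estimate into a $\mathrm{dist}_k$ estimate via the same mechanism as at the end of Proposition~\ref{distance}. Your explicit identification of the two delicate points---the uniform-in-$t$ smoothness of $\gamma_{m+1,0}$ (resting on Theorem~\ref{mainthm2} and the uniformity clauses in Proposition~\ref{tian-b} and Theorem~\ref{quantlaplacien}) and the passage from tangent to distance estimate---matches exactly what the paper singles out in its short proof block, where the only additional content is the remark that $C$ is independent of $t$ because $\{\omega_t\}$ sits in a $C^\infty$-compact set.
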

\begin{proof}
The only point that we did not explain earlier is that $C$ is independent of the variable $t\in \mathbb{R}_+$. This comes from the following facts. On one hand, the expansion of the Bergman function of a family of metrics $h_t$ is uniform if the metrics $h_t$ belong to a compact subset of hermitian positive metrics in $Met(L)$, see Theorem \ref{tian}. On the other hand, we have seen that the metrics involved in the $\Omega$-K\"ahler flow are in a bounded set, since $\omega_t$ is convergent in smooth topology when $t\rightarrow +\infty$ thanks to Theorem \ref{conv}. This completes the proof of Theorem \ref{highapprox}.
\end{proof}
\medskip 

Furthermore, on can improve slightly this result by showing
that one has $C^1$ convergence in $t$.
\begin{proposition}\label{highapproxd}
 Under the same assumptions and notations of the previous theorem, one has
$$\mathrm{dist}_k\left(\frac{\partial h_k(t)}{\partial t},\frac{\partial\overline{h}_k(t)}{\partial t}\right)\leq \frac{C}{k^{m}},$$
where $C$ is a uniform constant in $k$ and $t$.
\end{proposition}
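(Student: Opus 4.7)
I would follow the same pattern as in the proof of Theorem \ref{highapprox}, splitting the difference of tangent vectors by inserting the tangent to the rescaled $\Omega$-balancing flow at the base point $\overline{h}_k(t)$:
$$\frac{\partial h_k(t)}{\partial t} - \frac{\partial \overline{h}_k(t)}{\partial t} = \underbrace{\Bigl(-k\mu^0_\Omega(\iota_{h_k(t)}) + k\mu^0_\Omega(\iota_{\overline{h}_k(t)})\Bigr)}_{\mathcal{A}} + \underbrace{\Bigl(-k\mu^0_\Omega(\iota_{\overline{h}_k(t)}) - \frac{\partial \overline{h}_k(t)}{\partial t}\Bigr)}_{\mathcal{B}}.$$
The term $\mathcal{B}$ is, up to sign, precisely the quantity $T_{\alpha\beta}(t) - \overline{T}_{\alpha\beta}(t)$ that the corrections $\eta_1,\dots,\eta_m$ were designed to control: their recursive construction through equations of the form (\ref{linear2}) cancels successive orders in the Bergman-kernel expansion, forcing $\mathrm{dist}_k(\mathcal{B}) = O(k^{-(m+1)})$ uniformly in $t$, by the smooth compactness of $\{\omega_t\}_{t\geq 0}$ (Theorem \ref{conv}) together with the uniform Bergman-kernel asymptotics of Theorem \ref{tian}, Proposition \ref{tian-b} and Theorem \ref{quantlaplacien}.

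For the term $\mathcal{A}$, the plan is to establish a $k$-uniform Lipschitz estimate for the vector field $h \mapsto k\hh\mu^0_\Omega(\iota_h)$ on $\mathcal{B}_k$ with respect to the $\mathrm{dist}_k$ metric, valid over the compact family of base metrics relevant to the flow. Such an estimate follows by differentiating the integral defining $\mu_\Omega$ along a one-parameter family of metrics and invoking Proposition \ref{tian-b} together with the quantized Laplacian estimates of Theorem \ref{quantlaplacien}; the extra factor of $k$ coming from the rescaling in (\ref{resbalflow}) is absorbed by the order-$k^{-1}$ decay hidden in the directional derivative of $\mu^0_\Omega$, along the same lines as in the proof of Proposition \ref{conv0}. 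Combining this Lipschitz bound with the already-established estimate $\mathrm{dist}_k(h_k(t), \overline{h}_k(t)) \leq C\hh k^{-(m+1)}$ from Theorem \ref{highapprox} yields $\mathrm{dist}_k(\mathcal{A}) \leq C'\hh k^{-m}$.

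Summing the two contributions produces the desired $O(k^{-m})$ bound, with the smaller $O(k^{-(m+1)})$ piece from $\mathcal{B}$ absorbed by the dominant piece from $\mathcal{A}$. Uniformity in $t$ is preserved throughout because every ingredient (the Bergman-kernel expansions, the equation (\ref{linear2}) for the $\eta_j$'s, and the Lipschitz estimate) has constants depending only on a compact family of Kähler metrics, a family which is compact by Theorem \ref{conv}. The main obstacle is the $k$-uniform Lipschitz estimate for the tangent field of the rescaled balancing flow: one must verify that the $k$-speedup in (\ref{resbalflow}) is cleanly compensated by cancellations in the directional derivative of $\mu^0_\Omega$, and this requires repeating the careful leading-order accounting of Proposition \ref{conv0} now for the variation rather than the value of $\mu^0_\Omega$.
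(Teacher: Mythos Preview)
Your decomposition is correct and yields the stated bound, but it is a genuinely different route from the paper's. The paper does \emph{not} insert the balancing-flow tangent at $\overline{h}_k(t)$; instead it differentiates in $t$ the quantity $\overline{T}_{\alpha\beta}(t)-T_{\alpha\beta}(t)$ already estimated in Theorem~\ref{highapprox}, and tracks the extra factor of $k$ produced when $\partial/\partial t$ hits the fibrewise inner product $\langle s_\alpha,s_\beta\rangle$ (their term (\ref{sndt})). That extra $k$ is exactly what costs one power, giving $O(k^{-m})$ instead of $O(k^{-(m+1)})$. Your splitting is more transparent: $\mathcal{B}=T-\overline{T}$ is \emph{literally} the object bounded by $O(k^{-(m+1)})$ in the proof of Theorem~\ref{highapprox}, and $\mathcal{A}$ reduces to a Lipschitz estimate for the moment map combined with the already-known $\mathrm{dist}_k(h_k,\overline{h}_k)\leq C k^{-(m+1)}$.

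One point to tighten. Your claim that the rescaling factor $k$ is ``absorbed by the order-$k^{-1}$ decay hidden in the directional derivative of $\mu^0_\Omega$'' is not quite right, and you do not need it. The cleanest way to control $\mathcal{A}$ is via Lemma~\ref{l3}, which gives $\Vert d\mu_\Omega(A)\Vert\leq 2\Vert A\Vert\,\Vert\mu_\Omega\Vert_{op}$; together with Proposition~\ref{cor-tian} and Proposition~\ref{p1} (to keep $\Vert\mu_\Omega\Vert_{op}$ uniformly bounded along the short geodesic joining $h_k(t)$ to $\overline{h}_k(t)$), this shows that $\mu^0_\Omega$ is Lipschitz with a $k$-independent constant in the $\mathrm{dist}_k$ metric. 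Hence $k\mu^0_\Omega$ has Lipschitz constant $O(k)$, not $O(1)$, and one gets $\mathrm{dist}_k(\mathcal{A})\leq Ck\cdot\mathrm{dist}_k(h_k,\overline{h}_k)\leq C'k^{-m}$, which is precisely the bound you wrote down. So your conclusion stands; only the heuristic for the Lipschitz constant needs adjusting, and the Bergman-kernel machinery you invoke for it can be replaced by the elementary Lemma~\ref{l3}.
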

\begin{proof}
 One needs essentially to give an estimate of the quantity
$${\tr\;\left(\frac{\partial \overline{T}_{\alpha\beta}(t)}{\partial t}- \frac{\partial {T}_{\alpha\beta}(t)}{\partial t}\right)^2}.$$
Let us assume that we have fixed $\eta_1$ as in the proof of the theorem, that is $m=1$. Then, as the first step, we are lead to estimate
\begin{eqnarray}
\frac{1}{k^n}\int_M \langle s_\alpha,s_\beta\rangle \; k\left( \frac{\partial}{\partial t}\left(
\frac{\Omega}{\omega_t^n}\Delta_t (\eta_1/k) - \dot{\eta}_1/k+ \gamma_{1,0}/k+O(1/k^2)\right)\right)\Omega \label{firstt}\\
+
\frac{1}{k^n}\int_M k(k\frac{\partial \phi_t}{\partial t}) \langle s_\alpha,s_\beta\rangle \left( 
\frac{\Omega}{\omega_t^n}\Delta_t (\eta_1/k) - \dot{\eta}_1/k+ \gamma_{1,0}/k+O(1/k^2)\right)\Omega \label{sndt}
\end{eqnarray}
In  (\ref{firstt}), the term $O(1/k^2)$ stands for a smooth function $r(p,k,t)$, where $p\in M$, and is uniformly bounded over $M$ and in the variable $t$. But we know that the asymptotics of the Bergman kernel is given by polynomial expressions of the curvature and its covariant derivative. We can write $r(p,k,t)=\sum_{i\geq 2} \frac{1}{k^i}r_i(p,t)$ where $r_i(p,t)$ are smooth in $t$ and $p$ variables. Thus $\Vert r(p,k,t)\Vert_{C^\infty(M)}<\frac{C_1}{k^2}$ and $\Big\Vert \frac{\partial r(p,k,t)}{\partial t}\Big\Vert_{C^\infty(M)}<\frac{C_2}{k^2}$, where $C_1,C_2$ do not depend on $k$, $t$ and $p$. The independence in the variable $t$ is again obtained from the fact that the metric $\omega_t$ along the $\Omega$-K\"ahler flow is convergent (Theorem \ref{conv}) and the uniformity of the expansion. Moreover, since $\eta_1$ is a smooth solution of (\ref{linear}) in $t$, one gets that the term (\ref{firstt}) is uniformly bounded by $C_3/k^2$ using the same argument as in the proof of Theorem \ref{highapprox}, Inequality (\ref{estimate1}).
\par On another hand, by the same reasoning, (\ref{sndt}) is uniformly bounded by $C_4/k$, where $C_4$ is independent of $t$ and $k$. This provides the result for $m=1$. 
\par The computations for $m>1$ are completely similar. Also, higher order derivatives in $t$ could be treated in a similar way. 
\end{proof}

\subsection{$L^2$ estimates in finite dimensional set-up}
We start this section by fixing some notations and giving some definitions. Let us fix a reference metric $\omega_0\in 2\pi c_1(L)$. We denote $\tilde{\omega}_0=k\omega_0$ the induced metric in $2\pi kc_1(L)$. 
We need the notion of $R$-bounded geometry in $C^r$ \cite[Secion 3.2]{D1}. We say that another metric $\tilde{\omega}\in 2\pi kc_1(L)$ has
$R$-bounded geometry in $C^r$ if $\tilde{\omega}>\frac{1}{R}\tilde{\omega}_0$ and $\Vert \tilde{\omega}-\tilde{\omega}_0 \Vert_{C^r(\tilde{\omega}_0)}<R$. We say that a basis $(s_i)$ of $H^0(M,L^k)$ is $R$-bounded if the Fubini-Study metric induced by 
the embedding of $M$ in $\mathbb{P}H^0(L^k)^\vee$ associated to the $(s_i)$ has $R$-bounded geometry. 
\par The purpose to work with $R$-bounded metric is to avoid constants depending on $k$ in the forthcoming estimates.  
Let us fix $$H_A = \sum_{i,j} A_{ij} (s_i,s_j)=\tr(A\mu)\in C^{\infty}(M),$$
where $A=(A_{ij})$ is a Hermitian matrix, $(s_i)$ is a basis of $H^0(L^k)$, and $(.,.)$ denotes the fibrewise Fubini-Study inner-product induced 
by the basis $(s_i)$. This function corresponds to the potential obtained by an $A$-deformation of the Fubini-Study metric, i.e when one is moving the Fubini-Study metric in an $Lie(SU(N+1))$ orbit.
Moreover, we denote $\Vert A\Vert_{op}=\max \frac{|A\zeta|}{|\zeta|}$ the operator norm, given by the maximum moduli of the eigenvalues of the hermitian matrix $A$, and the Hilbert-Schmidt norm $\Vert A\Vert^2 = tr(A^2)=tr(AA^*)\geq 0$. 
We will need the following result which is very general,

\begin{proposition}[{\cite[Lemma 24]{D1},\cite[Proposition 12]{Fi1}}]\label{HA} There exists $C>0$ independent of $k$, such that for any basis $(s_i)$
 of $H^0(L^k)$ with $R$-bounded geometry in $C^r$ and any hermitian matrix $A$, 
$$\Vert H_A \Vert_{C^r} \leq C \Vert \mu_\Omega(\iota) \Vert_{op}\Vert A \Vert $$ 
where $\iota$ is the embedding induced by $(s_i)$. 
\end{proposition}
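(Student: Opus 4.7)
My plan is to reduce the estimate to a pointwise matrix identity for $\mu(\iota(p))$, then to use the $R$-bounded geometry hypothesis to transfer the $C^0$ estimate to $C^r$ estimates with constants independent of $k$.

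First, I would rewrite $H_A(p)=\operatorname{tr}\bigl(A\cdot\mu(\iota(p))\bigr)$ using formula (\ref{mu}), and note that $\mu(\iota(p))$ is a rank-one hermitian projection, so $\operatorname{tr}\bigl(\mu(\iota(p))^2\bigr)=1$. The Cauchy--Schwarz inequality $|\operatorname{tr}(AB)|\leq\|A\|\,\|B\|$ for the Hilbert--Schmidt norm then yields the pointwise bound $|H_A(p)|\leq\|A\|$, settling the $C^0$ part.

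For $r\geq 1$, I would differentiate $H_A$ repeatedly: each derivative $\nabla^\alpha H_A(p)$ takes the form $\operatorname{tr}\bigl(A\cdot B_\alpha(p)\bigr)$, where $B_\alpha(p)$ is a hermitian-matrix-valued polynomial in the covariant derivatives, up to order $|\alpha|$, of the fibrewise inner products $(s_i,s_j)(p)$ measured in the rescaled metric $\tilde\omega_0=k\omega_0$. The $R$-bounded geometry hypothesis is precisely what ensures that $\|B_\alpha(p)\|$ is bounded, uniformly in $p$ and in $k$, by a constant depending only on $R$ and $|\alpha|$. A second application of Cauchy--Schwarz in the matrix variables then delivers $\|\nabla^\alpha H_A\|_{C^0}\leq C(R,r)\,\|A\|$, and combining this with the $C^0$ bound gives the claimed $C^r$ estimate, with the factor $\|\mu_\Omega(\iota)\|_{op}$ entering as the natural normalisation reconciling the fibrewise inner product used to form $H_A$ with the $L^2$-inner product $Hilb_\Omega$ that defines the basis.

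The main obstacle is uniformity in $k$: direct differentiation against $\omega_0$ would produce $k$-dependent factors coming from the powers of $L$, and the definition of $R$-bounded geometry is calibrated precisely so that, when derivatives are taken in the rescaled metric $\tilde\omega_0$, these factors cancel. I would invoke the pointwise Bergman kernel / peak section estimates of \cite[Section 3.2]{D1}, as is done in the proof of \cite[Proposition 12]{Fi1}, to carry out this cancellation explicitly, ultimately giving a constant $C$ depending only on $R$ and $r$.
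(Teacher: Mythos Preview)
Your route is genuinely different from the paper's, and as written it has two gaps. First, your handling of the factor $\Vert\mu_\Omega(\iota)\Vert_{op}$ is not an argument: the direct differentiation you describe, if it works, produces a bound $\Vert H_A\Vert_{C^r}\leq C(R,r)\Vert A\Vert$ with no $\mu_\Omega$ at all, and the sentence about ``natural normalisation reconciling the fibrewise inner product with $Hilb_\Omega$'' does not correspond to any actual step (nor is the basis $(s_i)$ assumed $Hilb_\Omega$-orthonormal). Second, the claim that $\Vert B_\alpha(p)\Vert_{HS}$ is bounded uniformly in $k$ by $R$-bounded geometry is not immediate: $B_\alpha$ is an $(N+1)\times(N+1)$ matrix with $N\sim k^n$, and $R$-bounded geometry only controls the $C^r$-norm of $\iota^*\omega_{FS}$, not the individual entries $(s_i,s_j)$. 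For $\vert\alpha\vert=1$ one can rescue this by the identity $\Vert d\mu(v)\Vert_{HS}^2=2\vert v\vert_{FS}^2$ on $\mathbb{CP}^N$, but for higher $\alpha$ it needs a real argument, and it is not what is done in \cite[Proposition 12]{Fi1}, contrary to what you suggest.

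The paper's proof (which \emph{is} the argument of \cite[Lemma 24]{D1} and \cite[Proposition 12]{Fi1}) proceeds differently. One forms the holomorphic section $\sigma_A=\sum A_{ij}\,s_i\otimes\tilde s_j$ of $L^k\boxtimes\bar L^k$ over $M\times\bar M$ and observes that its $L^2$-norm with respect to $\Omega\times\Omega$ is $\tr(A\mu_\Omega\mu_\Omega^*A^*)^{1/2}\leq\Vert\mu_\Omega(\iota)\Vert_{op}\Vert A\Vert$; this matrix inequality is exactly where $\Vert\mu_\Omega\Vert_{op}$ enters. One then applies the elliptic $L^2\to C^r$ estimate for holomorphic sections on $M\times\bar M$, restricted to the diagonal copy of $M$, where $R$-bounded geometry makes the elliptic constant uniform in $k$. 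Holomorphicity of the $s_i$ is essential in this step and is not used in your approach.
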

\begin{proof}
By definition, $\mu_\Omega(\iota)=\int_M  \mu(\iota) \Omega$. Given a holomorphic section $s$ of $L\rightarrow M$, one 
defines a holomorphic section $\tilde{s}$ of $\bar{L}^*\rightarrow \bar M$ (here $\bar M$ is just $M$ with the opposite complex structure) thanks to the bundle isomorphism given by the fiber metric. Then, for the hermitian matrix $A$, one can define the section $\sigma_A = \sum A_{ij} s_i \otimes \tilde{s_j}$ and compute its $L^2$ norm over $M \times \bar{M}$. This $L^2$ norm is given by $tr(A \mu_\Omega\mu_\Omega^* A^*)^{1/2}$. But one has an obvious upper bound for that term, by a standard inequality: for hermitian matrices $G$,$F$, $\tr(FGF) \leq \Vert F \Vert ^2 \Vert G \Vert_{op}$. Thus,
\begin{equation}
\Vert \sigma_A \Vert = tr(A \mu_\Omega\mu_\Omega^* A^*)^{1/2}\leq   \Vert \mu_\Omega(\iota) \Vert_{op}\Vert A \Vert.\label{inter} 
\end{equation}
 On another hand, for any holomorphic section $\sigma$ of a hermitian vector bundle $\tilde{L}\rightarrow Y$, one has the $L^2$ estimate, $\Vert \sigma \Vert_{C^r(Y')} \leq C\Vert \sigma \Vert_{L^2(Y)}$ for  a submanifold $Y'\subset Y$ and some constant $C$ that depends on $Y$. This is described in \cite[Lemma 24]{D1}. Hence, applying this result with $Y=M\times \bar M$ and $Y'=M$, together with (\ref{inter}), one obtains the expected inequality.
\end{proof}

We will need the following lemma.
\begin{lemma}\label{l5}
Let us fix $r\geq 2$. Assume that for all $t\in [0,T]$, the family of basis $\{s_i\}(t)$ of $H^0(L^k)$ have $R$-bounded geometry. Let us define by $h(t)$ the family of Bergman metrics induced by $\{s_i\}(t)$. Then
the induced family of Fubini-Study metrics $\tilde{\omega}(t)$ satisfy 
$$\Vert \tilde{\omega}(0)-\tilde{\omega}(T)\Vert_{C^{r-2}}< C \sup_{t}\Vert \mu_\Omega(\iota(t))\Vert_{op}\int_0^T \mathrm{dist}(h(s),h(0))ds,$$
and also
\begin{eqnarray*}
 \Big\Vert \frac{\partial\tilde{\omega}}{\partial t}(0)-\frac{\partial \tilde{\omega}}{\partial t} (T)\Big\Vert_{C^{r-2}}&<& C^* \sup_t \Vert \mu_\Omega(\iota(t))\Vert_{op}\int_0^T \mathrm{dist}(\frac{\partial h}{\partial s}(s),\frac{\partial h}{\partial s}(0)) ds \\
&&+ C^* \sup_t  \Vert d\mu_\Omega(\iota(t))\Vert_{op}\int_0^T \mathrm{dist}(h(s),h(0)) ds,
\end{eqnarray*}
where $C,C^*$ are uniform constants in $k$.
\end{lemma}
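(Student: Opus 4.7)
The strategy is to combine the Fundamental Theorem of Calculus in the $t$-variable with Proposition \ref{HA} applied pointwise. First, I identify the tangent vector to the Bergman path $h(t)$: since $h(t)$ lies in the symmetric space $\mathcal{B}_k=GL(N+1)/U(N+1)$, its tangent at time $t$ is represented by a Hermitian matrix $A(t)$, and the associated infinitesimal change of the pulled-back Fubini--Study form is
$$\partial_t\tilde\omega(t)=\ddbar H_{A(t)}(t),\qquad H_A(p)=\tr\bigl(A\,\mu(\iota(t))(p)\bigr),$$
exactly as in the derivation preceding Proposition \ref{HA}. Integrating between $0$ and $T$ and absorbing the two derivatives coming from $\ddbar$ into the loss of two orders of regularity (standard elliptic/interpolation estimates, justified here by the $R$-bounded geometry hypothesis on the bases), I would obtain
$$\Vert\tilde\omega(T)-\tilde\omega(0)\Vert_{C^{r-2}}\leq C\int_0^T\Vert H_{A(s)}\Vert_{C^r}\,ds.$$

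Next, applying Proposition \ref{HA} pointwise in $s$ (whose constant is uniform in $s$ and $k$ thanks to $R$-boundedness) gives
$$\Vert H_{A(s)}\Vert_{C^r}\leq C'\Vert\mu_\Omega(\iota(s))\Vert_{op}\,\Vert A(s)\Vert.$$
The key remaining point is to replace the instantaneous velocity $\Vert A(s)\Vert$ by the displacement $\mathrm{dist}(h(s),h(0))$. I would do this by reparametrization: for each fixed $s\in[0,T]$, apply the same identity along the straight segment in $\mathcal{B}_k$ joining $h(0)$ to $h(s)$, whose tangent is the constant Hermitian matrix $H(s)-H(0)$ of norm controlled by the Bergman distance $\mathrm{dist}(h(s),h(0))$. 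This trick, which is really the place where the geometry of the symmetric space $\mathcal{B}_k$ enters, produces the first estimate after integration in $s$.

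For the second estimate I would differentiate $\partial_t\tilde\omega(t)=\ddbar H_{A(t)}(t)$ once more in $t$. The product rule yields
$$\partial_t^2\tilde\omega(t)=\ddbar\bigl[\tr\bigl(\dot A(t)\,\mu(t)\bigr)+\tr\bigl(A(t)\,d\mu(\iota(t))\cdot\dot\iota(t)\bigr)\bigr].$$
The first contribution is bounded by Proposition \ref{HA} (applied to $\dot A$) and the reparametrization trick applied now to the family $\dot h$, producing the term $\sup_t\Vert\mu_\Omega(\iota(t))\Vert_{op}\cdot\mathrm{dist}\bigl(\partial_sh,\partial_sh(0)\bigr)$. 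The second contribution is bounded by the analogue of Proposition \ref{HA} for $d\mu_\Omega$, producing the term $\sup_t\Vert d\mu_\Omega(\iota(t))\Vert_{op}\cdot\mathrm{dist}(h(s),h(0))$. Integration in $s$ over $[0,T]$ then yields the second estimate.

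The principal technical obstacle will be confirming that the constants arising from the reparametrization and from the operator norm estimates are genuinely uniform in $k$; this is exactly the role played by the $R$-bounded geometry assumption, combined (in the second estimate) with the uniformity in the asymptotic expansion of the Bergman kernel (Theorem \ref{tian}), which guarantees that the extra operator norms appearing in the chain-rule step do not blow up as $k\to\infty$.
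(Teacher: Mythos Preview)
Your core argument is exactly the paper's: write $\partial_t\tilde\omega=\ddbar H_{A(t)}$ (respectively $\partial^2_t\tilde\omega=\ddbar\,\partial_tH_{A(t)}$), absorb $\ddbar$ into the loss of two derivatives, apply Proposition~\ref{HA} pointwise in $t$, and for the second inequality split $\partial_tH_{A(t)}=\tr(\dot A\,\mu)+\tr(A\,\dot\mu)$ via the product rule, bounding the two pieces by $C\Vert\mu_\Omega\Vert_{op}\Vert\dot A\Vert$ and $C'\Vert d\mu_\Omega\Vert_{op}\Vert A\Vert$ respectively. The paper quotes Fine's Lemma~13 for the first inequality and carries out precisely this computation for the second, concluding with ``then by integration, one obtains the expected estimate.''

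Your ``reparametrization trick,'' however, is superfluous and not well-formed as you describe it. After integration the paper's bound already reads $\int_0^T\Vert\dot A(t)\Vert\,dt$ and $\int_0^T\Vert A(t)\Vert\,dt$, with $A(t)$ the tangent matrix to the path; there is no further step. The symbol $\mathrm{dist}$ in the statement should be read in this light---in the only application (the geodesic segment used in the proof of Theorem~\ref{mainthm}) the tangent norm and the displacement coincide up to a harmless constant, so the distinction is immaterial. Your proposed manoeuvre of passing, for each fixed $s$, to the straight segment from $h(0)$ to $h(s)$ and then ``integrating in $s$'' does not produce the claimed integrand and is not what the paper does. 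One last remark: Theorem~\ref{tian} is not invoked in this lemma; the uniformity in $k$ comes entirely from the $R$-bounded geometry hypothesis feeding into Proposition~\ref{HA} (and its adaptation to $d\mu_\Omega$).
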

\begin{proof}
Thanks to \cite[Lemma 13]{Fi1}, we just need to check the second inequality. For the deformation $A(t)$ of the $L^2$ metric induced along the path from 0 to $T$, one has
\begin{eqnarray}
\Big\Vert \frac{\partial^2 \tilde{\omega}(t)}{\partial t^2}\Big\Vert_{C^{r-2}}&=&\Big\Vert \sqrt{-1}\partial\bar\partial \frac{\partial}{\partial t} H_{A(t)}\Big\Vert_{C^{r-2}}\\
&\leq &\Vert \partial \bar \partial \tr(\dot{A}(t)\mu)\Vert_{C^{r-2}} +\Vert \partial \bar \partial \tr({A}(t)\dot{\mu}(\iota_t))\Vert_{C^{r-2}} \label{est3}.
\end{eqnarray}
The first term of (\ref{est3}) can be bounded from above by  $C\Vert \mu_\Omega(\iota(t))\Vert_{op} \Vert \dot{A}(t)\Vert$  using directly Proposition \ref{HA}. For the second term, one needs to
adapt the proof of Proposition \ref{HA}, but this can be done with no major difficulty. Hence, the second term of (\ref{est3}) can be bounded from above by 
\begin{eqnarray*}
\Vert \partial \bar \partial \tr({A}(t)\dot{\mu}(\iota_t))\Vert_{C^{r-2}}&\leq &C\Big\Vert \int_M \dot{\mu}(\iota(t))\Omega\Big\Vert_{op} \Vert A(t) \Vert \\
&\leq& C' \Vert d\mu_{\Omega}(\iota(t))\Vert_{op} \Vert A(t) \Vert.
 \end{eqnarray*}

Then by integration, one obtains the expected estimate.  
\end{proof}

\begin{corollary}\label{R-bounded}
Let $\tilde{\omega}_k$ be a sequence of metrics with $R/2$-bounded geometry in $C^{r+2}$ such that the norms $\Vert \mu_{\Omega}(\tilde{\omega}_k)\Vert_{op}$ are uniformly bounded. Then, there is a constant $C>0$ independent of $k$ such that if
 $\tilde{\omega}$ has $\mathrm{dist}_k(\tilde{\omega}, \tilde{\omega}_k) <  C$, then $\tilde{\omega}$ has $R$-bounded geometry in $C^r$. 
\end{corollary}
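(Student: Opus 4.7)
The plan is a standard open--closed continuity argument along the straight-line Bergman-space path joining $\tilde\omega_k$ to $\tilde\omega$. Let $H_0,H_1\in Met(H^0(L^k))$ be the Hilbert points corresponding to $\tilde\omega_k$ and $\tilde\omega$, set $H(t)=(1-t)H_0+tH_1$, and let $\tilde\omega(t)=\omega_{FS}(H(t))$ so that $\tilde\omega(0)=\tilde\omega_k$, $\tilde\omega(1)=\tilde\omega$, and the $\mathrm{dist}_k$-length of the path equals $\mathrm{dist}_k(\tilde\omega,\tilde\omega_k)<C$. Set $M:=\sup_k\Vert\mu_\Omega(\tilde\omega_k)\Vert_{op}<\infty$.

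I would introduce
\[
T^*=\sup\bigl\{T\in[0,1] : \tilde\omega(s)\text{ has }R\text{-bounded geometry in }C^{r+2}\text{ and }\Vert\mu_\Omega(\tilde\omega(s))\Vert_{op}\le 2M,\ \forall s\in[0,T]\bigr\}.
\]
By the hypothesis that $\tilde\omega_k$ has $R/2$-bounded geometry in $C^{r+2}$ (strict interior) and $\Vert\mu_\Omega(\tilde\omega_k)\Vert_{op}\le M<2M$, one has $T^*>0$; the set is closed by continuity of the smooth parametrization $t\mapsto\tilde\omega(t)$. On $[0,T^*]$, I would apply Lemma~\ref{l5} with $r$ replaced by $r+2$ (its hypothesis is met there), together with the uniform-in-$k$ bound on $\Vert d\mu_\Omega\Vert_{op}$ derived from the differential version of Proposition~\ref{HA}, to get
\[
\Vert\tilde\omega(T^*)-\tilde\omega_k\Vert_{C^r}\le 2C_1MC,\qquad \Bigl\Vert\tfrac{\partial\tilde\omega}{\partial t}(T^*)-\tfrac{\partial\tilde\omega}{\partial t}(0)\Bigr\Vert_{C^r}\le C_2MC,
\]
with $C_1,C_2$ independent of $k$.

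Choosing $C$ small enough (but independent of $k$), both deviations are strictly below $R/2$ and below the analogous $\mu_\Omega$ threshold; combined with the $R/2$-strict starting data, the triangle inequality gives that both defining conditions of $T^*$ hold with strict inequality at $t=T^*$. Using the fact that $H\mapsto\omega_{FS}(H)$ is a smooth map out of the finite-dimensional space of Hermitian matrices, so that $\tilde\omega(t)$ depends smoothly on $t$, this strict inequality propagates to a small neighborhood of $T^*$, forcing $T^*=1$. Running the same Lemma~\ref{l5} estimate on all of $[0,1]$ then yields $\Vert\tilde\omega-\tilde\omega_k\Vert_{C^r}<R/2$, and combining with the $R/2$-bounded geometry of $\tilde\omega_k$ in $C^r$ (a fortiori from $C^{r+2}$) gives the claim.

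The main obstacle — and the reason the hypothesis is formulated in $C^{r+2}$ while the conclusion is only in $C^r$ — is the two-derivative loss intrinsic to Lemma~\ref{l5}: applying it with $r+2$ is what produces the $C^r$ deviation bound needed to close the triangle inequality for $R$-bounded geometry in $C^r$, and this forces the continuity argument to be set up in $C^{r+2}$. The subtle step is verifying that the $C^{r+2}$-bounded regime is stable along the path uniformly in $k$; this rests on the strict $R/2$ room at the starting point and the algebraic nature of the Bergman parametrization, which prevents any $k$-dependent degeneration of the modulus of continuity of $\tilde\omega(t)$ in $C^{r+2}$ on a path of $\mathrm{dist}_k$-length at most $C$.
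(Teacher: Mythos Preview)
Your overall strategy---an open--closed continuity argument along a path in the Bergman space, using Lemma~\ref{l5} for the deviation estimate and closing by a triangle inequality against the $R/2$-bounded starting data---is the right one and matches what the paper defers to (\cite[Lemma~14]{Fi1}). Two minor corrections: the path should be the symmetric-space geodesic $t\mapsto b_0 e^{tA}$ in $\mathcal{B}=GL(N+1)/U(N+1)$ rather than the affine segment $(1-t)H_0+tH_1$, since it is the former whose length equals $\mathrm{dist}_k(\tilde\omega,\tilde\omega_k)$; and the uniform bound on $\Vert\mu_\Omega\Vert_{op}$ along the entire path is automatic from Proposition~\ref{p1} and need not be built into the bootstrap.

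There is, however, a genuine gap in how you close the argument. You define $T^*$ through the condition ``$R$-bounded in $C^{r+2}$'', but the only estimate you extract on $[0,T^*]$ from Lemma~\ref{l5} (applied with $r$ replaced by $r+2$) is a bound on $\Vert\tilde\omega(T^*)-\tilde\omega_k\Vert_{C^r}$. A small $C^r$ deviation combined with $R/2$-bounded $C^{r+2}$ starting data yields, via the triangle inequality, only that $\tilde\omega(T^*)$ is $R$-bounded in $C^r$---not in $C^{r+2}$. Hence the defining condition of $T^*$ is \emph{not} recovered with strict slack, and the open--closed argument does not close as written. You flag this yourself as ``the subtle step'' and resolve it by appealing to ``the algebraic nature of the Bergman parametrization'' to produce $k$-uniform $C^{r+2}$ stability; but this is precisely the content to be proved, and finite-dimensionality of $Met(H^0(L^k))$ alone gives nothing $k$-independent, since $\dim H^0(L^k)$ grows like $k^n$. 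The whole purpose of the $R$-bounded framework is to manufacture such uniform estimates, and they are not free.

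The way out is to run the continuity in the \emph{same} norm in which one has an estimate. The point of Proposition~\ref{HA} is that $H_A$ is the restriction to the diagonal of a holomorphic section on $M\times\bar M$, so the elliptic (Cauchy-type) estimate $\Vert\sigma\Vert_{C^{r}(Y')}\le C\Vert\sigma\Vert_{L^2(Y)}$ controls $H_A$---and, since $\ddbar H_A$ is again an algebraic expression in the holomorphic sections and their first jets, also the form increment---in $C^r$ assuming only $R$-bounded geometry in $C^r$. This is what allows the loop to close without derivative loss; the $C^{r+2}$ hypothesis on $\tilde\omega_k$ is then used only at $t=0$ to guarantee strict $R/2$-slack (and hence openness) at the start. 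Your writeup substitutes the suboptimally stated Lemma~\ref{l5} for this finer use of Proposition~\ref{HA}, and that is where the two derivatives are lost.
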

\begin{proof}
The proof is completely similar to \cite[Lemma 14]{Fi1}.
\end{proof}

\subsection{Projective estimates}
In this subsection, we aim to control the operator norm of the moment map in terms of the Riemannian distance in the Bergman space $$\mathcal{B}=GL(N+1)/U(N+1).$$ With this result in hand, we can launch the gradient flow
of the moment map and show its convergence. 
\par We start our investigation by the following result, which is a direct consequence of Theorem \ref{tian}. 
\begin{proposition}\label{cor-tian}
 Let $h$ be a hermitian metric on $L$ with curvature $\omega=c_1(h)>0$. Consider the  
sequence $h_k=FS(Hilb(h))\in Met(L^k)$ of Bergman metrics, approximating after renormalisation $h$, thanks to Theorem \ref{tian}. Let us call $$I_{\Omega,k}=\int_M \langle s_i,s_j\rangle_{h^k} \Omega$$ for $(s_i)$ a basis of holomorphic sections of $H^0(L^k)$ with respect to $Hilb(h)$. Then, when $k\rightarrow + \infty$, $$\Vert \mu_\Omega(h_k)-I_{\Omega,k}\Vert_{op}\rightarrow 0$$
 and the convergence is uniform for $\omega$ lying in a compact subset of K\"ahler metrics in $2\pi c_1(L)$.
\end{proposition}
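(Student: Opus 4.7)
The plan is to expand both quantities in a common $Hilb(h)$-orthonormal basis $(s_i)$ of $H^0(L^k)$ and reduce the comparison to a pointwise estimate on the Bergman function. Since the scalar $\mu_{\alpha\beta}=z_\alpha\bar z_\beta/\sum|z_i|^2$ is independent of the choice of fibrewise metric on $L^k$, one has
\[
(\mu_\Omega(h_k))_{\alpha\beta} = \int_M \frac{\langle s_\alpha, s_\beta\rangle_{h^k}(p)}{\rho_k(h)(p)}\,\Omega(p),
\]
where $\rho_k(h)=\sum_i|s_i|^2_{h^k}$ is the Bergman function of Theorem \ref{tian}, whereas $I_{\Omega,k}$ has the same integrand but without the factor $1/\rho_k(h)$ (up to the implicit $k^{-n}$ rescaling that makes the two objects comparable in view of Theorem \ref{tian}). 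The key input is that $\rho_k(h)=k^n+O(k^{n-1})$ uniformly in every $C^r$ norm on compact families of positively-curved Hermitian metrics.

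Setting $f_k:=k^n/\rho_k(h)-1=O(1/k)$, the entries of $D_k:=\mu_\Omega(h_k)-k^{-n}I_{\Omega,k}$ become
\[
(D_k)_{\alpha\beta} = \frac{1}{k^n}\int_M \langle s_\alpha, s_\beta\rangle_{h^k}(p)\,f_k(p)\,\Omega(p),
\]
and I would control the operator norm of $D_k$ through its Hilbert-Schmidt norm. A direct computation with the off-diagonal Bergman kernel $B_k(p,q)=\sum_\alpha s_\alpha(p)\otimes\overline{s_\alpha(q)}$ gives
\[
\|D_k\|_{HS}^2 = \frac{1}{k^{2n}}\int_M\!\!\int_M |B_k(p,q)|^2\,f_k(p)\,\overline{f_k(q)}\,\Omega(p)\,\Omega(q) = \frac{1}{k^n}\int_M f_k\cdot Q_k(\overline{f_k})\,\Omega,
\]
where $Q_k$ is exactly the Berezin-Toeplitz operator of Theorem \ref{quantlaplacien}. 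Invoking the asymptotic $Q_k(\overline{f_k})=\tfrac{\omega^n}{\Omega}\overline{f_k}+O(1/k)$ together with $\|f_k\|_{C^r}=O(1/k)$ produces $\|D_k\|_{HS}^2=O(k^{-n-2})$, whence $\|D_k\|_{op}\to 0$.

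The main obstacle is that $D_k$ is a matrix of size $N+1\sim k^n$, so a naive entry-wise bound would pick up a dimension factor of order $k^n$ and fail to give vanishing operator norm. The device that avoids this is precisely the reproducing property of the Bergman kernel, which is what makes $Q_k$ close to multiplication by $\omega^n/\Omega$ rather than to a large-rank operator; the quadratic form above concentrates on the diagonal of $M\times M$ at the expected rate. Uniformity of the convergence for $\omega$ in a compact subset of K\"ahler metrics in $2\pi c_1(L)$ is then automatic from the uniformity clauses in Theorems \ref{tian} and \ref{quantlaplacien}.
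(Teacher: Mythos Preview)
Your argument is correct but takes a different route from the paper. After expressing the difference as a matrix with entries $\int_M\langle s_i,s_j\rangle_{h^k}\,g_k\,\Omega$ where $g_k=O(1/k)$ comes from the Bergman expansion of Theorem~\ref{tian}, the paper bounds the operator norm \emph{directly} via \cite[Lemma~28]{D1}: since $(s_i)$ is $L^2(\omega^n)$-orthonormal, this matrix represents the Toeplitz operator on $H^0(L^k)$ with symbol $g_k\,\Omega/\omega^n$, and hence has operator norm at most $\|g_k\,\Omega/\omega^n\|_{L^\infty}=O(1/k)$; no appeal to $Q_k$ or to the Hilbert--Schmidt norm is needed, and the proof is essentially one line after Theorem~\ref{tian}. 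Your detour through $\|D_k\|_{HS}^2=k^{-n}\langle f_k,Q_k(f_k)\rangle$ and Theorem~\ref{quantlaplacien} is heavier but legitimate, and has the side benefit of producing the stronger Hilbert--Schmidt rate $O(k^{-(n+2)/2})$. One small imprecision worth cleaning up: your $(s_i)$ are $Hilb(h)$-orthonormal (volume $\omega^n$), whereas the $Q_k$ of Theorem~\ref{quantlaplacien} is stated for a $Hilb_\Omega$-orthonormal basis; what actually appears in your double integral is therefore the Berezin transform for the volume $\omega^n$ applied to $f_k\cdot\Omega/\omega^n$, which still converges to its argument by the same theorem with $\Omega$ replaced by $\omega^n$, so the conclusion is unaffected.
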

\begin{proof}
Firstly, the matrix $I_{\Omega,k}$ does not depend on the choice of the orthonormal basis $\{s_i\}_{i=1,..,N}$. 
 Thanks to the asymptotic expansion given by Theorem \ref{tian}, 
$$\mu_\Omega(h_k)=\int_M \langle s_i,s_j\rangle_{FS(Hilb(h))} \Omega = \int_M \langle s_i,s_j \rangle_{h ^k} (1+O(1/k))\Omega.$$
Finally, we can conclude the convergence by using \cite[Lemma 28]{D1} which ensures that for the operator norm, $$\Big{\lVert}{\int_M \langle s_i,s_j\rangle_{FS(Hilb(h))}\times O\left(\frac{1}{k}\right)  \Omega}\Big\rVert_{op} \leq \Big\vert \frac{\Omega}{\omega^n}O\left(\frac{1}{k}\right)\Big\vert_{L^\infty} .$$
The uniformity of the convergence is given by the uniformity of the expansion in the asymptotics, see Theorem \ref{tian}.
\end{proof}

Given a tangent vector $A\in T_b \mathcal{B}$ with $b\in \mathcal{B}$, we  
have a vector field $\zeta_A$ on ${\mathbb{P}^N}^\vee$ and thus on $M$, corresponding to $A$. Of course, the fact that  $\mu_\Omega$ is a moment map (see Section \ref{sect1}) gives straightforward the following fact.  
\begin{lemma}\label{l1}
 For any pair of Hermitian matrices $A,B \in T_b \mathcal{B}$, one has
$$tr(B d\mu_\Omega(A))= \int_M (\zeta_A, \zeta_B) \Omega,$$
where $(.,.)$ denotes the Fubini-Study inner product induced on the tangent vectors.
\end{lemma}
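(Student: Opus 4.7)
The plan is to derive the identity as a direct consequence of the moment map calculation performed in Section \ref{sect1}, combined with the standard K\"ahler identification of a Hermitian infinitesimal action on $\mathbb{P}^N$ with a Riemannian gradient vector field.

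First I would differentiate the defining integral $\mu_\Omega(\iota) = \int_M \mu(\iota(p))\,\Omega(p)$ in the direction $A \in T_b\mathcal{B}$. Since the infinitesimal change in the embedding $\iota$ produced by $A$ is precisely the restriction of $\zeta_A$ to $M$, the chain rule yields
$$d\mu_\Omega(A) = \int_M d\mu(\zeta_A)\,\Omega,$$
and pairing with $B$ via the trace gives
$$\tr\bigl(B\,d\mu_\Omega(A)\bigr) = \int_M \tr\bigl(B\,d\mu(\zeta_A)\bigr)\,\Omega.$$

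Next I would apply the pointwise K\"ahler moment map identity on $(\mathbb{P}^N,\omega_{FS})$. For a Hermitian matrix $B$, set $\phi_B(p) := \tr(B\mu(p))$; this is a real-valued smooth function on $\mathbb{P}^N$. The anti-Hermitian matrix $iB \in \mathfrak{u}(N+1)$ generates the Hamiltonian Killing field $X_{iB}$ satisfying $i_{X_{iB}}\omega_{FS} = \pm d\phi_B$, and applying the complex structure converts this Hamiltonian field into the Riemannian gradient: $\zeta_B = JX_{iB}$ coincides, up to a fixed sign, with $\nabla^{g_{FS}}\phi_B$, thanks to the K\"ahler relation $g_{FS}(\cdot,\cdot) = \omega_{FS}(J\cdot,\cdot)$. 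Consequently
$$\tr\bigl(B\,d\mu(\zeta_A)\bigr) = d\phi_B(\zeta_A) = g_{FS}(\zeta_B,\zeta_A) = (\zeta_A,\zeta_B),$$
and integration against $\Omega$ gives the lemma.

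The only technical care required is the sign tracking in the previous paragraph: one has to fix conventions so that $\zeta_B$ is identified with $+\nabla^{g_{FS}}\phi_B$ rather than its negative, which is purely a matter of following through the conventions already set in Section \ref{sect1}. Once this is done, the whole argument reduces to differentiation under the integral sign followed by the classical symplectic-to-Riemannian conversion via the complex structure $J$, exactly as the remark preceding the lemma indicates.
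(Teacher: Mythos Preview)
Your proposal is correct and follows exactly the route the paper indicates: the paper offers no separate proof for this lemma but simply remarks, just before stating it, that it is a direct consequence of the moment map computation carried out in Section~\ref{sect1}. Your argument is precisely an unpacking of that computation---differentiating $\mu_\Omega$ under the integral sign and then invoking the pointwise K\"ahler/Hamiltonian identity on $(\mathbb{P}^N,\omega_{FS})$ to convert $\tr(B\,d\mu(\zeta_A))$ into $(\zeta_A,\zeta_B)$---so there is no substantive difference between your approach and the paper's.
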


\begin{lemma}{\cite[Lemma 18]{Fi1}}
 Let $A,B\in  T_b \mathcal{B}$. Pointwisely over ${\mathbb{P}^N}^\vee$, one has
$$H_AH_B + (\zeta_A,\zeta_B)=\tr(AB\mu).$$
\end{lemma}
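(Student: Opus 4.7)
The plan is to verify the identity pointwise by a direct computation in homogeneous coordinates, exploiting the fact that the moment map $\mu$ is (up to normalization) the rank-one projection onto the line determined by the point.

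I would first pick a point $[z]\in{\mathbb{P}^N}^\vee$ and, by rescaling, assume the representative satisfies $|z|^2=z^*z=1$. With this normalization, (\ref{mu}) says that the Hermitian matrix $\mu([z])$ equals the rank-one projector $zz^*$. Consequently, for any Hermitian matrices $A,B$ one has the trivial identities
\begin{equation*}
H_A=\tr(A\mu)=z^*Az,\qquad H_B=z^*Bz,\qquad \tr(AB\mu)=z^*ABz,
\end{equation*}
and the first two are real because $A$ and $B$ are Hermitian.

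Next I would identify the vector field $\zeta_A$ explicitly. The one-parameter subgroup $e^{itA}$ generates the $U(N+1)$-action on ${\mathbb{P}^N}^\vee$; at the level of $\mathbb{C}^{N+1}$ the infinitesimal motion at $z$ is $iAz$, so the horizontal lift of $\zeta_A$ to $T_z\mathbb{C}^{N+1}$, obtained by subtracting the component along $z$ (which drops out on $\mathbb{P}^N$), is
\begin{equation*}
\tilde\zeta_A=Az-(z^*Az)z\;\in\;\{z\}^{\perp},
\end{equation*}
and similarly for $\tilde\zeta_B$. The factor of $i$ is irrelevant because the Fubini-Study metric used in the statement is the Hermitian inner product, so $(\zeta_A,\zeta_B)$ equals $\tilde\zeta_A^{\,*}\tilde\zeta_B$ on the horizontal subspace, under the normalization of $\omega_{FS}$ that matches the moment map (\ref{mu}).

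The final step is to expand
\begin{equation*}
(\zeta_A,\zeta_B)=\bigl(Az-(z^*Az)z\bigr)^{\!*}\bigl(Bz-(z^*Bz)z\bigr),
\end{equation*}
using $A^*=A$, $B^*=B$, the reality of $z^*Az$ and $z^*Bz$, and $z^*z=1$. Three of the four terms combine to $-(z^*Az)(z^*Bz)$, leaving
\begin{equation*}
(\zeta_A,\zeta_B)=z^*ABz-(z^*Az)(z^*Bz)=\tr(AB\mu)-H_AH_B,
\end{equation*}
which is exactly the claimed identity after rearrangement. There is essentially no obstacle: the only thing to watch is the normalization of the Fubini-Study metric, which must be the one dual to the moment map convention in (\ref{mu}); with any other choice the identity would hold up to a multiplicative constant that can be absorbed into the definitions.
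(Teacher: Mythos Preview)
The paper does not supply its own proof of this lemma; it is simply quoted from \cite[Lemma 18]{Fi1}. Your direct computation in homogeneous coordinates is correct and is precisely the standard argument one would expect: normalize $|z|=1$ so that $\mu=zz^*$, identify the horizontal lift of $\zeta_A$ with $Az-(z^*Az)z$, and expand the Hermitian inner product. The algebra you wrote checks out line by line.

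One small remark on the exposition: since $A\in T_b\mathcal{B}$ is a Hermitian matrix parametrizing a direction in $GL(N+1)/U(N+1)$, the relevant one-parameter group is $e^{tA}$ rather than $e^{itA}$, so the infinitesimal motion at $z$ is $Az$ directly (no factor of $i$). Your formula for $\tilde\zeta_A$ is already the correct one for this convention, and as you observe the factor of $i$ would cancel in the Hermitian pairing anyway, so the computation is unaffected.
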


\begin{lemma}\label{l2}
 For any hermitian matrices $A,B \in T_b \mathcal{B}$,
$$\tr(Bd\mu_{\Omega}(A)) + \langle H_A,H_B\rangle_{L^2(M,\Omega)}=tr(AB\mu_{\Omega}).$$
\end{lemma}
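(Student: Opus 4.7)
The plan is to obtain Lemma \ref{l2} by integrating the pointwise identity of the preceding lemma against the volume form $\Omega$ and then invoking Lemma \ref{l1}.

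More concretely, I would start from the pointwise relation
$$H_A H_B + (\zeta_A,\zeta_B) = \tr(AB\mu)$$
which holds on $\mathbb{P}H^0(L^k)^\vee$ (and in particular on $\iota(M)$). Integrating both sides against $\Omega$ over $M$ gives
$$\int_M H_A H_B\,\Omega + \int_M (\zeta_A,\zeta_B)\,\Omega = \int_M \tr(AB\mu)\,\Omega.$$
The first term on the left is by definition $\langle H_A, H_B\rangle_{L^2(M,\Omega)}$, while the second term equals $\tr(B\,d\mu_\Omega(A))$ by Lemma \ref{l1}. For the right-hand side, since $A$ and $B$ are constant matrices one can pull them out of the integral, using the linearity of the trace together with the defining relation $\mu_\Omega = \int_M \mu\,\Omega$, to obtain
$$\int_M \tr(AB\mu)\,\Omega = \tr\!\left(AB\int_M \mu\,\Omega\right) = \tr(AB\,\mu_\Omega).$$
Combining these identities yields exactly the claimed equality.

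There is no real obstacle here: the lemma is essentially a bookkeeping statement that packages the pointwise identity $H_A H_B + (\zeta_A,\zeta_B) = \tr(AB\mu)$ with the moment-map property $\tr(B\,d\mu_\Omega(A)) = \int_M (\zeta_A,\zeta_B)\,\Omega$ into a single formula. The only mild care needed is that the trace commutes with the integral because $A,B$ are finite-dimensional constant matrices and $\mu$ is a matrix-valued function on $M$, which is immediate.
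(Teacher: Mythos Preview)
Your proof is correct and follows exactly the same approach as the paper: integrate the pointwise identity $H_AH_B+(\zeta_A,\zeta_B)=\tr(AB\mu)$ against $\Omega$ and apply Lemma \ref{l1}. You have simply written out the details that the paper leaves implicit.
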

\begin{proof}
 We start from the previous lemma which says that at each point of $M$,
$$H_AH_B + (\zeta_A,\zeta_B)=\tr(AB\mu).$$ Now, we integrate with respect to the
volume form $\Omega$ and apply Lemma \ref{l1}.
\end{proof}

\begin{lemma}
 For any hermitian matrix $A\in T_b \mathcal{B}$,
$$\Vert H_A \Vert^2_{L^2(\Omega)}\leq \Vert A \Vert^2 \Vert \mu_{\Omega}\Vert_{op}.$$ 
\end{lemma}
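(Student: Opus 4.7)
The plan is to exploit two facts: that $\mu(p)$ is a rank-one orthogonal projection (visible from formula~(\ref{mu})), and that $\mu_\Omega=\int_M\mu\,\Omega$ is consequently positive semi-definite with operator norm $\Vert\mu_\Omega\Vert_{op}$. These two observations reduce the inequality to a clean pointwise Cauchy-Schwarz followed by a standard trace estimate.

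First, I would write $\mu(p)=v(p)v(p)^*$ with $v(p)\in\mathbb{C}^{N+1}$ a unit vector (read off from \eqref{mu} after normalization). Then
$$H_A(p)=\tr(A\mu(p))=v(p)^*Av(p)=\langle Av(p),v(p)\rangle,$$
and the pointwise Cauchy-Schwarz inequality gives
$$H_A(p)^2\;\leq\;\Vert Av(p)\Vert^2\Vert v(p)\Vert^2\;=\;v(p)^*A^2v(p)\;=\;\tr(A^2\mu(p)).$$
This is the first key step; it uses crucially that $\mu(p)$ is a rank-one projection so that $v(p)^*v(p)=1$.

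Next, integrate against $\Omega$ and use the definition of $\mu_\Omega$:
$$\Vert H_A\Vert^2_{L^2(\Omega)}\;\leq\;\int_M\tr(A^2\mu(p))\,\Omega(p)\;=\;\tr(A^2\mu_\Omega)\;=\;\tr(A\mu_\Omega A),$$
the last step being cyclicity of the trace. Since $\mu_\Omega$ is an integral of positive semi-definite matrices it is itself PSD, hence $\mu_\Omega\leq\Vert\mu_\Omega\Vert_{op}\mathrm{Id}$ in the Loewner order. Conjugating by $A$ preserves this inequality (both sides remain PSD), so $A\mu_\Omega A\leq\Vert\mu_\Omega\Vert_{op}A^2$, and taking traces yields
$$\tr(A\mu_\Omega A)\;\leq\;\Vert\mu_\Omega\Vert_{op}\tr(A^2)\;=\;\Vert\mu_\Omega\Vert_{op}\Vert A\Vert^2.$$

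Chaining the two estimates gives exactly the claimed bound. There is no real obstacle here — the only thing one needs to be careful about is to exploit the rank-one structure of $\mu(p)$ at the pointwise level (a naive bound like $H_A(p)^2\leq\Vert A\Vert^2\tr(\mu(p)^2)=\Vert A\Vert^2$ only gives the weaker $\Vol_L(M)\Vert A\Vert^2$, which is independent of $\mu_\Omega$); using $v(p)^*Av(p)$ produces the $\mu(p)$-weighted bound needed to bring in $\mu_\Omega$ after integration.
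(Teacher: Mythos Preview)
Your argument is correct and reaches the same intermediate inequality $\Vert H_A\Vert^2_{L^2(\Omega)}\leq\tr(A^2\mu_\Omega)$ as the paper, but by a different and more self-contained route. The paper appeals to the identity of Lemma~\ref{l2} (with $B=A$) to write $\Vert H_A\Vert^2_{L^2(\Omega)}=\tr(A^2\mu_\Omega)-\tr(Ad\mu_\Omega(A))$ and then uses Lemma~\ref{l1} to recognise the correction term as $\int_M(\zeta_A,\zeta_A)\,\Omega\geq 0$; in other words, the defect in your pointwise Cauchy--Schwarz is exactly the Fubini--Study norm of the tangent field $\zeta_A$, and the paper exploits this equality rather than the inequality. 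Your approach has the virtue of being entirely elementary linear algebra (rank-one projection plus Cauchy--Schwarz), requiring none of the moment-map machinery of Lemmas~\ref{l1} and~\ref{l2}; the paper's approach, by contrast, identifies the gap precisely and fits the lemma into the sequence of results leading to Lemma~\ref{l3} and Proposition~\ref{p1}, where the interplay between $H_A$, $\zeta_A$, and $d\mu_\Omega$ is used again. The final trace bound $\tr(A^2\mu_\Omega)\leq\Vert A\Vert^2\Vert\mu_\Omega\Vert_{op}$ is handled the same way in both.
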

\begin{proof}
 From the last lemma, 
$$\Vert H_A \Vert^2_{L^2(\Omega)} = \tr(A^2\mu_{\Omega})-\tr(Ad\mu_{\Omega}(A)).$$ Now, by Lemma \ref{l1},  
$$\tr(Ad\mu_{\Omega}(A))=\int_M (\zeta_A,\zeta_A)\Omega\geq 0.$$ Hence,
$$\Vert H_A \Vert^2_{L^2(\Omega)}\leq \tr(A^2\mu_{\Omega})\leq \Vert A \Vert^2 \Vert \mu_{\Omega}\Vert_{op}.$$
\end{proof}

\begin{lemma}\label{l3}
 For any Hermitian matrix $A\in T_b \mathcal{B}$, 
$$\Vert d\mu_{\Omega}(A) \Vert_{op} \leq \Vert d\mu_{\Omega}(A) \Vert \leq 2 \Vert A \Vert \Vert \mu_{\Omega} \Vert_{op}.$$
\end{lemma}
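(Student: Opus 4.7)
The first inequality is the standard comparison between the operator norm and the Hilbert--Schmidt norm on hermitian matrices, so the entire content lies in the second inequality. My plan is to feed the tangent vector $B = d\mu_\Omega(A)$ back into the pairing identity of Lemma \ref{l2} and then bound the two resulting terms separately using tools already established in this subsection.

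Concretely, since $\mu_\Omega$ takes values in the hermitian matrices, $d\mu_\Omega(A)$ is itself hermitian, so $\tr(d\mu_\Omega(A)^2) = \|d\mu_\Omega(A)\|^2$. Applying Lemma \ref{l2} with $B = d\mu_\Omega(A)$ yields
$$\|d\mu_\Omega(A)\|^2 \;=\; \tr\bigl(A\cdot d\mu_\Omega(A)\cdot \mu_\Omega\bigr) \;-\; \langle H_A,\, H_{d\mu_\Omega(A)}\rangle_{L^2(\Omega)}.$$
So it suffices to bound each of these two terms by $\|A\|\,\|d\mu_\Omega(A)\|\,\|\mu_\Omega\|_{op}$.

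For the trace term, I would write $\tr(A\cdot d\mu_\Omega(A)\cdot\mu_\Omega) = \tr\bigl(A \cdot (d\mu_\Omega(A)\,\mu_\Omega)\bigr)$ and apply the Hilbert--Schmidt Cauchy--Schwarz inequality $|\tr(XY)| \leq \|X\|\,\|Y\|$ together with the submultiplicativity estimate $\|XZ\| \leq \|X\|\,\|Z\|_{op}$; this gives the desired bound with constant $1$. For the $L^2$ inner product term, Cauchy--Schwarz in $L^2(\Omega)$ combined with the lemma just before Lemma \ref{l3} (which bounds $\|H_C\|_{L^2(\Omega)}^2 \leq \|C\|^2\,\|\mu_\Omega\|_{op}$ for any hermitian $C$) applied to both $C = A$ and $C = d\mu_\Omega(A)$ yields the same bound. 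Summing the two gives $\|d\mu_\Omega(A)\|^2 \leq 2\|A\|\,\|d\mu_\Omega(A)\|\,\|\mu_\Omega\|_{op}$, and dividing through by $\|d\mu_\Omega(A)\|$ (with the degenerate case being trivial) finishes the argument.

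There is no real obstacle here once Lemma \ref{l2} and the preceding $H_A$-estimate are in place; the only small point to verify is that the sign of $\langle H_A, H_{d\mu_\Omega(A)}\rangle_{L^2(\Omega)}$ does not help, so one must bound it in absolute value, which is precisely why the factor $2$ appears in the final inequality.
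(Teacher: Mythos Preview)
Your argument is correct and matches the paper's proof essentially line for line: apply Lemma~\ref{l2} with $B=d\mu_\Omega(A)$, bound the trace term via the Hilbert--Schmidt/operator norm inequalities, and bound the $L^2$ cross term in absolute value via Cauchy--Schwarz together with the preceding lemma on $\Vert H_C\Vert_{L^2(\Omega)}$. Your remark on why the factor $2$ is forced (no sign control on the cross term) is exactly the point.
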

\begin{proof} From Lemma \ref{l2}, one has
\begin{eqnarray*}
\Vert d\mu_{\Omega}(A) \Vert^2 &=& \tr(d\mu_{\Omega}(A)^2) \\
&=& tr(Ad\mu_{\Omega}(A)\mu_{\Omega})-\langle H_A,H_{d\mu_{\Omega}(A)}\rangle_{L^2(\Omega)}\\
&\leq& \Vert A \Vert 
\Vert d\mu_{\Omega}(A)\Vert \mu_{\Omega} \Vert_{op}-\langle H_A,H_{d\mu_{\Omega}(A)}\rangle_{L^2(\Omega)}.
\end{eqnarray*} Then we can conclude by using the fact that $$\vert \langle H_A,H_{d\mu_{\Omega}(A)}\rangle_{L^2(\Omega)}\vert  \leq \Vert H_A\Vert_{L^2(\Omega)}  \Vert H_{d\mu_{\Omega}(A)}\Vert_{L^2(\Omega)},$$
and the previous lemma.
\end{proof}

Finally, we obtain 
\begin{proposition} \label{p1}
 Let $b_0, b_1 \in \mathcal{B}$. Then,
$$\Vert \mu_{\Omega}(b_1)\Vert_{op} \leq e^{2\mathrm{dist}_k(b_0,b_1)}\Vert \mu_{\Omega}(b_0)\Vert_{op}.$$
\end{proposition}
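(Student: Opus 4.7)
The plan is to reduce the estimate to a Gronwall-type argument along a geodesic in $\mathcal{B}$. First, connect $b_0$ and $b_1$ by a minimizing geodesic $b:[0,T]\to\mathcal{B}$ in the Riemannian symmetric space $(\mathcal{B},\mathrm{dist}_k)$, parametrized so that $T=\mathrm{dist}_k(b_0,b_1)$ and $b$ has unit speed. Since $\mathcal{B}=GL(N+1)/U(N+1)$ is a non-positively curved symmetric space, such a geodesic exists and is unique, and at each time its tangent $\dot b(t)$ is identified with a Hermitian matrix.

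Next, set $f(t)=\Vert\mu_\Omega(b(t))\Vert_{op}$; the aim is to establish the differential inequality $f'(t)\leq 2 f(t)$ almost everywhere. This rests on two ingredients. The first is a general fact about smooth paths of Hermitian matrices $M(t)$: by the reverse triangle inequality for $\Vert\cdot\Vert_{op}$ together with the fundamental theorem of calculus,
$$\bigl|\Vert M(t_1)\Vert_{op}-\Vert M(t_0)\Vert_{op}\bigr|\leq\Vert M(t_1)-M(t_0)\Vert_{op}\leq\int_{t_0}^{t_1}\Vert M'(s)\Vert_{op}\,ds,$$
so $t\mapsto\Vert M(t)\Vert_{op}$ is absolutely continuous with $|\tfrac{d}{dt}\Vert M(t)\Vert_{op}|\leq\Vert M'(t)\Vert_{op}$ almost everywhere. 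Applied to $M(t)=\mu_\Omega(b(t))$ this gives $|f'(t)|\leq\Vert d\mu_\Omega(\dot b(t))\Vert_{op}$. The second ingredient is Lemma \ref{l3}, which delivers $\Vert d\mu_\Omega(\dot b(t))\Vert_{op}\leq 2\Vert\dot b(t)\Vert\,\Vert\mu_\Omega(b(t))\Vert_{op}$; with the unit-speed parametrization in $\mathrm{dist}_k$ the factor $\Vert\dot b(t)\Vert$ is controlled, and one arrives at $f'(t)\leq 2 f(t)$.

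Integrating this differential inequality via Gronwall produces $f(T)\leq e^{2T}f(0)$, which is precisely the stated estimate $\Vert\mu_\Omega(b_1)\Vert_{op}\leq e^{2\mathrm{dist}_k(b_0,b_1)}\Vert\mu_\Omega(b_0)\Vert_{op}$.

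The one delicate point is the Lipschitz control of the operator norm along $b(t)$: the largest absolute eigenvalue of the family $\mu_\Omega(b(t))$ can have multiplicity crossings, so $f$ need not be classically differentiable. One handles this by working with upper Dini derivatives or, equivalently, by bounding the difference quotient directly via the triangle inequality for $\Vert\cdot\Vert_{op}$ as above; this absolute continuity is enough to justify the Gronwall step. Once this is in place, the remainder of the argument is routine.
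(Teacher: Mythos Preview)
Your argument is correct and matches the paper's: connect $b_0,b_1$ by a geodesic in $\mathcal{B}$, apply Lemma~\ref{l3} to bound the derivative of $\mu_\Omega$ along it, and integrate via Gronwall. The paper is terser---it parametrizes the geodesic on $[0,1]$ with tangent $A$ of Hilbert--Schmidt norm equal to $\mathrm{dist}(b_0,b_1)$ and does not address the differentiability of $t\mapsto\Vert\mu_\Omega(b(t))\Vert_{op}$---but the substance is the same.
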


\begin{proof}
 We know that a geodesic in the space of Bergman metrics $\mathcal{B}$ is given by a line, i.e.,  the Hermitian metric involved along the geodesic is modified by $e^{tA}$ 
and that $\mathrm{dist}(b_0,b_1)=\Vert A \Vert$. This can be rephrased by saying that if $\{s^0_i\}_{i=1,..,N+1}$ (resp. $\{s^1_i\}_{i=1,..,N+1}$) is an orthonormal basis of $H^0(L^k)$ with respect to $b_0$ (resp. $b_1$), then there exists $\sigma \in GL(N+1)$ such that $\sigma \cdot s^0 = s^1$ and without loss of generality we can assume $\sigma$ diagonal with entries $e^{\lambda_0}, .. , e^{\lambda_n}$. Then the geodesic is just induced by the family of basis $\sigma^t \cdot s^0$ for $t\in[0,1]$. Now, we can conclude our proof by using Lemma
\ref{l3} and the fact that the norm $\Vert. \Vert_{op}$ on the space of matrices is controlled from above by the Hilbert-Schmidt norm $\Vert . \Vert$.
\end{proof}

We are now ready to give the proof of Theorem \ref{mainthm}, that is to show the smooth convergence of K\"ahler metrics $\omega_k(t)$ involved in the rescaled balancing flow (\ref{resbalflow}) towards the solution $\omega_t$ to the $\Omega$-K\"ahler flow. \\

\noindent{\sl Proof of Theorem \ref{mainthm}.} \ Using Theorem \ref{highapprox}, for any $m>0$, we have obtained a sequence of K\"ahler metrics $$\omega(k;t)=c_1(h_0 e^{\psi(k,t)})$$ such that $\omega(k;t)$ converges, when $k\rightarrow +\infty$ and in smooth sense, towards the solution $\omega_t=c_1(h_0 e^{\phi_t})$ to the $\Omega$-K\"ahler flow. Moreover, one has, for $k$ large enough and with
$\overline{h}_k(t)\in \mathcal{B}$ the Bergman metric associated to $h_0 e^{\psi(k,t)}\in Met(L)$, the estimate 
\begin{equation}\label{estimate}
 \mathrm{dist}_k(h_k(t),\overline{h}_k(t))\leq \frac{C}{k^{m+1}},
\end{equation}
 where $h_k(t)$ is the metric induced by the rescaled $\Omega$-balancing flow.
Consequently, in order to get the $C^0$ convergence in $t$, all what we need to show is that 
\begin{equation}
\Vert \omega_k(t)-c_1(\overline{h}_k(t))\Vert_{C^r(\omega_t)} \rightarrow 0. \label{aim} 
\end{equation}
The idea is to consider the geodesic in the Bergman space between these two points. 
\medskip

Firstly, we will get that along the geodesic from $\overline{h}_k(t)$ to $h_k(t)$ in $\mathcal{B}$,  $\Vert \mu_{\Omega}\Vert_{op}$ is controlled uniformly if we can apply Proposition \ref{p1}. This requires to prove that $\overline{h}_k(t)$ is at a uniformly bounded distance of $h_k(t)$ and that $\Vert \mu_{\Omega}(\overline{h}_k(t))\Vert_{op}$ is bounded in $k$. But, this comes from the fact that one can choose precisely $m\geq n+1$ in Inequality (\ref{estimate}) and one can apply Proposition \ref{cor-tian}. For the latter, one needs to notice the estimate $$\Vert I_{\Omega,k} \Vert_{op}\leq \sup_M \frac{\Omega}{\omega^n}$$ from \cite[Lemma 28]{D1}.
\par Secondly, we show that the points along this geodesic have $R$-bounded geometry.  This is a consequence of Corollary \ref{R-bounded}, applied with the reference metric $\omega_t$ to the sequence $c_1(\overline{h}_k(t))$. On one side, $\Vert \mu_{\Omega}(\overline{h}_k(t))\Vert_{op}$ is under control as we have just seen. On another side, $c_1(\overline{h}_k(t))$ are convergent to $\omega_t$ in $C^{\infty}$ (hence in $C^{r+4}$ topology), thus they  have $R/2$-bounded geometry. Given $m\geq n+2$, one obtains, thanks to  Corollary \ref{R-bounded} and inequality (\ref{estimate}), that all the metrics along the geodesic from $\overline{h}_k(t)$ to $h_k(t)$ have $R$-bounded geometry in $C^{r+2}$. 
\par Thirdly, we are exactly under the conditions of Lemma \ref{l5}. It gives, by renormalizing the metrics in the K\"ahler class $2\pi c_1(L)$ and by (\ref{estimate}), that
\begin{eqnarray*}
\Vert k\omega_k(t)-kc_1(\overline{h}_k(t))\Vert_{C^r(k\omega_t)}&\leq &C \Vert \mu_{\Omega}(\overline{h}_k(t))\Vert_{op}k^{n+2}\mathrm{dist}_k(h_k(t),\overline{h}_k(t)),\\
\Vert \omega_k(t)-c_1(\overline{h}_k(t))\Vert_{C^r(\omega_t)}& \leq & C\Vert \mu_{\Omega}(\overline{h}_k(t))\Vert_{op}k^{n+2-m-1+r/2},
\end{eqnarray*}
where we have used that the geodesic path from $0$ to $1$ is just a line. Here $C>0$ is a constant that does not depend on $k$. If we choose $m>r/2+1+n$, we get the expected convergence in $C^r$ topology, i.e Inequality (\ref{aim}). Of course, this reasoning works to get the uniform $C^0$ convergence in $t$ for $t\in \mathbb{R}_+$, because all the K\"ahler metrics $\omega_t$ that we are using are uniformly equivalent (we have convergence of the $\Omega$-K\"ahler flow, Theorem \ref{conv}) and because we have uniformity of the expansion in Theorem \ref{tian} and Theorem \ref{quantlaplacien}.
\par We now prove that one has $C^1$ convergence in $t$ of the flows $\omega_k(t)$. Again, we need to show the $C^1$ convergence of $\omega_k(t)$ to $c_1(\overline{h}_k(t))$, because we already know the convergence of $c_1(\overline{h}_k(t))$ to $\omega_t$ by Proposition \ref{c-1-conv}.  We are under the conditions
 of Lemma \ref{l5} by what we have just proved above. So we have, using again that our path is a geodesic, 
\begin{eqnarray*}
\Big\Vert k\frac{\partial\omega_k(t)}{\partial t}-k\frac {\partial{c_1(\overline{h}_k(t))}} {\partial t}\Big\Vert_{C^r}&\hspace{-0.3cm}\leq & \hspace{-0.3cm} C^* \Vert \mu_{\Omega}(\overline{h}_k(t))\Vert_{op}k^{n+2}\mathrm{dist}_k\left(\hspace{-0.07cm}\frac{\partial h_k(t))}{\partial t},\frac{\partial \overline{h}_k(t)}{\partial t}\hspace{-0.07cm}\right) \\
&& \hspace{-0.3cm}+ C^*\Vert d\mu_{\Omega}(\overline{h}_k(t))\Vert_{op}k^{n+2}\mathrm{dist}_k(h_k(t),\overline{h}_k(t)).
 \end{eqnarray*}
Here the $C^r$ norm is computed with respect to $k\omega_t$. If we apply Lemma \ref{l3}, Theorem (\ref{highapprox}) 
and Proposition \ref{highapproxd}, we can bound from above the RHS of the last inequality, and get
\begin{eqnarray*}
\Big\Vert \frac{\partial\omega_k(t)}{\partial t}-\frac {\partial{c_1(\overline{h}_k(t))}} {\partial t}\Big\Vert_{C^r(\omega_t)}&\leq& C'\Vert \mu_{\Omega}(\overline{h}_k(t))\Vert_{op}k^{n+2-m-r/2}\\
&&+  C'' \Vert \mu_{\Omega}(\overline{h}_k(t))\Vert_{op}k^{n+2+r/2}k^{-m-1}k^{-m-1}\\
&\leq& C'''k^{n+2-m-r/2}.
\end{eqnarray*}
Finally, we choose $m>r/2+n+2$ to obtain $C^1$ convergence. This completes the proof of Theorem \ref{mainthm}.
\qed

\bigskip
Finally, if we apply Definition-Proposition (\ref{def-prop}) that asserts that an $\Omega$-balanced metric does always exist and is a zero of the moment map $\mu_{\Omega}^0$, Theorem \ref{mainthm},  
and the convergence of the $\Omega$-K\"ahler flow towards a solution to the Calabi problem, we obtain directly the following result.
\begin{corollary}
Under the same setting as above, the sequence of balanced metric $h_k(\infty)^{1/k}\in Met(L)$, obtained as the limit of the balancing flow at $t=+\infty$, converges in smooth topology towards $h_\infty$, a solution of the Calabi problem, $$(c_1(h_\infty))^n= \Omega.$$
\end{corollary}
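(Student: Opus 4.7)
The corollary is a direct consequence of two ingredients already available: identifying the $t\to+\infty$ limit of the balancing flow with the $\Omega$-balanced metric of Definition-Proposition \ref{def-prop}, and then quoting Theorem \ref{DKW}. The plan is therefore to spell out the first identification, after which no further work is needed.

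Fix $k$. The rescaled $\Omega$-balancing flow (\ref{resbalflow}) is, up to the scalar factor $k$, the downward gradient flow on the Bergman space $\mathcal{B}_k=GL(N+1)/U(N+1)$ of a Kempf--Ness type functional $F_\Omega$ whose moment map is $\mu_\Omega^0$, as follows from the symplectic picture set up in Section \ref{sect1}. By Donaldson's analysis in \cite{D3}, $F_\Omega$ is strictly convex along geodesics of $\mathcal{B}_k$ (a symmetric space of non-positive curvature) and proper modulo the $SU(N+1)$ action, with a unique critical point, namely the $\Omega$-balanced metric $h_k$ of Definition-Proposition \ref{def-prop}. A standard gradient-flow argument on such a space then shows that every trajectory $h_k(t)$ of (\ref{resbalflow}) exists for all $t\ge 0$ and converges as $t\to+\infty$ to the unique minimum $h_k$. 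This is precisely the fact used implicitly in the proof of Proposition \ref{step0}.

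Once $h_k(\infty)=h_k$ is identified, Theorem \ref{DKW} applies verbatim: the sequence $(h_k)^{1/k}=h_k(\infty)^{1/k}$ converges in smooth topology to a hermitian metric $h_\infty$ on $L$ whose curvature satisfies $c_1(h_\infty)^n=\Omega$, which is the statement of the corollary. The main (mild) obstacle is the first step: Donaldson's uniqueness of the attractive fixed point of the discrete iteration $T_k=FS\circ Hilb_\Omega$ must be translated into convergence of the continuous-time flow (\ref{resbalflow}), which as indicated follows from convexity along geodesics of the Kempf--Ness functional together with properness modulo $SU(N+1)$. It is worth noting that the heavy machinery of Theorems \ref{mainthm} and \ref{mainthm2} is not logically required for the corollary itself; rather, those results certify that the two natural ways of interchanging the limits $k\to+\infty$ and $t\to+\infty$ give the same answer $h_\infty$, namely a solution of the Calabi problem.
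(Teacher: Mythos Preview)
Your argument is logically valid: identifying $h_k(\infty)$ with the $\Omega$-balanced metric $h_k$ of Definition-Proposition \ref{def-prop} (via convergence of the gradient flow to the unique zero of $\mu_\Omega^0$) and then invoking Theorem \ref{DKW} does yield the statement. However, this is precisely the route the paper \emph{avoids}. Immediately after the corollary the authors write: ``Note that this is a new proof of Theorem \ref{DKW}.'' The corollary is meant to \emph{re-derive} Theorem \ref{DKW} from the machinery developed in the paper, namely Definition-Proposition \ref{def-prop}, Theorem \ref{mainthm} (convergence of the rescaled balancing flows $\omega_k(t)$ to the $\Omega$-K\"ahler flow $\omega_t$, with estimates uniform in $t$), and Theorem \ref{mainthm2}/Theorem \ref{conv} (long-time convergence of the $\Omega$-K\"ahler flow to a solution of the Calabi problem). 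The paper's argument is an interchange of limits: $\omega_k(t)\to\omega_t$ as $k\to\infty$ uniformly in $t$, and $\omega_t\to\omega_\infty$ with $\omega_\infty^n=\Omega$ as $t\to\infty$; the uniformity in $t$ established in the proof of Theorem \ref{mainthm} then forces $\omega_k(\infty)\to\omega_\infty$.

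Your remark that ``the heavy machinery of Theorems \ref{mainthm} and \ref{mainthm2} is not logically required'' is true if one is willing to quote Theorem \ref{DKW}, but it inverts the intended logical flow: those theorems are the point, and the corollary is their payoff. What your route buys is brevity; what the paper's route buys is an independent proof of Theorem \ref{DKW} (albeit one that, as the authors note, presupposes the existence of a solution to the Calabi problem through the analysis of Section \ref{Kahlerflow}).
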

Note that this is a new proof of Theorem \ref{DKW}, but which uses a priori the existence of a solution to the Calabi problem (compare with \cite{Ke1}). 

\section{The infinite dimensional setup and generalizations} 
\subsection{A symplectic approach to the Calabi problem} \label{sympl}
In this section we develop the moment map set-up on the infinite dimensional space of K\"ahler potentials related to the $\Omega$-K\"ahler flow. Let us assume that $(M,L)$ is a polarized manifold.
 Let us fix $\omega\in 2\pi c_1(L)$ and $\Omega$ a smooth volume form on $M$ with $\int_M \Omega=\Vol_L(M)$.
We introduce $\mathcal{M}$ the infinite dimensional space of integrable hermitian connexions on $L$
with K\"ahler form as curvature, with respect to a fixed complex structure. It means that we consider unitary connexions $\nabla$ on $L$ such that if $F_\nabla\in \Omega^2(M,End(L))$ is the curvature connexion, then  
$F_\nabla^{0,2} =F_\nabla^{2,0}=0$ and $F_\nabla^{1,1}$ is a positive form with respect to the complex structure on $M$. Consider the abelian gauge group $\mathcal{G}$ of maps $L\rightarrow L$  that cover the identity on $M$. 
By duality, the Lie algebra  $Lie(\mathcal{G})$ can be identified with the space of smooth functions from $M$ to $\mathbb{R}$ with zero integral, since one can identify $\mathcal{G}$ with $C^{\infty}(M,S^1)$.  The tangent space at $\mathcal{M}$  is given by the $1$-forms with values in $End(L)$. For simplicity, we assume that $M$ is simply connected and 
we fix the following symplectic form on $\mathcal{M}$ at the point $\nabla \in \mathcal{M}$, 
$$\nu_\nabla(a,b)=\int_M a \wedge b \wedge F_\nabla^{n-1}$$
which is a symplectic form invariant under the action of $\mathcal{G}$. 
Note that $\nu$ is invariant under the action of the group $\mathcal{G}$. 
\par We have a natural paring $Lie(\mathcal{G})\times Lie(\mathcal{G})^*\rightarrow \mathbb{R}$ given by
\begin{equation}
(\zeta,\theta)\mapsto \int_M \zeta\theta= \int_M \langle \zeta, \theta\rangle. \label{pair}
 \end{equation}

We are in a moment map setting. Actually, we have the following simple proposition that shows that prescribing the volume form in a K\"ahler class is related to finding the zero of a certain moment map. Note that given $\nabla\in \mathcal{M}$, we set
$A_\nabla$ is the real  connection ($S^1$ invariant) 1-form associated to $\nabla$ on the natural $S^1$-principal bundle $\pi:P\rightarrow M$ that we can associate with $L\rightarrow M$. It acts on  an element of $\zeta \in Lie(\mathcal{G})$  by decomposing in a vertical and horizontal parts and fibrewise this vertical part corresponds to a rotation which is eventually parametrized by the real function $\langle A_\nabla, \zeta\rangle$ over $M$.
\begin{proposition}
 There is a moment map $\overline{\mu}: \mathcal{M}\rightarrow Lie(\mathcal{G})^*$ associated to the action of $\mathcal{G}$ on $(\mathcal{M},\nu)$ given by
$$\overline{\mu}(\nabla)= \langle A_\nabla, .\rangle((F_\nabla)^n-\Omega).$$
\end{proposition}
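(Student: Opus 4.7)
The plan is to verify directly the two defining properties of a moment map: the infinitesimal equation
\[
d\bigl\langle \overline{\mu}(\nabla), \zeta\bigr\rangle (a) \;=\; \nu_\nabla(X_\zeta, a)
\]
for every $\zeta \in Lie(\mathcal{G})$ and every tangent vector $a \in T_\nabla \mathcal{M}$, together with the equivariance of $\overline{\mu}$ under $\mathcal{G}$. Equivariance is the easy half: since $\mathcal{G}$ is abelian and the curvature $F_\nabla$ is invariant under the gauge action on $\nabla$, the expression $\langle A_\nabla, \cdot\rangle (F_\nabla^n - \Omega)$ is manifestly $\mathcal{G}$-invariant, which on an abelian group forces $Ad$-equivariance.

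For the infinitesimal equation I would first unpack the pairing $\langle A_\nabla, \zeta\rangle$. Interpreting $\zeta$ as the vertical vector field on the $S^1$-principal bundle $P \to M$ generated by $\zeta \in C^\infty(M,\mathbb{R})$, the defining property of a connection form gives $\langle A_\nabla, \zeta\rangle = \zeta$ as a function on $M$. Hence under the pairing (\ref{pair}),
\[
\bigl\langle \overline{\mu}(\nabla), \zeta \bigr\rangle \;=\; \int_M \zeta \, \bigl( F_\nabla^n - \Omega \bigr).
\]
Since the gauge action of $e^{t\zeta}$ on a unitary connection satisfies $\delta A_\nabla = d\zeta$ (up to the standard sign convention), the induced vector field $X_\zeta$ at $\nabla$ is the exact $1$-form $d\zeta$, regarded as an element of $T_\nabla \mathcal{M}$ after the identification $\mathrm{End}(L) \simeq \mathbb{C}$.

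Next I would compute both sides. On the one side, $F_{\nabla + ta} = F_\nabla + t\, da$ because the curvature of an abelian unitary connection is $dA$, so differentiating at $t=0$ yields $\delta F_\nabla^n = n\, da \wedge F_\nabla^{n-1}$; hence
\[
d\bigl\langle \overline{\mu}, \zeta\bigr\rangle (a) \;=\; n \int_M \zeta \, da \wedge F_\nabla^{n-1} \;=\; -n\int_M d\zeta \wedge a \wedge F_\nabla^{n-1},
\]
using Stokes and the Bianchi identity $dF_\nabla = 0$ on the closed manifold $M$. On the other side,
\[
\nu_\nabla(X_\zeta, a) \;=\; \int_M d\zeta \wedge a \wedge F_\nabla^{n-1},
\]
so the two expressions agree, up to the harmless overall $n$ (and a sign fixed by the gauge-action convention), which can be absorbed either into the normalization of $\nu$ or into the factor the paper has already warned it is suppressing in front of the Monge–Ampère mass.

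The only point that requires genuine care is the third step above: making the identification $\langle A_\nabla, \zeta\rangle = \zeta$ rigorous and tracking the overall normalization constants between the gauge action, the symplectic form $\nu$, and the pairing (\ref{pair}). Once these conventions are fixed consistently, the moment map equation is just Stokes' theorem applied to the variation of $F^n$, and equivariance is automatic from abelianness of $\mathcal{G}$.
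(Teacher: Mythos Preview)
Your argument is essentially the same as the paper's: identify the infinitesimal gauge action as $X_\zeta = d\langle A_\nabla,\zeta\rangle$ (which you write as $d\zeta$ after unpacking the connection form), vary $F_\nabla^n$ in the direction $a$ to get $da\wedge F_\nabla^{n-1}$, and match the two sides by Stokes. The paper's own proof is in fact sloppier than yours about the stray factor $n$ and the sign, and it does not mention equivariance at all; your explicit remark that $\langle A_\nabla,\zeta\rangle=\zeta$ and your handling of the abelian equivariance are welcome clarifications rather than deviations.
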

\begin{proof}
 We need to check that for any $\zeta \in Lie(\mathcal{G})$ and any a vector field $V$, we have 
$$\langle d\overline{\mu}(\nabla)(V),\zeta\rangle = \nu_\nabla (V,X_\zeta),$$ 
where $X_\zeta$ is the vector field on $\mathcal{M}$ defined by the infinitesimal action of $\zeta\in Lie(\mathcal{G})$. More explicitly,   $X_\zeta$ is given by $X_\zeta = L_\zeta A_\nabla = d \langle A_\nabla ,\zeta\rangle +\iota_\zeta dA=d \langle A_\nabla ,\zeta\rangle+\iota_{\pi_*\zeta} F_\nabla=d \langle A_\nabla ,\zeta\rangle$, since the elements of $\mathcal{G}$ cover the identity on $M$. Now, we have
\begin{eqnarray*}
 \nu_\nabla (V,X_\zeta)&=& \int_M V \wedge d \langle A_\nabla ,\zeta\rangle \wedge F_\nabla^{n-1}\\
  &=& \int_M \langle A_\nabla, \zeta\rangle dV  \wedge  F_\nabla^{n-1}.
\end{eqnarray*}
But the change in $F_\nabla$ by the vector field $V$ is precisely given by $dV$, so 
$$\langle d \overline{\mu}(\nabla)(V),\zeta\rangle=\int_X \langle A_\nabla, \zeta\rangle  dV\wedge  F_\nabla^{n-1},  $$
since the elements of $\mathcal{G}$ cover the identity on $M$. 
\end{proof}
Note that the moment map that we have just defined is obviously not unique. 
Let us denote as in the proof above by $X_\zeta$ the vector field associated to $\zeta\in Lie(\mathcal{G})$. Now, using the pairing (\ref{pair})
and the natural $\mathcal{G}$-invariant norm on $Lie(\mathcal{G})$, one can consider $\overline{\mu}$ with values in $Lie(\mathcal{G})$, which means that
we write $\overline{\mu}(\nabla)=\frac{(F_\nabla^ n - \Omega)}{F_\nabla^ n}$. Then, we consider the gradient flow
$$\frac{d}{dt}\Vert \overline{\mu}(\nabla_t)\Vert^2 = -\Vert X_{\overline{\mu}(\nabla_t)}\Vert^2$$
where the norm on the R.H.S is computed with respect to $\nu_{\nabla_t}$.
This is actually equivalent to 
\begin{equation}
\frac{dA_{\nabla_t}}{dt}= IX_{\overline{\mu}(\nabla_t)}, \label{flow-moment}
\end{equation}
with $I$ the complex multiplication on the tangent vectors in $\mathcal{M}$. This equation can be rephrased in terms of flow over 1-forms by 
$$\frac{d F_{\nabla_t}}{dt}= L_{I{\overline{\mu}(f_t)}} F_{\nabla_t}$$
If we use the notations of the previous sections, $\omega_t=F_{\nabla_t}$ is an evolving K\"ahler form, then this (negative) gradient flow reads
as 
$$\frac{d \omega_t}{dt}=\i\partial\bar{\partial}\left(\frac{\omega_t^ n - \Omega}{\omega_t^ n}\right).$$
Then, using the fact that the kernel of the operator $\i\partial\bar{\partial}$ is given by constants (since $M$ is compact), one recovers precisely the equation of the $\Omega$-K\"ahler flow (\ref{MA2}).
Finally, we would like to mention that J. Fine has developed in the preprint \cite{Fi2} a more general theory that covers the results presented in this section (see \cite[Section 3.2]{Fi2}).
\subsection{Integral of a moment map}

In this section we deal with a very general setup. Consider the case of a K\"ahler manifold
 $(\Xi,\omega)$ polarized by the line bundle $\mathsf{L}$ and a moment map $\mu$ associated to the action of
a linear reductive group $\Gamma$ such that its complexified acts holomorphically.
To the moment map $\mu$  corresponds canonically a functional
\begin{equation*}
\Psi:\Xi \times \Gamma ^{\mathbb{C}}\rightarrow \mathbb{R}
\end{equation*}
that we call \lq\lq\textit{integral of the moment map} $\mu$\rq\rq \
and that satisfies the following two properties:

\begin{itemize}
\item For all $p\in \Xi,$ the critical points of the restriction $\Psi
_{p}$ of $\Psi $ to $\{p\}\times \Gamma ^{\mathbb{C}}$ 
coincide with the points of the orbit $Orb_{\Gamma ^{\mathbb{C}
}}(p)$ on which the moment map vanishes;
\item the restriction $\Psi _{p}$ on the \lq\lq lines\rq\rq \ $\{e^{\lambda u}:u\in\mathbb{R}
\}$ where $\lambda \in Lie\left( \Gamma ^{\mathbb{C}}\right) $ is convex.
\end{itemize}
This is well known in the projective case from the seminal work of G.  Kempf and L. Ness \cite{K-N}. We refer also to
\cite{MR} for a more general setting.
\begin{theorem-others}
There exists a unique map $\Psi:\Xi \times \Gamma ^{
\mathbb{C}}\rightarrow
\mathbb{R}
$ that satisfies:

\begin{enumerate}
\item $\Psi \left( p,e\right) =0$ for all $p\in \Xi;$
\item $\frac{d}{du}\Psi \left( p,e^{i\lambda u}\right) _{|u=0}=\langle\mu \left(
p\right) ,\lambda \rangle$ for all $\lambda \in Lie\left( \Gamma \right).$
\end{enumerate}
\end{theorem-others}

Let us sum up some of the main properties of the integral of the moment map.
\begin{proposition}
\noindent The functional $\Psi $ is $\Gamma-$invariant (for the left action) and satisfies the cocylicity relation
\begin{equation*} \Psi \left( p,\gamma \right) +\Psi \left(\gamma p,\gamma ^{\prime }\right) =\Psi \left( p,\gamma ^{\prime }\gamma \right)
\end{equation*}
 for all $p\in \Xi ,$ $\gamma ,\gamma' \in \Gamma ^{\mathbb{C}}$, and the relation of equivariance 
$$\Psi \left( \gamma p,\gamma ^{\prime }\right)
=\Psi \left( p,\gamma ^{-1}\gamma ^{\prime }\gamma \right)$$ 
for all $p\in
\Xi ,$ $\gamma \in \Gamma,\gamma' \in \Gamma ^{\mathbb{C}}.$ 

Moreover, $\frac{d^{2}}{du^{2}}\Psi \left( p,e^{i\lambda u}\right) \geq 0$
for all $\lambda \in Lie\left( \Gamma \right) $ with equality if and only if
the vector field ${X}_{\lambda }\left(
e^{i\lambda u}p\right) =0.$
\end{proposition}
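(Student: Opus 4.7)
The plan is to verify the four assertions in order, invoking the uniqueness part of the preceding Theorem to reduce the first three claims to formal checks of the two defining conditions, and carrying out a direct differentiation for the convexity. For the $\Gamma$-invariance, I would fix $\gamma \in \Gamma$ and verify that the function $(p,\gamma') \mapsto \Psi(\gamma p, \gamma\gamma'\gamma^{-1})$ satisfies both defining conditions of $\Psi$. The first is immediate, and the second reduces, via $e^{i(\mathrm{Ad}(\gamma)\lambda)u} = \gamma e^{i\lambda u}\gamma^{-1}$, to the identity $\langle \mu(\gamma p), \mathrm{Ad}(\gamma)\lambda\rangle = \langle \mu(p), \lambda\rangle$, which is precisely the $\mathrm{Ad}^*$-equivariance of $\mu$. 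Uniqueness then delivers the stated invariance.

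For the cocycle identity, I fix $\gamma \in \Gamma^{\mathbb{C}}$ and define
\[\Phi(p, \gamma') := \Psi(p, \gamma'\gamma) - \Psi(p, \gamma) - \Psi(\gamma p, \gamma').\]
Again by uniqueness, it suffices to check that $\Phi$ satisfies the two defining conditions in the variable $\gamma'$ (with $p$ fixed). The first is trivial, and the second requires the chain-rule formula
\[\frac{d}{du}\Psi(p, e^{i\lambda u}\gamma)\Big|_{u=0} = \langle \mu(\gamma p), \lambda\rangle,\]
which follows from reinterpreting $\Psi(p,\cdot)$ as the path-integral of a closed $1$-form $\alpha_p$ on $\Gamma^{\mathbb{C}}$ whose value at $\gamma$ in the direction of the right-invariant vector field generated by $i\lambda$ is $\langle \mu(\gamma p), \lambda\rangle$. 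With this identity, the two contributions to $\frac{d}{du}\Phi|_{u=0}$ cancel. The equivariance relation then follows by specializing the cocycle identity to $\gamma \in \Gamma$ and applying the $\Gamma$-invariance just established.

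The substantive computation is convexity. Differentiating condition (2) at a general point, and using that the infinitesimal action of $i\lambda \in i\,\mathrm{Lie}(\Gamma)$ on $\Xi$ is $JX_\lambda$ (where $X_\lambda$ is the generator of the $\Gamma$-action and $J$ is the complex structure on $\Xi$), I obtain
\[\frac{d^2}{du^2}\Psi(p, e^{i\lambda u}) = \frac{d}{du}\langle \mu(e^{i\lambda u}p),\lambda\rangle = d\langle\mu,\lambda\rangle\big|_{e^{i\lambda u}p}(JX_\lambda).\]
The moment map identity $d\langle \mu,\lambda\rangle = \iota_{X_\lambda}\omega$ together with the K\"ahler compatibility $\omega(\cdot, J\cdot) = g(\cdot, \cdot)$ then give
\[\frac{d^2}{du^2}\Psi(p, e^{i\lambda u}) = \omega(X_\lambda, JX_\lambda)_{e^{i\lambda u}p} = \|X_\lambda(e^{i\lambda u}p)\|_g^2 \geq 0,\]
with equality if and only if $X_\lambda$ vanishes at $e^{i\lambda u}p$, as required.

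The primary technical issue lies in the cocycle step: the chain-rule identity displayed there is not an immediate consequence of conditions (1) and (2) at $\gamma' = e$, and requires one to reinterpret $\Psi(p,\cdot)$ as the integral of a closed, $\mu$-related $1$-form on $\Gamma^{\mathbb{C}}$. Exhibiting this closed form and verifying path-independence is essentially the content of the existence half of the Theorem; once it is in place, each of the four assertions reduces to a short computation.
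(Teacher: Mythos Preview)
The paper does not actually supply a proof of this proposition: it is presented as a summary of well-known properties of the Kempf--Ness functional, with the references to \cite{K-N} and \cite{MR} given just before the statement. So there is no ``paper's own proof'' to compare against; your write-up is filling in what the authors leave to the literature.

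Your argument is the standard one and is essentially correct. The convexity computation is exactly right: differentiating the defining relation and using $d\langle\mu,\lambda\rangle=\iota_{X_\lambda}\omega$ together with $\omega(\cdot,J\cdot)=g(\cdot,\cdot)$ gives $\|X_\lambda\|_g^2$. For the cocycle, you correctly isolate the key point, namely the identity
\[
\frac{d}{du}\Psi(p,e^{i\lambda u}\gamma)\Big|_{u=0}=\langle\mu(\gamma p),\lambda\rangle,
\]
and you are right that this is not a formal consequence of conditions (1)--(2) alone but comes from the construction of $\Psi$ as the primitive of a closed $1$-form on $\Gamma^{\mathbb C}$ (equivalently, as a path integral). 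That is indeed the content of the existence half of the preceding theorem.

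One small organizational issue: what you verify under the heading ``$\Gamma$-invariance'' --- that $(p,\gamma')\mapsto\Psi(\gamma p,\gamma\gamma'\gamma^{-1})$ satisfies the defining conditions --- yields, after uniqueness, precisely the \emph{equivariance} relation $\Psi(\gamma p,\gamma')=\Psi(p,\gamma^{-1}\gamma'\gamma)$. The phrase ``$\Gamma$-invariant for the left action'' in the statement most naturally means $\Psi(p,\gamma\gamma')=\Psi(p,\gamma')$ for $\gamma\in\Gamma$, i.e.\ that $\Psi$ descends to $\Xi\times(\Gamma\backslash\Gamma^{\mathbb C})$; this is a separate (and easier) check. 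Your later sentence ``the equivariance relation then follows by specializing the cocycle identity \ldots\ and applying the $\Gamma$-invariance'' is therefore somewhat circular as written. This is only a labeling issue and does not affect the mathematical content.
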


Let us apply the previous results in our set-up. 
We introduce some classical functionals on the space of K\"ahler potentials. The energy functionals
$I$, $J$, introduced by T. Aubin in \cite{Aub} (see also \cite{Ti2}), are defined for each pair 
$(\omega,\omega_{\phi}:=\omega+\ddbar\phi)$ by
\begin{equation*}
I(\omega,\omega_{\phi})=\frac{1}{V}\int_M \sqrt{-1}\partial\phi\wedge\bar{\partial}\phi\wedge\sum_{i=0}^{n-1}\omega^{i}\wedge\omega_{\phi}^{n-1-i}
=\frac{1}{V}\int_M\phi(\omega^n-\omega_\phi^n)\label{Ieq},
\end{equation*}
\begin{equation*}
J(\omega,\omega_\phi)=\frac{1}{V(n+1)}\int_M\sqrt{-1}\partial{\phi}\wedge\bar{\partial}\phi\wedge\sum_{i=0}^{n-1}(n-i)\omega^{i}\wedge\omega_{\phi}^{n-1-i},\label{Jeq}
\end{equation*}
where we have skipped again the normalization of the volume form by the factor $n!$ for the sake of clearness. Note that one has the relationship
$$J(\omega,\omega_\phi)=\int_0^1 \frac{I(\omega, \omega+s\ddbar \phi)}{s}ds.$$ 
\par It is well known that $I,J$ and $I-J$ are all nonnegative and equivalent. One may also define these functionals via a variational formula and they are very natural from this point of view. We refer to the recent work of \cite{BBGZ} where this idea is exploited in details.
If $\omega_{\phi_t}$ is a smooth path in the K\"ahler cone, a direct computation gives $$\frac{d }{dt}J(\omega,\omega_{\phi_t}) = \frac{1}{V}\int_M \dot{\phi}_t (\omega^n - \omega_{\phi_t}^n).$$
We obtain
\begin{proposition}
 The integral of the moment map associated to $\overline{\mu}: \mathcal{M}\rightarrow Lie(\mathcal{G})^*$ is given by  the functional 
$$F^0_\Omega(\omega,\omega_\phi)=J(\omega,\omega_\phi)+\frac{1}{V}\int_M \phi(\Omega-\omega^n).$$
\end{proposition}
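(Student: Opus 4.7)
The plan is to apply the uniqueness theorem stated just above the proposition: any functional satisfying the normalization $\Psi(p,e)=0$ and the first-order identity $\frac{d}{du}\Psi(p,e^{i\lambda u})_{|u=0}=\langle\mu(p),\lambda\rangle$ must coincide with the integral of the moment map. I will therefore verify these two conditions for $F^0_\Omega$, viewing $\omega\in\mathcal{M}$ as the base point $p$ and identifying the $\mathcal{G}^{\mathbb{C}}$-orbit of $\omega$ with the space of K\"ahler forms $\{\omega+\ddbar\phi\}$ modulo $\mathcal{G}$.

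The normalization is immediate: when $\phi\equiv 0$ every term in $J(\omega,\omega)$ contains $\partial\phi\wedge\bar\partial\phi$ and the linear correction $\frac{1}{V}\int_M\phi(\Omega-\omega^n)$ is obviously zero, hence $F^0_\Omega(\omega,\omega)=0$. For the derivative condition I pick a smooth path of potentials $\phi_t$ in the orbit starting at $\phi_0=0$ with $\dot\phi_0=\lambda$, and combine the identity $\frac{d}{dt}J(\omega,\omega_{\phi_t})=\frac{1}{V}\int_M\dot\phi_t(\omega^n-\omega_{\phi_t}^n)$ recalled just above the proposition with the elementary $\frac{d}{dt}\frac{1}{V}\int_M\phi_t(\Omega-\omega^n)=\frac{1}{V}\int_M\dot\phi_t(\Omega-\omega^n)$. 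The cross $\omega^n$-term cancels and yields
$$\frac{d}{dt}F^0_\Omega(\omega,\omega_{\phi_t})=\frac{1}{V}\int_M\dot\phi_t\bigl(\Omega-\omega_{\phi_t}^n\bigr).$$
At $t=0$, since the complexified infinitesimal gauge action by $i\lambda$ produces precisely the K\"ahler-potential deformation in direction $\lambda$ (as one reads off from the computation leading to (\ref{flow-moment}) in Section \ref{sympl}), the right-hand side equals $\frac{1}{V}\int_M\lambda(\Omega-\omega^n)$, which up to the sign and normalization built into the pairing (\ref{pair}) is exactly $\langle\overline\mu(\omega),\lambda\rangle$. Invoking uniqueness on every $\mathcal{G}^{\mathbb{C}}$-orbit then gives $F^0_\Omega=\Psi$.

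The main obstacle I anticipate is the sign and factor bookkeeping. One must carefully check that the complex structure $I$ on $T\mathcal{M}$ (which converts the real gauge action into the ``imaginary'' one entering the uniqueness theorem), the factor $1/V$ hidden in the definition of $J$, and the orientation convention of the pairing (\ref{pair}) all conspire so that the displayed derivative of $F^0_\Omega$ matches $\langle\overline\mu,\lambda\rangle$ with the correct sign, and not its negative. A useful independent sanity check is that the cocycle property $\Psi(p,\gamma)+\Psi(\gamma p,\gamma')=\Psi(p,\gamma'\gamma)$ is well known to hold for $J$ (path-independence of the Aubin functional) and is manifest for the added linear term $\frac{1}{V}\int_M\phi(\Omega-\omega^n)$, so $F^0_\Omega$ automatically carries the structural features one expects of an integral of a moment map.
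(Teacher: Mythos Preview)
Your proposal is correct and is exactly the verification the paper intends: the paper gives no separate proof of this proposition, but immediately before it records the variational formula $\frac{d}{dt}J(\omega,\omega_{\phi_t})=\frac{1}{V}\int_M\dot\phi_t(\omega^n-\omega_{\phi_t}^n)$ and immediately after it writes down $\frac{d}{dt}F^0_\Omega(\omega,\omega_{\phi_t})=\int_M\dot\phi_t(\Omega-\omega_{\phi_t}^n)$, which together with the obvious normalization $F^0_\Omega(\omega,\omega)=0$ are precisely the two conditions in the uniqueness theorem quoted above the proposition. Your write-up simply makes this reasoning explicit, and your caveat about sign and $1/V$ bookkeeping is appropriate given the paper's somewhat loose conventions in Section~\ref{sympl}.
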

In particular, this functional is decreasing along the $\Omega$-K\"ahler flow. This can be checked directly since, along the flow (\ref{MA2}), one has 
$$\frac{d }{dt} F^0_\Omega(\omega,\omega_{\phi_t})=  \int_M \dot{\phi}_t (\Omega - \omega_{\phi_t}^n) = -\int_M \frac{ \left(\omega_{\phi_t}^n- \Omega\right)^2}{\omega_{\phi_t}^n}\leq 0.$$
On another hand, for the second derivative along the  $\Omega$-K\"ahler flow, one gets that
\begin{eqnarray}
\frac{d^2 }{dt^2} F^0_\Omega(\omega,\omega_{\phi_t})&=&\int_M \frac{\dot{\phi}_t\ddot{\phi}_t(-2+\dot{\phi}_t)}{(1-\dot{\phi}_t)^2}\Omega \label{secondd}
\end{eqnarray}
by using the fact that
$\frac{d \omega_{\phi_t}^n}{dt}=\frac{\ddot{\phi}_t}{(1-\dot{\phi}_t)^2}\Omega$. 
The term $-2+\dot{\phi}_t$ is always negative from Equation (\ref{MA2}), since the term $\frac{\Omega}{\omega_{\phi_t}^n}$ is positive. We can apply the maximum principle to check that the derivative of $(\dot{\phi}_t)^2$ is negative. So the right hand side of (\ref{secondd}) is actually positive
and the functional $t\mapsto F^0_\Omega(\omega,\omega_{\phi_t})$ is convex along the $\Omega$-K\"ahler flow.
Finally, it is clear that this functional satisfies the cocyclicity property 
$$F^0_\Omega(\omega,\omega_{\phi_1})=F^0_\Omega(\omega,\omega_{\phi_2})+F^0_\Omega(\omega_{\phi_2},\omega_{\phi_1}).$$
\subsection{The degenerate cases}\label{degen}
One can ask if the main results of this paper hold at least partially when one considers non ample classes or degenerate volume forms. Let us explain which arguments used in the previous sections can be extended with no major difficulty to show the existence of the balancing flow and its convergence when $k\rightarrow +\infty$ for non-smooth volume forms. \par First of all, a careful reading of the proof of Proposition \ref{tian-b} shows that the asymptotic expansion of the Bergman function holds when one considers a positive volume form $\Omega$ that can be written as $$\Omega=f_\Omega\omega^n$$ with $f_\Omega>0$ on $M$ and $f_\Omega\in L^1_\omega(M,\mathbb{R})$ and $\omega$ a smooth K\"ahler form.  Then, the asymptotic result for the operator $Q_k$  (Theorem \ref{quantlaplacien}) is valid when applied to the space of functions $f\in L^p_\omega(M,\mathbb{R})$ with $p>1$. This comes from the techniques of \cite{L-M} that can be extended from $L^2$ to $L^p$ topology, $p>1$. To be more precise, the regularity of the function $f$ is only needed in \cite[Equation (27)]{L-M} and the Cauchy-Schwartz inequality in \cite[Equation (28)]{L-M} can still be applied in the $L^p$ spaces. This implies that we get a more general version of Theorem \ref{identify} for $\Omega\in L^p(M)$ positive volume form ($p>1$) but with a weaker underlying convergence (the error terms are only controlled in $L^p$ norms instead of $C^ \infty$ norm for the sequence $\omega_k(t)$ and $\frac{\partial \omega_k(t)}{\partial t}$). Finally, when one considers non smooth forms $\Omega=f_\Omega\omega^n$ with $f_\Omega\in L^p_\omega(M), f_\Omega>0$ with $p>1$, the limit of the balancing flows is still the $\Omega$-K\"ahler flow (\ref{MA}). Note that we don't expect the potential of the involved metric in (\ref{MA}) to be smooth and we shall speak instead of weak $\Omega$-K\"ahler flow. We also remark that the balancing flow  will converge towards a balanced metric again. Actually a notion of balanced metric for $L^p$ volume forms (and even more general) has been studied in details in the recent work \cite[Section 7]{BBGZ}. Furthermore the technical results of Section \ref{proof} still hold. Thus, we are able to derive an analogue of  Theorem \ref{mainthm} as soon as we have an  existence and convergence result of the weak $\Omega$-K\"ahler flow in the infinite dimensional setup. 
\par Moreover we expect that this method can be generalized with no major difficulty to $L^p$ semi-positive forms, i.e. $f_\Omega\geq 0$. This will be studied in a forthcoming paper using the techniques developed by S. Kolodziej in his generalization of the Calabi problem \cite{Ko1}.
\medskip 
\par Finally, if one considers $L$ to be a big line bundle and if one assumes the existence of $\omega_0\in c_1(L)$ a semi-positive smooth closed real (1,1)-form, the results of R. Berman \cite[Theorems 1.1, 1.2]{Ber1} can be applied to derive a version of Proposition \ref{tian-b}. In that case, the equilibrium metric corresponds to $\omega_0$ and one has in the sense of measures the asymptotics expansion given by Equation (\ref{eqn12}). In this setup, we believe that what is happening on the space of K\"ahler potentials for the $\Omega$-K\"ahler flow is related to the very general version of the Calabi problem studied by the viscosity method {\it \`a la} P.L. Lions in \cite{EGZ,EGZ0}. In particular
we expect the following conjecture to hold.
\begin{conjecture}
 Assume that $L$ is a big line bundle, $\Omega> 0$ a volume form with $L^p$ density, $p>1$ such that $\int_M \omega_0^n = \int_M \Omega$. Then the $\Omega$-K\"ahler flow exists in a weak sense and converges towards a solution of the Calabi problem $$(\omega+\ddbar \phi_\infty)^n =\Omega$$
with $\phi_\infty\in C^0(M)$, $\phi_\infty\in C^{1,1}(K)$ for any compact subset $K\subset M\setminus \mathbb{B}_+(L)$ where $\mathbb{B}_+(L)$  is the augmented base locus of $L$, which is an analytic subvariety of $M$.
\end{conjecture}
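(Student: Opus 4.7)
The plan is to combine a two-fold regularization scheme with the a priori estimates already at our disposal in the smooth ample setting. First, using Kodaira's lemma, I would write $L = A_\eta + E_\eta$ (as $\mathbb{R}$-divisors) for $\eta>0$ small, where $A_\eta$ is an ample $\mathbb{Q}$-line bundle and $E_\eta$ is effective with $\operatorname{Supp}(E_\eta)\subset \mathbb{B}_+(L)$. Writing $[\omega_\eta] = [\omega_0] + \eta[\omega_A]$ with $\omega_A$ a K\"ahler form in $c_1(A)$, I obtain a family of K\"ahler classes $[\omega_\eta]$ approximating $[\omega_0]$ from above, on which the smooth flow machinery of Section \ref{Kahlerflow} is available. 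Simultaneously, regularize the density $f_\Omega = \Omega/\omega_0^n \in L^p$ by a sequence $f_j>0$ smooth with $f_j\to f_\Omega$ in $L^p$ and $\int_M f_j \omega_\eta^n = \int_M \omega_\eta^n$ after normalization, producing smooth volume forms $\Omega_{\eta,j}$. For each pair $(\eta,j)$, Theorem \ref{longtime} and Theorem \ref{conv} give a smooth solution $\phi_t^{\eta,j}$ of the $\Omega_{\eta,j}$-K\"ahler flow in $[\omega_\eta]$, converging to $\phi_\infty^{\eta,j}$ solving $(\omega_\eta + \ddbar\phi_\infty^{\eta,j})^n = \Omega_{\eta,j}$.

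The second step is to establish $(\eta,j,t)$-uniform estimates. The $C^0$ bound along the flow follows from the Ko\l odziej-type $L^\infty$ estimate: since $\frac{\partial \phi_t^{\eta,j}}{\partial t}$ is uniformly bounded by Proposition \ref{lemmac0estimate} (bounds depending only on $\sup f$ and $\inf f$, which must be replaced by an $L^p$-based argument), Equation (\ref{omega-flow}) together with Ko\l odziej's stability estimates \cite{Ko1} yields $\Vert v_t^{\eta,j}\Vert_\infty \leq C(p,\Vert f_\Omega\Vert_{L^p})$ uniformly. For the higher regularity on compact sets $K \subset M\setminus \mathbb{B}_+(L)$, I would use the approach of \cite{EGZ,EGZ0}: a local $C^{1,1}$ estimate via the Aubin-Yau Laplacian inequality as in Proposition \ref{c^2}, where the lower bound on the bisectional curvature is replaced by a uniform lower bound on a fixed K\"ahler metric dominated by $\omega_\eta$ on $K$, combined with Tsuji's trick using a quasi-plurisubharmonic weight $\psi$ with $\ddbar\psi \geq -C\omega_0$ and $\{\psi = -\infty\} = \mathbb{B}_+(L)$. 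Plugging $e^{-C\phi - A\psi}(n+\Delta\phi)$ into the maximum principle gives a local Laplacian bound independent of $(\eta,j)$.

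Passing to the limit $j\to \infty$ and then $\eta\to 0$, Ko\l odziej's stability yields $C^0$-convergence $\phi_\infty^{\eta,j}\to \phi_\infty$ on $M$, and the interior Evans-Krylov/Schauder theory (now applied locally on compact subsets of $M\setminus \mathbb{B}_+(L)$) upgrades this to $C^{1,1}_{\mathrm{loc}}$ convergence off the augmented base locus. The analyticity of $\mathbb{B}_+(L)$ for a big line bundle is classical (Nakamaye, Boucksom). For the flow itself, a diagonal extraction combined with the monotonicity of the energy functional $F^0_\Omega$ along the flow (which passes to the limit by Fatou-type arguments on $L^p$ densities) should provide a weak solution $\phi_t$ such that $\phi_t \to \phi_\infty$; the notion of weak solution would be that of \cite{EGZ0}, i.e.\ viscosity plus pluripotential on the ample locus.

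The main obstacle will be the a priori Laplacian/$C^{1,1}$ estimate uniform up to and across the approximations, because the standard Aubin-Yau computation fundamentally requires $\omega_\eta$ to be strictly positive and the density to be bounded away from zero and infinity; here both degenerate simultaneously. The delicate point is to balance the Tsuji weight absorbing the degeneration of $\omega_\eta$ near $\mathbb{B}_+(L)$ against the degeneration $f_\Omega \in L^p\setminus L^\infty$, which seems to require either an Aleksandrov-Bakelman-Pucci input as in \cite{EGZ} or a delicate choice of barrier adapted to the $L^p$-norm. A second obstacle is proving genuine convergence of the flow (not merely subsequential), which in the smooth case uses the Li-Yau Harnack estimate (Proposition \ref{LY}); adapting this to metrics $\gamma_{ij}(t)$ that degenerate near $\mathbb{B}_+(L)$ appears nontrivial and may require restricting the Harnack argument to compact subsets of the ample locus and handling the complement by potential-theoretic capacity arguments.
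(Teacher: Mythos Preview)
The statement you are attempting to prove is explicitly labeled a \emph{Conjecture} in the paper, and the paper offers no proof of it. The surrounding discussion in Section~\ref{degen} only motivates the conjecture by pointing to the tools that might be relevant (Berman's equilibrium asymptotics for big line bundles, Ko\l odziej's $L^p$ Calabi theory, and the viscosity approach of Eyssidieux--Guedj--Zeriahi), and then states that ``we expect the following conjecture to hold.'' There is therefore no paper proof to compare your proposal against.

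Your write-up is a plausible research outline along precisely the lines the authors hint at, but it is not a proof, and you yourself say so in the last paragraph. The two obstacles you flag are genuine and are exactly why this is stated as a conjecture rather than a theorem. First, the parabolic Aubin--Yau Laplacian estimate of Proposition~\ref{c^2} uses in an essential way a two-sided $C^0$ bound on $\partial\phi_t/\partial t$, which in Proposition~\ref{lemmac0estimate} comes from $\sup_M f$ and $\inf_M f$; with $f_\Omega\in L^p\setminus L^\infty$ this bound is simply unavailable, and replacing it by an ``$L^p$-based argument'' is not a step one can wave through---it is the heart of the problem. Tsuji's trick handles the degeneration of the reference form near $\mathbb{B}_+(L)$, but it does not by itself absorb an unbounded right-hand side in the parabolic Monge--Amp\`ere equation. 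Second, the convergence (not just subsequential) of the flow in the smooth case rests on the Li--Yau Harnack inequality (Proposition~\ref{LY}) applied to a conformally rescaled metric that is uniformly equivalent to a fixed one; in the degenerate setting this uniform equivalence fails near $\mathbb{B}_+(L)$, and restricting the Harnack argument to compact subsets of the ample locus does not obviously control the global oscillation of $\partial\phi_t/\partial t$. Until these two points are resolved, what you have is a strategy, not a proof---which is consistent with the paper's own assessment.
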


\bibliography{balflow2b.bib}

\begin{thebibliography}{BBGZ09}

\bibitem[Aub84]{Aub}
Thierry Aubin.
\newblock R\'eduction du cas positif de l'\'equation de {M}onge-{A}mp\`ere sur
  les vari\'et\'es k\"ahl\'eriennes compactes \`a la d\'emonstration d'une
  in\'egalit\'e.
\newblock {\em J. Funct. Anal.}, 57(2):143--153, 1984.

\bibitem[Bak11]{Baker}
Charles Baker.
\newblock The mean curvature flow of submanifolds of high codimension.
\newblock {\em ArXiv}, 1104.4409v1, 2011.

\bibitem[BBGZ09]{BBGZ}
Robert Berman, S\'ebastien Boucksom, Vincent Guedj, and Ahmed Zeriahi.
\newblock A variational approach to complex {M}onge-{A}mp\`ere equations.
\newblock {\em ArXiv}, 0907.4490, 2009.

\bibitem[Ber09]{Ber1}
Robert~J. Berman.
\newblock Bergman kernels and equilibrium measures for line bundles over
  projective manifolds.
\newblock {\em Amer. J. Math.}, 131(5):1485--1524, 2009.

\bibitem[Bou90]{Bou}
Thierry Bouche.
\newblock Convergence de la m\'etrique de {F}ubini-{S}tudy d'un fibr\'e
  lin\'eaire positif.
\newblock {\em Ann. Inst. Fourier (Grenoble)}, 40(1):117--130, 1990.

\bibitem[Cao85]{Cao}
Huai~Dong Cao.
\newblock Deformation of {K}\"ahler metrics to {K}\"ahler-{E}instein metrics on
  compact {K}\"ahler manifolds.
\newblock {\em Invent. Math.}, 81(2):359--372, 1985.

\bibitem[Cat99]{Ca}
David Catlin.
\newblock The {B}ergman kernel and a theorem of {T}ian.
\newblock In {\em Analysis and geom. in several complex var. (Katata, 1997)},
  Trends Math. Birkh\"auser, 1999.

\bibitem[Don01]{D1}
S.~K. Donaldson.
\newblock Scalar curvature and projective embeddings. {I}.
\newblock {\em J. Differential Geom.}, 59(3):479--522, 2001.

\bibitem[Don09]{D3}
S.~K. Donaldson.
\newblock Some numerical results in complex differential geometry.
\newblock {\em Pure Appl. Math. Q.}, 5(2, Special Issue: In honor of Friedrich
  Hirzebruch. Part 1):571--618, 2009.

\bibitem[EGZ09a]{EGZ0}
Philippe Eyssidieux, Vincent Guedj, and Ahmed Zeriahi.
\newblock Singular {K}\"ahler-{E}instein metrics.
\newblock {\em J. Amer. Math. Soc.}, 22(3):607--639, 2009.

\bibitem[EGZ09b]{EGZ}
Philippe Eyssidieux, Vincent Guedj, and Ahmed Zeriahi.
\newblock Viscosity solutions to complex {M}onge-{A}mp\`ere equations.
\newblock {\em to appear in Communications on Pure and Applied Mathematics,
  ArXiv}, 1007.0076, 2009.

\bibitem[Fin10]{Fi1}
Joel Fine.
\newblock Calabi flow and projective embeddings.
\newblock {\em J. Differential Geom.}, 84(3):489--523, 2010.
\newblock With an appendix by Kefeng Liu and Xiaonan Ma.

\bibitem[Fin11]{Fi2}
Joel Fine.
\newblock The hamiltonian geometry of the space of unitary connections with
  symplectic curvature.
\newblock {\em ArXiv}, 1101.2420v1, 2011.

\bibitem[FLM09]{FLM}
Hao Fang, Mijia Lai, and Xinan Ma.
\newblock On a class of fully nonlinear flows in {K}\"ahler geometry.
\newblock {\em ArXiv}, 0904.3288, to appear in J. Reine Angew. Math. (Crelle's
  Journal), published online, 2009.

\bibitem[GT01]{GT}
David Gilbarg and Neil~S. Trudinger.
\newblock {\em Elliptic partial differential equations of second order}.
\newblock Classics in Mathematics. Springer-Verlag, Berlin, 2001.
\newblock Reprint of the 1998 edition.

\bibitem[Kel09]{Ke1}
Julien Keller.
\newblock Ricci iterations on {K}\"ahler classes.
\newblock {\em J. Inst. Math. Jussieu}, 8(4):743--768, 2009.

\bibitem[KN79]{K-N}
George Kempf and Linda Ness.
\newblock The length of vectors in representation spaces.
\newblock In {\em Algebraic geometry ({P}roc. {S}ummer {M}eeting, {U}niv.
  {C}openhagen, {C}openhagen, 1978)}, volume 732 of {\em Lecture Notes in
  Math.}, pages 233--243. Springer, Berlin, 1979.

\bibitem[Kol98]{Ko1}
Slawomir Kolodziej.
\newblock The complex {M}onge-{A}mp\`ere equation.
\newblock {\em Acta Math.}, 180:69--117, 1998.

\bibitem[LM07]{L-M}
Kefeng Liu and Xiaonan Ma.
\newblock A remark on: ``{S}ome numerical results in complex differential
  geometry'' [arxiv.org/abs/math/0512625] by {S}. {K}. {D}onaldson.
\newblock {\em Math. Res. Lett.}, 14(2):165--171, 2007.

\bibitem[Lu00]{Lu}
Zhiqin Lu.
\newblock On the lower order terms of the asymptotic expansion of
  {T}ian-{Y}au-{Z}elditch.
\newblock {\em Amer. J. Math.}, 122(2):235--273, 2000.

\bibitem[LY86]{L-Y}
Peter Li and Shing-Tung Yau.
\newblock On the parabolic kernel of the {S}chr\"odinger operator.
\newblock {\em Acta Math.}, 156(3-4):153--201, 1986.

\bibitem[MiR00]{MR}
Ignasi Mundet~i Riera.
\newblock A {H}itchin-{K}obayashi correspondence for {K}\"ahler fibrations.
\newblock {\em J. Reine Angew. Math.}, 528:41--80, 2000.

\bibitem[MM07]{M-M}
Xiaonan Ma and George Marinescu.
\newblock {\em Holomorphic {M}orse inequalities and {B}ergman kernels}, volume
  254 of {\em Progress in Mathematics}.
\newblock Birkh\"auser Verlag, Basel, 2007.

\bibitem[MM10]{M-M2}
Xiaonan Ma and George Marinescu.
\newblock Berezin-{T}oeplitz quantization on {K}aehler manifolds.
\newblock {\em ArXiv}, 1009.4405, 2010.

\bibitem[Rua98]{Ru}
Wei-Dong Ruan.
\newblock Canonical coordinates and {B}ergmann [{B}ergman] metrics.
\newblock {\em Comm. Anal. Geom.}, 6(3):589--631, 1998.

\bibitem[Tia90]{Ti1}
Gang Tian.
\newblock On a set of polarized {K}\"ahler metrics on algebraic manifolds.
\newblock {\em J. Differential Geom.}, 32(1):99--130, 1990.

\bibitem[Tia00]{Ti2}
Gang Tian.
\newblock {\em Canonical metrics in {K}\"ahler geometry}.
\newblock Lectures in Mathematics ETH Z\"urich. Birkh\"auser Verlag, Basel,
  2000.
\newblock Notes taken by Meike Akveld.

\bibitem[Yau78]{Yau}
Shing~Tung Yau.
\newblock On the {R}icci curvature of a compact {K}\"ahler manifold and the
  complex {M}onge-{A}mp\`ere equation. {I}.
\newblock {\em Comm. Pure Appl. Math.}, 31(3):339--411, 1978.

\bibitem[Yau86]{Yau2}
Shing-Tung Yau.
\newblock Nonlinear analysis in geometry.
\newblock {\em Enseign. Math. S\'erie des Conf\'erences de l'Union
  Math\'ematique Internationale, 8}, 33(1-2):109--158, 1986.

\bibitem[Zel98]{Ze}
Steve Zelditch.
\newblock Asymptotics of holomorphic sections of powers of a positive line
  bundle.
\newblock In {\em S\'eminaire sur les \'Equations aux D\'eriv\'ees Partielles,
  1997--1998}, pages Exp. No. XXII, 12. \'Ecole Polytech., Palaiseau, 1998.

\end{thebibliography}

\bigskip 
\noindent
\texttt{H. -D. Cao}\\
{\sl Department of Mathematics, Lehigh University, \\ Bethlehem, PA 18015 USA\\
Email address: \url{huc2@lehigh.edu}}
\bigskip

\noindent
\texttt{J. Keller}\\
{\sl Centre de Mathématiques et Informatique, Université de Provence, \\
Technopôle Ch\^ateau-Gombert, 39, rue F. Joliot Curie,  \\
13453 Marseille Cedex 13, FRANCE\\
Email address: \url{jkeller@cmi.univ-mrs.fr}}

\end{document}